\date{today}
\theoremstyle{plain}
\newtheorem{thm}{Theorem}[section]
\newtheorem{prop}[thm]{Proposition}
\newtheorem{lemma}[thm]{Lemma}
\newtheorem{cor}[thm]{Corollary}
\theoremstyle{definition}
\newtheorem{definition}[thm]{Definition}
\theoremstyle{remark}
\newtheorem{remark}[thm]{Remark}
\newcommand{\norm}[1]{\left\lVert#1\right\rVert}
\renewcommand{\a} \alpha
\renewcommand{\b} \beta
\newcommand{\Map}{\mathrm{Map}}
\newcommand{\su}{\mathfrak{su}}
\newcommand{\G}{\mathrm{G}}
\renewcommand{\S}{\Sigma}
\newcommand{\frg}{\mathfrak{g}}
\newcommand{\cA}{\mathcal{A}}
\newcommand{\cB}{\mathcal{B}}
\newcommand{\cC}{\mathcal{C}}
\newcommand{\cF}{\mathcal{F}}
\newcommand{\cG}{\mathcal{G}}
\newcommand{\cM}{\mathcal{M}}
\newcommand{\cI}{\mathcal{I}}
\newcommand{\cN}{\mathcal{N}}
\newcommand{\cL}{\mathcal{L}}
\newcommand{\cO}{\mathcal{O}}
\newcommand{\cP}{\mathcal{P}}
\newcommand{\cS}{\mathcal{S}}
\newcommand{\cU}{\mathcal{U}}
\newcommand{\cW}{\mathcal{W}}
\newcommand{\RR}{\mathbb{R}}
\newcommand{\CC}{\mathbb{C}}
\newcommand{\HH}{\mathbb{H}}
\newcommand{\PP}{\mathbb{P}}
\newcommand{\ZZ}{\mathbb{Z}}
\newcommand{\too}{\longrightarrow}
\newcommand{\x}{\times}
\newcommand{\ox}{\otimes}
\newcommand{\la}{\langle}
\newcommand{\ra}{\rangle}
\newcommand{\frM}{\mathfrak{M}}
\newcommand{\frh}{\mathfrak{h}}
\newcommand{\frs}{\mathfrak{s}}
\newcommand{\fru}{\mathfrak{u}}
\newcommand{\Id}{\mathrm{Id}}
\newcommand{\id}{\mathrm{Id}}
\renewcommand{\Re}{\mathrm{Re}}
\newcommand{\even}{\textit{even}}
\newcommand{\prim}{\textit{prim}}
\DeclareMathOperator{\rk}{rk}
\DeclareMathOperator{\Ad}{Ad}
\DeclareMathOperator{\im}{im}
\DeclareMathOperator{\img}{im}
\DeclareMathOperator{\coker}{coker}
\DeclareMathOperator{\Hom}{Hom}
\DeclareMathOperator{\Sym}{Sym}
\DeclareMathOperator{\End}{End} 
\DeclareMathOperator{\GL}{GL}
\DeclareMathOperator{\SO}{SO}
\DeclareMathOperator{\SU}{SU}
\DeclareMathOperator{\Spin}{Spin}
\DeclareMathOperator{\U}{U}
\DeclareMathOperator{\SL}{SL}
\DeclareMathOperator{\Cl}{Cl}
\DeclareMathOperator{\hol}{hol}
\DeclareMathOperator{\tr}{tr}
\renewcommand{\index}{\mathrm{index}\,}
\begin{document}

\title{Transversality for the moduli space of $\Spin(7)$-instantons}

\author[V. Mu\~{n}oz]{Vicente Mu\~{n}oz}
\address{Facultad de Ciencias Matem\'aticas, Universidad
Complutense de Madrid, Plaza de Ciencias 3, 28040 Madrid, Spain}
\email{vicente.munoz@mat.ucm.es}

\author[C. S. Shahbazi]{C. S. Shahbazi}
\address{Institut f\"ur Theoretische Physik, Leibniz Universit\"at Hannover, Germany}
\email{carlos.shahbazi@itp.uni-hannover.de}

\thanks{2010 MSC. Primary:  53C38. Secondary: 53C07, 53C25, 58D27.}
\keywords{$\Spin(7)$-instanton, moduli space, $\Spin(7)$-structure}
\thanks{Preprint number: ITP-UH-23/16} 

\begin{abstract}
We construct the moduli space of $\Spin(7)$-instantons on a hermitian complex vector bundle over a closed $8$-dimensional manifold endowed with a (possibly non-integrable) $\Spin(7)$-structure. We find suitable perturbations that achieve regularity of the moduli space, so that it is smooth and of the expected dimension over the irreducible locus. 
\end{abstract}

\maketitle


\section{Introduction}

Gauge theory in dimensions two, three and four is by now a classical research area in geometry and topology, which has been extensively studied and developed in the literature since the seventies, with formidable results in differential topology and related areas, see for example the book \cite{DKbook} and references therein. 

Higher-dimensional gauge theory on the other hand is a much recent proposal appearing in the influential work of Donaldson 
and  Thomas \cite{DT1}, and suggests studying a higher-dimensional version of the four-dimensional instanton equations which 
exists in the presence of the appropriate geometric structure. Higher-dimensional instantons had in fact already appeared 
\emph{in disguise} in the physics literature as early as in the eighties \cite{CDFN,Ward}, although they were not systematically 
studied in the mathematical literature at the time. Recently, higher-dimensional gauge theory has experienced an increase in 
activity fueled both from pure mathematics, motivated by the program proposed by \cite{DT1,TianII,DonaldsonSegal}, as 
well as from string theory and in particular the Strominger system, see for example \cite{Harland,Ivanov:2009rh,Haupt,delaOssa:2016ivz,Clarke:2016qtg}. 
Early works in the topic include \cite{Carrion,Lewis,Figueroa}. 

Most of the literature in higher-dimensional gauge theory, and in particular the one considering instantons in eight dimensions, 
focuses on manifolds of special holonomy. For example, Lewis \cite{Lewis} constructs
$\Spin(7)$-instantons on a resolution of $T^{8}/ F$, where $F$ is a finite subgroup acting on the $8$-torus
of $T^{8}$, which has a $\Spin(7)$-holonomy structure by the results of Joyce \cite{JoyceI}. On the other hand, 
Tanaka \cite{Tanaka} constructs examples of $\Spin(7)$-instantons on the resolution of an appropriate Calabi-Yau 
four-orbifold quotiented by $\mathbb{Z}_{2}$, which is a $\Spin(7)$-holonomy manifold again by results of Joyce \cite{JoyceII}. Walpuski \cite{Walpuski}
proves an existence theorem for a particular type of $\Spin(7)$-instantons on $\Spin(7)$-holonomy manifolds 
admitting appropriate local $K3$ Cayley fibrations. 

Here instead, for reasons that will become apparent in a moment, we focus on $8$-dimensional 
manifolds equipped with a generically non-integrable $\Spin(7)$-structure. More concretely, in this article we initiate the 
construction of the moduli space of $\Spin(7)$-instantons on a hermitian complex vector bundle $E$ over a closed $8$-manifold 
$M$ equipped with a not necessarily integrable $\Spin(7)$-structure, focusing on studying the transversality properties of the 
moduli space. The existence of a $\Spin(7)$-structure on an $8$-manifold is equivalent to the existence of a $4$-form $\Omega$ 
point-wise satisfying a particular algebraic condition, but not involving any differential equation. In turn, $\Omega$ determines a 
Riemannian metric in a highly non-linear way, which is of $\Spin(7)$-holonomy if and only if $\Omega$ is closed, in which case it 
defines a calibration \cite{HarveyL}. In terms of $\Omega$, the $\Spin(7)$-instanton equation for a connection $A$ in $E$ is given by
  $$
  \ast F_{A} = -F_{A}\wedge\Omega\, ,
  $$
where $F_{A}$ is the curvature associated to $A$. We will refer to solutions of this equation as $\Spin(7)$-instantons. 
We are interested in studying the space of instantons modulo gauge transformations (automorphisms of the bundle). 
The $\Spin(7)$ instanton condition 
modulo gauge transformations is elliptic regardless the integrability properties of the underlying $\Spin(7)$-structure. 
This is in clear contrast with the situation one encounters in $7$-dimensions, and makes very natural working with 
arbitrary $\Spin(7)$-structures. In fact, the situation in $8$-dimensions regarding $\Spin(7)$-instantons is similar 
in various aspects to that in $4$-dimensions: for instance the deformation complex contains three terms
\cite{Carrion}, as it happens in $4$-dimensions, being $8$-dimensions the only case where this coincidence happens. 

Aside from what we have already said, the motivations to consider $\Spin(7)$-instantons over closed $8$-manifolds 
equipped with a not necessarily integrable $\Spin(7)$-structure are many. A practical reason comes from the existence 
of examples: explicit instances of closed $8$-manifolds of $\Spin(7)$-holonomy are scarce \cite{Joyce2007}. Relaxing 
the integrability condition we are able to enlarge the available examples. In fact, we show that every Quaternionic-K\"ahler 
closed $8$-manifold carries a necessarily non-integrable (and possibly unrelated) $\Spin(7)$-structure. This provides 
explicit examples of $\Spin(7)$-manifolds, such as $G_{2}/SO(4)$ and $\mathbb{H}{P}^{2}$. 

As explained in \cite{Haydys}, $\Spin(7)$-instantons can become a useful tool to learn about the topology of $4$-manifolds,
if we are willing to take for granted a construction assigning to every $4$-manifold $X$ a $\Spin(7)$-manifold $M_{X}$. 
For example, $M_{X}$ can be taken to be the total spinor bundle over $X$, which admits a $\Spin(7)$-structure which is 
generically non-integrable (it is integrable for example when $X=S^{4}$ \cite{BryantSalamon}). By counting then 
$\Spin(7)$-instantons on $M_{X}$, one could in principle obtain an invariant for $X$.

Let us consider now the case that $M$ is equipped with a Calabi-Yau structure $(\omega,\theta)$ and an $\SU(r)$ vector bundle $E$. 
Assuming $c_{2}(E)\in H^{2,2}(M)$, a $\Spin(7)$-instanton on $E$ is equivalent to a polystable structure on $E$, a fact 
that was already noticed in \cite{Lewis} and that is used by Tian \cite{Tian} to propose a way to attack the Hodge 
Conjecture by proving existence of $\Spin(7)$-instantons. This idea was explicitly considered and developed in reference 
\cite{Ramadas}, where Ramadas attempted to construct, without apparent success, $\Spin(7)$-instantons on some 
abelian varieties of Weil type for which Hodge Conjecture is yet to be settled. On the other hand, the first author
\cite{Munoz-JMPA,MunozII} studied, motivated by the previous proposal, under which conditions 
the existence of a polystable holomorphic structure on $E$ for $(M,\omega,\theta)$ implies the existence of a polystable 
holomorphic structure on $E$ for $(M,\hat{\omega},\hat{\theta})$, where $(\hat{\omega},\hat{\theta})$ is an appropriately
defined $\Spin(7)$-\emph{rotation} of $(\omega,\theta)$.

Last but not least, there is a strong motivation coming from string theory to consider $\Spin(7)$-manifolds equipped with a 
non-integrable $\Spin(7)$-structure. The Strominger system \cite{Strominger} is a system of PDE's on a Riemannian manifold 
that encodes the conditions for it to be an admissible supersymmetric compactification background of Heterotic supergravity. 
When formulated in $8$-dimensions, it involves a conformally balanced $\Spin(7)$-structure coupled to a $\Spin(7)$-instanton 
and a function \cite{Fernandez:2008wla,Friedrich}, and thus requires considering $\Spin(7)$-instantons with respect to 
generically non-integrable $\Spin(7)$-structures.

It should be noted\footnote{We thank Professor Dominic Joyce for explaining to us this important point.} that using generically non-integrable $\Spin(7)$-structures has some drawbacks regarding the development of Donaldson's theory in eight dimensions. More concretely, in a $\Spin(7)$-holonomy manifold there is a topological bound in the $L^2$-norm of the curvature of any $\Spin(7)$-instanton, which is an extra reason to hope that the corresponding $\Spin(7)$-instanton moduli space might admit a good compactification. However, for generic $\Spin(7)$-manifolds this is not longer true, and it is certainly possible that the $L^{2}$-norm of the curvature goes to infinity as we move in the moduli space of $\Spin(7)$-instantons, leading to a \emph{stronger} non-compactness which may indicate that such moduli space cannot be compactified and thus used to count invariants. This could be remedied by considering instead, as proposed in references \cite{DonaldsonSegal,JoyceIII}, a closed taming $\Spin(7)$-form. In this situation one again encounters a topological bound for the $L^{2}$-norm of the curvature and then one can expect a moduli space admitting a nice compactification.

\subsection*{Main results}

The main goal of this work is to study transversality for the moduli space of $\Spin(7)$-instantons. We develop the theory 
\emph{from scratch}. Section \ref{sec:Spin(7)rep} contains the necessary background on $\Spin(7)$-representations as 
well as some results on the space of $\Spin(7)$-structures on an $8$-dimensional vector space. Section \ref{sec:Spin(7)-mfds} 
contains the necessary background on $\Spin(7)$ manifolds. We prove that every closed $8$-manifold admitting an 
almost-Quaternionic structure structure admits a (possibly unrelated) $\Spin(7)$-structure. Furthermore, we give a formula for the 
Dirac operator associated to the Ivanov connection which seems to be new in the literature. Section \ref{sec:connections} 
contains a detailed analysis of some of the topological and analytic properties of the space of connections modulo gauge 
transformations. Little changes here from the situation in $4$-dimensions and in fact we follow classical references on this topic. 
However, we have chosen to include explicitly all the pertinent results and proofs, mainly because some of them are not 
explicitly proven in the literature but also in order to give a systematic and complete exposition which can serve as the foundations 
for a theory of deformations of $\Spin(7)$-instantons. This section culminates with theorem \ref{thm:localorbispace}, 
which among other things proves that the space of connections modulo gauge transformations is a topological Hausdorff space and 
gives a local description. Section \ref{sec:moduli-local} studies the local structure of the moduli space of $\Spin(7)$-instantons. 
The main result of this section is theorem \ref{thm:main1} which gives the local model for the moduli space in terms of 
the hypercohomology groups of the appropriate deformation complex. Section \ref{sec:transversality} addresses the 
transversality properties of the moduli space of $\Spin(7)$-instantons on a complex hermitian vector bundle of any rank, considering 
first the rank-two case. In particular, theorem \ref{thm:projectorpert} proves that in the rank-two case, for a dense 
family of projector perturbations the corresponding moduli spaces are smooth at irreducible connections and of the expected dimension. 
In the higher-rank case, theorem \ref{thm:holonomypert} proves that for a dense family of holonomy perturbations, 
the perturbed moduli spaces are again smooth at irreducible connections and of the expected dimension. 
Section \ref{sec:instantonline} considers explicitly the moduli space of $\Spin(7)$-instantons on a principal $\mathrm{U}(1)$-bundle. 
The main result of this section is theorem \ref{thm:U(1)moduli}, which characterizes the moduli space of $\mathrm{U}(1)$-instantons for a 
generic $\Spin(7)$-structure under a relatively mild assumption on its torsion.

\subsection*{Acknowledgements} We would like to thank S. Donaldson, N. Hitchin, D. Joyce, O. Lechtenfeld, J.J. Madrigal,  R. Thomas, for useful conversations. 
First author was partially supported through Project MICINN (Spain) MTM2015-63612-P. The second author was partially supported by the German Science Foundation (DFG) Project LE838/13.

\section{Representation theory of the group $\Spin(7)$} \label{sec:Spin(7)rep}

 On $\RR^{8}$, with coordinates $(x_1,\ldots, x_8)$, we consider the $4$-form:
\begin{align} \label{eq:Omega0}
\Omega_{0}  =& \, dx_{1234} - dx_{1278} - dx_{1638} - dx_{1674} + dx_{1526} + dx_{1537} + dx_{1548} \nonumber\\ 
&+ dx_{5678} - dx_{5634} - dx_{5274} - dx_{5238} + dx_{3748} + dx_{2648} + dx_{2637}\, ,
\end{align}
where $dx_{abcd}$, $a, b, c, d =1,\hdots, 8$, stands for $dx_{a}\wedge dx_{b}\wedge dx_{c}\wedge dx_{d}$. The
subgroup of $\GL(8,\RR)$ that fixes $\Omega_{0}$ is isomorphic to $\Spin(7)$, which is a simply-connected, compact, 
Lie group of dimension $21$, abstractly isomorphic to the double cover of $\SO(7)$. This group also preserves the 
standard orientation of $\RR^{8}$ and the euclidean metric, hence  $\Spin(7)\subset \SO(8)$. Also, it is easy to see
that $\Omega_{0} = \ast \Omega_{0}$, where $\ast$ is the Hodge dual. 

Consider an oriented $8$-dimensional vector space $V$. A $\Spin(7)$-form is a $4$-form $\Omega$ that can be written as $\Omega_0$ in suitable coordinates, i.e., there exists an orientation-preserving isomorphism $f : V \to\RR^{8}$ such that $\Omega = f^{\ast} \Omega_{0}$. The space $\cS$ of $\Spin(7)$-forms is thus a $43$-dimensional homogeneous subspace of $\Lambda^4$:
$$
\cS \cong \GL^+(8,\RR) /\Spin(7)\, .
$$
The space $\cS_{\nu}$ of $\Spin(7)$-forms compatible with a given volume form $\nu$ is diffeomorphic to the homogeneous space $\SL(8,\RR)/\Spin(7)$, whereas the space $\cS_g$ of $\Spin(7)$-forms compatible with a given Riemannian structure $g$ is diffeomorphic to the homogeneous space $\SO(8)/\Spin(7)$, which is of dimension $7$. If on the other hand we only fix the conformal structure $c=[g]$, the corresponding space of compatible $\Spin(7)$-forms is $\cS_c \cong (\RR_+\cdot \SO(8))/\Spin(7)$.

There is a different characterization of $\Spin(7)$ as the stabilizer of an element in one of the three irreducible representations of $\Spin(8)$ on an eight-dimensional vector space $V$. For this, fix an orientation and a metric for $V$. Consider the Clifford algebra $\Cl(8)$ associated to it, and recall that the group $\Spin(8)$ satisfies $\Spin(8)\subset \Cl^{\even}(8)\subset \Cl(8)$. 
The $16$-dimensional irreducible representation $S$ of $\Cl(8)$ admits a unique 
 bilinear $\la $-$ ,$-$ \ra \colon S\times S\to \mathbb{R}$ for which Clifford multiplication is orthogonal.
It splits into two $8$-dimensional irreducible inequivalent representations $S=S^+ \oplus S^-$ of $\Cl^{\even}(8)$. This produces two inequivalent representations $\gamma^\pm: \Spin(8) \to \SO(S^\pm)$ of $\Spin(8)$. In addition, there is a third eight-dimensional representation of $\Spin(8)$ given by its adjoint action\footnote{Meanning the adjoint action of $\Spin(8)$ on $V\subset\Cl(8)$ via the Clifford algebra product. Not to be confused with the usual adjoint representation of a Lie group.} on $V\hookrightarrow \Cl(8)$, which we denote by $\mathfrak{D}: \Spin(8) \to \SO(V)$. The triality automorphism is an outer automorphism of $\Spin(8)$ that permutes these three representations. All of the three representations are representations of the (universal) double cover of $\SO(8)$. Clifford multiplication constitutes a $\Spin(8)$-equivariant map $c\colon V \ox S^+ \to S^-$. 

The group $\Spin(7)$ can be now defined as the stabilizer of a unit-norm element $\eta$ in either of $S^+$, $S^-$ or $V$. This gives three conjugacy classes of subgroups $\Spin(7)$ inside $\Spin(8)$, which are cyclically permuted by the triality automorphism of $\Spin(8)$. The one that agrees with the previous definition is given by fixing an element $\eta\in S^7\subset S^+$.

In the above notation, the standard representation $V=\RR^8$ of $\SO(8)$ induces an $8$-dimensional representation
under the inclusion $\Spin(7)<\Spin(8)$ followed by the adjoint
representation $\Spin(8) \to \SO(8)$. 
The positive spin representation of $\Spin(8)$ splits in $\Spin(7)$-representations as:  
$$ 
S^+=\la\eta\ra \oplus H \, ,
$$
where $\la\eta\ra$ denotes the one-dimensional trivial representation of $\Spin(7)$ and $H$ denotes the seven-dimensional representation isomorphic to the standard representation of $\SO(7)$, induced by the adjoint representation of the double cover $\Spin(7)\to \SO(7)$. The $\Spin(7)$-equivariant map:
\begin{eqnarray*}
c\colon V \ox S^+ &\to& S^-\, ,\\
v\otimes\eta &\mapsto& v\cdot \eta\, ,
\end{eqnarray*}
gives an isomorphism $V \to S^-$, via Clifford multiplication by $\eta$. 

The $\Spin(7)$-representations on the exterior powers of $V$ are as follows. Denote by $\Lambda^{i} = \Lambda^{i} V$, $ i =1,\hdots, 8$. We have the following decompositions of the representation $\Lambda^{i}$ into irreducible $\Spin(7)$ factors \cite{Munoz-JMPA}:
  \begin{align*} 
  \Lambda^{1} &= \Lambda^{1}_{8}\, , \\ 
  \Lambda^{2} &=\Lambda^{2}_{7} \oplus \Lambda^{2}_{21}\, , \\
  \Lambda^{3} &=\Lambda^{3}_{8} \oplus \Lambda^{3}_{48}\, , \\
  \Lambda^{4} &= \Lambda^{4}_{+} \oplus \Lambda^{4}_{-}\, , \\
  \Lambda^{4}_{+}  &= \Lambda^{4}_{1} \oplus \Lambda^{4}_{7} \oplus \Lambda^{4}_{27},  \\
  \Lambda^{4}_{-} &= \Lambda^{4}_{35}.
 \end{align*}
Here $\Lambda^{i}_{j}$ denotes the irreducible subrepresentation of $\Lambda^{i}$ of dimension $j$. As mentioned above, $\Lambda^1_8=V$, and $\Lambda^2_7 \cong H$. The second isomorphism is given by Clifford multiplication: 
 \begin{align}  \label{eqn:cI} 
 \cI\colon\Lambda^2_7  & \too H\, ,  \nonumber \\
 \alpha & \mapsto \alpha \cdot \eta\, ,
 \end{align}
The fact that $\cI$ is an isomorphism follows from $\Spin(7)$-equivariance together with the identity $\la \alpha\cdot\eta,\eta\ra
= 0$, for all $\alpha\in \Lambda^{2}$. 

The subspace $\Lambda^2_{21}$ is the space associated to the Lie algebra 
$\mathfrak{spin}(7)\simeq \mathfrak{so}(7)\subset \mathfrak{so}(8) = \Lambda^2$. Moreover, we have the eigenvalue decomposition:
  \begin{align*}
  \Lambda^2_7 &= \{ \alpha \in \Lambda^2 \, | \, *(\alpha\wedge \Omega)= 3\alpha\}, \\
  \Lambda^2_{21} &=\{ \alpha \in \Lambda^2 \, | \, *(\alpha\wedge \Omega)= - \alpha\}.
  \end{align*}
For three-forms, we have: 
  \begin{align*}
 \Lambda^3_8 &=\{ *(\alpha \wedge \Omega) | \alpha\in \Lambda^1_8\}, \\ 
 \Lambda^3_{48} &=\{\beta \in \Lambda^3 | \beta\wedge\Omega=0\}.
  \end{align*}
Regarding four-forms, we have that $\Lambda^4_{\pm}$ are the eigenspaces of the Hodge star operator $*$ on $\Lambda^4$, both of dimension $35$. It can be seen that $\Lambda^4_1=\la \Omega\ra$. For describing $\Lambda^4_7$, consider 
$\Lambda^2_7 \subset \Lambda^2\subset V\ox V  \cong V \ox V^*$, and take the image of $\Lambda^2_7  \ox \Lambda^4_1 
\to  V \ox V^* \ox \Lambda^4 V \to \Lambda^4 V$, by contracting $(v\ox \Theta) \ox \alpha \mapsto v\wedge i_\Theta \alpha$.

\begin{prop}\label{prop:wedges}
 Under the wedge map, we have the following:
\begin{align*}
  \Lambda^2_7 \x \Lambda^2_7 & \too  \Lambda^4_{27} \oplus \Lambda^4_1  , \\
  \Lambda^2_7 \x \Lambda^2_{21} & \too  \Lambda^4_7 \oplus \Lambda^4_{35}  , \\
  \Lambda^2_{21} \x \Lambda^2_{21} & \too \Lambda^4_1 \oplus \Lambda^4_{27}  \oplus \Lambda^4_{35}.
\end{align*}
\end{prop}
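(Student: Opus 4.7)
The plan is to exploit $\Spin(7)$-equivariance of the wedge map together with Schur's lemma applied to the known $\Spin(7)$-irreducible decompositions of $\Lambda^2$ and $\Lambda^4$. The wedge map $\wedge \colon \Lambda^2 \ox \Lambda^2 \to \Lambda^4$ is $\Spin(7)$-equivariant, and since $2$-forms graded-commute, it factors through $\Sym^2 \Lambda^2$. Restricting to the summands $\Lambda^2_7$ and $\Lambda^2_{21}$ yields the three $\Spin(7)$-equivariant maps
$$
\Sym^2 \Lambda^2_7 \too \Lambda^4, \qquad \Lambda^2_7 \ox \Lambda^2_{21} \too \Lambda^4, \qquad \Sym^2 \Lambda^2_{21} \too \Lambda^4,
$$
whose images we must identify inside $\Lambda^4 = \Lambda^4_1 \oplus \Lambda^4_7 \oplus \Lambda^4_{27} \oplus \Lambda^4_{35}$.

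The next step is to decompose each source as a $\Spin(7)$-representation. Using the isomorphism $\Lambda^2_7 \cong H$ from Clifford multiplication by $\eta$ (equation \eqref{eqn:cI}) and the identification $\Lambda^2_{21} \cong \mathfrak{spin}(7) \cong \wedge^2 H$, classical representation theory of $\SO(7)$ (which factors through $\Spin(7)$) yields
$$
\Sym^2 \Lambda^2_7 \cong \Sym^2 H = \mathbf{1} \oplus \mathbf{27},
$$
$$
\Lambda^2_7 \ox \Lambda^2_{21} \cong H \ox \wedge^2 H = \wedge^3 H \oplus H \oplus \mathbf{105} = \mathbf{35} \oplus \mathbf{7} \oplus \mathbf{105},
$$
$$
\Sym^2 \Lambda^2_{21} \cong \Sym^2 \wedge^2 H = \mathbf{1} \oplus \mathbf{27} \oplus \mathbf{35} \oplus \mathbf{168},
$$
the last being the standard decomposition of algebraic curvature tensors (scalar $\oplus$ traceless Ricci $\oplus$ $\wedge^4 H$ $\oplus$ Weyl), with dimensions $1+27+35+168 = 231 = \binom{22}{2}$. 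Since $\Spin(7)$ has only one irreducible representation of each of the dimensions $1, 7, 27, 35$ appearing in $\Lambda^4$, and none of dimension $105$ or $168$, Schur's lemma forces the $\mathbf{105}$-summand in the second case and the $\mathbf{168}$-summand in the third case to lie in the kernel, and forces every other irreducible summand to map into the unique summand of $\Lambda^4$ of matching type. This gives the claimed containments.

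Finally, one must verify that the map is genuinely nonzero on each of the surviving summands (so that the arrows capture the full image, not just an upper bound). For each summand this reduces to a one-line check since it is either zero or an isomorphism onto the corresponding irreducible piece of $\Lambda^4$. The trivial component in $\Sym^2 \Lambda^2_7$ detects $\la \alpha, \alpha\ra \Omega$, which is nonzero for any nonzero $\alpha \in \Lambda^2_7$; the other components are verified by picking simple test elements in the standard basis and using the eigenvalue characterizations of $\Lambda^2_7$ and $\Lambda^2_{21}$ as $\pm$-eigenspaces of $*(\,\cdot\,\wedge\Omega)$, then reading off the self-dual versus anti-self-dual parts of the resulting $4$-form.

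I expect the main obstacle to be the third case: identifying the $\mathbf{35}$-summand of $\Sym^2 \wedge^2 H$ with $\Lambda^4_{35} = \Lambda^4_-$ and, more importantly, ruling out the possibility that this summand lies in the kernel. The cleanest way to settle this is to exhibit two orthogonal elements $\alpha, \beta \in \Lambda^2_{21}$ for which $\alpha \wedge \beta$ has a nonzero anti-self-dual component, computed directly from the model form $\Omega_0$ of \eqref{eq:Omega0}. The other non-vanishing statements follow similarly by direct computation in the standard basis.
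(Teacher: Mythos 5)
Your approach is correct and takes a genuinely different route from the paper. The paper handles the third case by reducing the structure group to $\SU(4)\subset\Spin(7)$ and tracking the image of $v\wedge v$ through the Hodge-theoretic decomposition of real $(p,q)$-forms ($\Lambda^2_{21}=A_-\oplus\triangle^{1,1}_{\prim}$, $\triangle^{2,2}=\triangle^{2,2}_{\prim}\oplus\triangle^{1,1}_{\prim}\omega\oplus\la\omega^2\ra$, etc.), producing explicit test elements along the way. You instead identify $\Lambda^2_7\cong H$, $\Lambda^2_{21}\cong\wedge^2 H$ as $\Spin(7)$-representations factoring through $\SO(7)$ and decompose $\Sym^2 H$, $H\ox\wedge^2 H$, and $\Sym^2(\wedge^2 H)$ into $\SO(7)$-irreducibles, then invoke Schur's lemma together with the fact that $\Spin(7)$ has unique irreducibles in dimensions $1,7,27,35$ (and none in dimensions $105$ or $168$). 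This is a cleaner and more systematic way to obtain the upper bounds on the images, as it requires no choice of $\SU(4)$-reduction and no explicit coordinates. The trade-off is at the non-vanishing end: the paper's $\SU(4)$ calculus hands it concrete test elements ($dz_{1\bar2}\wedge dz_{3\bar4}$, $(dz_{1\bar1}-\tfrac14\omega)\wedge(dz_{1\bar1}-\tfrac14\omega)$, the invariant quadratic form for $\Lambda^4_1$), whereas you verify only the trivial component of $\Sym^2\Lambda^2_7$ and then assert that the remaining six non-vanishing checks ``follow similarly by direct computation.'' That assertion is reasonable but is the one place where your proposal is not self-contained; to match the paper's level of completeness you would need to actually exhibit elements hitting $\Lambda^4_{27}$ in the first case, $\Lambda^4_7$ and $\Lambda^4_{35}$ in the second, and all three of $\Lambda^4_1,\Lambda^4_{27},\Lambda^4_{35}$ in the third. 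You also implicitly use that the $35$-dimensional piece $\wedge^4 H$ of $\Sym^2(\wedge^2 H)$ and the $\wedge^3 H$ inside $H\ox\wedge^2 H$ are the same irreducible as $\Lambda^4_{35}$; this is true (the $35$-dimensional irreducible of $\mathfrak{so}(7)$, with highest weight $2\omega_3$, is unique) but deserves a sentence, since Schur's lemma would otherwise only tell you the map is zero or an isomorphism without settling which.
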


\begin{proof}
The first and second item appear in Subsection 2.1 and Remark 3 of \cite{Munoz-JMPA}.

The last one is proved in an analogous fashion to \cite{Munoz-JMPA}. We consider the $\Spin(7)$-structure induced
by an $\SU(4)$-structure under the inclusion $\SU(4)\subset \Spin(7)$. The $\SU(4)$-structure is given by a complex
structure $J$ on $V$, a K\"ahler form $\omega$ and a $(4,0)$-form $\theta$, by setting
$\Omega=\frac12 \omega^2+ \Re \, \theta$. The complex structure allows to define the spaces of $(p,q)$-forms
$\Lambda^{p,q}\subset \Lambda^{p+q}_\CC$. We denote $\triangle^{p,q}=\Re(\Lambda^{p,q})$, and use
the sub-index $\prim$ to denote the subspace of primitive forms. By \cite{Munoz-JMPA}, 
$\Lambda^2_{21}=A_-\oplus \triangle^{1,1}_{prim}$, where $A_\pm$ are defined as the eigensapces of
$\triangle^{2,0}$ of the anti-linear complex Hodge operator $*_\theta$ defined by $\theta$. To characterize the image of the wedge map, 
it is enough to see where it lies the image of $v\wedge v$ for an element $v$, since the collection of them span the image
of $\Lambda^2_{21}\x \Lambda^2_{21}$. For this we can use an element $v\in \triangle^{1,1}_{prim}$, which
has image in $\triangle^{2,2}=\triangle^{2,2}_{\prim}\oplus \triangle^{1,1}_{\prim}\omega\oplus \la \omega^2\ra$.
Also, in \cite{Munoz-JMPA} it is proved that $\Lambda^4_7=A_-\omega \oplus \la \im \, \theta\ra$, therefore
all the three
summands of $\triangle^{2,2}$ lie in $\Lambda^4_1 \oplus \Lambda^4_{27}  \oplus \Lambda^4_{35}$. To see 
that the three of them appear, note that the component of $v\wedge v$ in 
$\Lambda^4_1$ is non-zero, since $\Sym^2(\Lambda^2_{21})$ has an invariant
quadratic form. Also it is easy to write a pair of forms in $\triangle^{1,1}_{\prim}$ whose wedge
is in $\triangle^{2,2}_{\prim}$ (i.e. $dz_{1\bar2}$, $dz_{3\bar4}$), and also a pair of forms whose wedge is
in $\triangle^{1,1}_{\prim}\omega$ (i.e. $dz_{1\bar1}-\frac14\omega$, and itself). So all three components appear.
\end{proof}

\begin{remark}\label{rem:ccc}
If $\alpha\in \Lambda^2_7$, $\beta\in \Lambda^2_{21}$, and $\alpha\wedge\beta=0$, then either $\alpha=0$ or $\beta=0$.
To prove this, suppose that $\alpha\neq 0$. Then we can consider an $\mathrm{SU}(4)\subset \Spin(7)$-structure such that
$\omega=\alpha$. With respect to this $\mathrm{SU}(4)$-structure, we have $\Lambda^2_{21}=A_-\oplus \triangle_{prim}^{1,1}$.
But $\omega A_- \oplus \omega \triangle^{1,1}_{prim}\subset \Lambda^4$ is a $21$-dimensional vector subspace.
So $\omega\wedge \beta=0\implies \beta=0$.
\end{remark}

Now we want to have a closer look at the space $\cS \subset \Lambda^4$ of all $\Spin(7)$-forms, and
the spaces $\cS_g, \cS_{\nu}, \cS_c$ defined above. 

\begin{prop}\label{prop:tangent}
Consider $\Omega\in \cS$. We have the following tangent spaces at $\Omega$:
\begin{align*}
 T_\Omega \cS &= \Lambda^4_1 \oplus \Lambda^4_7 \oplus \Lambda^4_{35}, \\
 T_\Omega \cS_g &= \Lambda^4_7 , \\
 T_\Omega \cS_{\nu} &= \Lambda^4_7 \oplus \Lambda^4_{35}, \\
 T_\Omega \cS_c &= \Lambda^4_1 \oplus \Lambda^4_7 . 
\end{align*}
\end{prop}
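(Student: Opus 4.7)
The plan is to use the homogeneous-space description $\cS = \GL^+(8,\RR)/\Spin(7)$. Since $\cS$ is a single orbit, the tangent space at $\Omega$ is the image of the infinitesimal action
\[
\rho\colon \mathfrak{gl}(8,\RR) \to \Lambda^4,\qquad A\mapsto A\cdot \Omega,
\]
whose kernel equals the Lie algebra of the stabilizer, namely $\mathfrak{spin}(7)\subset \mathfrak{so}(8)\subset \mathfrak{gl}(8,\RR)$. Under the identification $\mathfrak{so}(8)=\Lambda^2$ used in the paper, this kernel is precisely $\Lambda^2_{21}$.

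Next I would write down the $\Spin(7)$-equivariant decomposition
\[
\mathfrak{gl}(8,\RR) \;=\; \RR\cdot\Id \;\oplus\; \Lambda^2_7 \;\oplus\; \Lambda^2_{21} \;\oplus\; \Sym^2_0 V^*,
\]
where $\Sym^2_0 V^*$ denotes the trace-free symmetric square. Because $\ker\rho = \Lambda^2_{21}$ meets each of the three other summands trivially, $\rho$ is injective on each of $\RR\cdot\Id$, $\Lambda^2_7$ and $\Sym^2_0 V^*$. Applying Schur's lemma to the equivariant map $\rho$ and the decomposition $\Lambda^4 = \Lambda^4_1 \oplus \Lambda^4_7 \oplus \Lambda^4_{27} \oplus \Lambda^4_{35}$ from the paper, the image of each summand is forced: $\RR\cdot\Id$ hits the unique trivial summand $\Lambda^4_1 = \RR\Omega$ (indeed $\Id\cdot\Omega = 4\Omega$), $\Lambda^2_7$ maps isomorphically onto the unique $7$-dimensional summand $\Lambda^4_7$, and $\Sym^2_0 V^*$ maps isomorphically onto $\Lambda^4_{35}$. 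Summing gives $T_\Omega\cS = \Lambda^4_1 \oplus \Lambda^4_7 \oplus \Lambda^4_{35}$, of the expected dimension $1+7+35=43$.

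The remaining three cases then follow by simply restricting $\rho$ to the tangent Lie algebra of the relevant subgroup: $\mathfrak{so}(8)$ for $\cS_g$ gives $\Lambda^4_7$; $\mathfrak{sl}(8,\RR) = \mathfrak{so}(8) \oplus \Sym^2_0 V^*$ for $\cS_\nu$ gives $\Lambda^4_7 \oplus \Lambda^4_{35}$; and $\RR\cdot\Id \oplus \mathfrak{so}(8)$ for $\cS_c$ gives $\Lambda^4_1 \oplus \Lambda^4_7$. The dimensions $7$, $42$ and $8$ match those of $\SO(8)/\Spin(7)$, $\SL(8,\RR)/\Spin(7)$ and $(\RR_+\cdot\SO(8))/\Spin(7)$.

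The step I expect to demand the most care is the claim that $\Sym^2_0 V^*$ is an irreducible $\Spin(7)$-module of dimension $35$, isomorphic to $\Lambda^4_{35}$. I would settle this by decomposing $V\otimes V$ as a $\Spin(7)$-module: the antisymmetric part is already known to be $\Lambda^2_7 \oplus \Lambda^2_{21}$, so a dimension count forces the symmetric part to be $\RR \oplus \Lambda^4_{35}$, which identifies $\Sym^2_0 V^* \cong \Lambda^4_{35}$ and in particular gives its irreducibility. With this identification in hand, injectivity of $\rho$ on each non-kernel summand is automatic from $\ker\rho = \Lambda^2_{21}$, and the rest of the argument reduces to Schur-lemma bookkeeping.
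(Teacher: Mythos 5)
Your approach is correct and lands on the same homogeneous-space setup, but it is significantly more detailed than the paper's. The paper's proof is a bare dimension count: $T_\Omega\cS$ is a $\Spin(7)$-subrepresentation of $\Lambda^4=\Lambda^4_1\oplus\Lambda^4_7\oplus\Lambda^4_{27}\oplus\Lambda^4_{35}$ of dimension $43=\dim\GL^+(8,\RR)-\dim\Spin(7)$, and the only such subrepresentation is $\Lambda^4_1\oplus\Lambda^4_7\oplus\Lambda^4_{35}$; the cases $\cS_g,\cS_\nu,\cS_c$ follow identically from the dimensions $7$, $42$, $8$. What you add is the explicit decomposition of $\mathfrak{gl}(8,\RR)$ into $\Spin(7)$-irreducibles and the Schur-lemma match of each non-kernel summand with its image inside $\Lambda^4$. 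That is more informative (it pinpoints which infinitesimal motions generate each piece of the tangent space, and records the isomorphism $\Sym^2_0 V^*\cong\Lambda^4_{35}$ explicitly), at the cost of requiring an extra representation-theoretic input.

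That input is the step you yourself flag, and your justification of it needs tightening. The assertion that ``a dimension count forces the symmetric part to be $\RR\oplus\Lambda^4_{35}$, which identifies $\Sym^2_0 V^*\cong\Lambda^4_{35}$ and in particular gives its irreducibility'' does not follow from a dimension count alone: a $35$-dimensional $\Spin(7)$-module need not be irreducible nor isomorphic to $\Lambda^4_{35}$. You should either cite the standard fact that for the $8$-dimensional spin representation $\Delta$ of $\Spin(7)$ one has $\Sym^2\Delta\cong\RR\oplus\Lambda^3\RR^7$ with $\Lambda^3\RR^7$ irreducible of dimension $35$, or else sidestep irreducibility by borrowing the paper's count: injectivity of $\rho$ on $\RR\cdot\Id\oplus\Lambda^2_7\oplus\Sym^2_0V^*$ together with $\dim\im\rho=43$ shows that $\rho(\Sym^2_0V^*)$ is a $35$-dimensional subrepresentation of $\Lambda^4$ meeting $\Lambda^4_1\oplus\Lambda^4_7$ trivially, hence contained in $\Lambda^4_{27}\oplus\Lambda^4_{35}$, and since $\Lambda^4_{27}$ has the wrong dimension and $\Lambda^4_{35}$ is irreducible, the image must be $\Lambda^4_{35}$. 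With that one step filled in, your proof is complete and correct.
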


\begin{proof}
The space $\cS$ is diffeomorphic to a homogeneous space $\cS \cong \GL^+(8,\RR)/\Spin(7)$, with $\Omega$ corresponding
to the class of the identity $\Id\in \GL^+(8,\RR)$. 
The tangent space $T_\Omega\cS$ carries the isotropy representation of the stabilizer at $\Omega$ of the $\GL^+(8,\RR)$-action on $\cS$, which is isomorphic to $\Spin(7)$. Hence the tangent space $T_\Omega\cS$ becomes a $\Spin(7)$-representation, and the action of $\Spin(7)$ on the four-forms correspond with the adjoint action on the tangent space $T_{\Id} ( \GL^+(8,\RR)/\Spin(7))$. This means that $T_\Omega \cS$ is a $\Spin(7)$-subrepresentation of $\Lambda^4$. As it is of dimension $43$, the result follows. The other items are analogous.
\end{proof}

\section{$\Spin(7)$-manifolds} \label{sec:Spin(7)-mfds}

Let $M$ be an oriented $8$-dimensional manifold. For each point $p\in M$ we denote by $\S_{p} \subset \Lambda^{4} T^{\ast}_{p} M$ 
the set of all $\Spin(7)$-forms at $p$, namely $\Omega_{p}\in \S_{p}$ if there exists an oriented isomorphism 
$f_{p}\colon T_{p}M \to \RR^{8}$  such that $\Omega_{p} = f^{\ast} \Omega_{0}$, where $\Omega_{0}$ is the canonical $4$-form 
defined in equation \eqref{eq:Omega0}. We denote by $\S(M)$ the fiber bundle over $M$ with fiber given by $\S_{p}$, for each $p\in M$. 
Then, global sections of $\S(M)$ are by construction in one-to-one correspondence with reductions of the frame bundle of $M$ from 
$\GL^{+}(8,\RR)$ to $\Spin(7)$. We define $\cS(M):= \Omega^{0}(\Sigma(M))$ to be the space of smooth sections of $\Sigma$.

\begin{definition} \label{def:Spin(7)-structure}
A $\Spin(7)$-structure on an $8$-dimensional manifold $M$ is a reduction of the frame bundle $F(M)$ of $M$ to $\Spin(7)$.
That is, a $\Spin(7)$-structure is a choice of a $4$-form $\Omega\in \Omega^4(M)$ such that $\Omega_p\in \Sigma_p$, for each $p\in M$.
\end{definition}

We have that $\cS(M)$ is then the space of $\Spin(7)$-structures on $M$. The existence of a $\Spin(7)$-structure $\Omega$ on $M$ allows for a point-wise decomposition of $\Lambda^{i}T^{\ast}M$, $i=1,\hdots, 8$, in $\Spin(7)$-representations, which we denote with a subscript as described in section \ref{sec:Spin(7)rep}. We define
 $$
 \Lambda^{i}_{k}(M) := \Lambda^{i}_{k}\, T^{\ast}M\, , \qquad \Omega^{i}_{k}(M) := \Gamma(\Lambda^{i}_{k}T^{\ast} M)\,  ,
 $$
where $k$ denotes the specific $\Spin(7)$-representation. A $\Spin(7)$-structure determines an orientation and a riemannian metric on $M$. An oriented $8$-dimensional manifold $M$ admits a $\Spin(7)$-structure (compatible with that orientation) if and only if $M$ is spin and in addition (cf.\  Theorem 10.7 in \cite{Spingeometry})
\begin{equation} \label{eq:topcond1}
p_{1}(M)^2 - 4 p_{2}(M) + e (M) = 0\, ,
\end{equation} 
where $p_{1}(M)$, $p_{2}(M)$ are the Pontrjagin classes of $M$ and $e(M)$ is the Euler class of $M$.

Equivalently, we can characterize $\Spin(7)$-structures by using non-zero spinors on $M$. We fix an orientation and a Riemannian metric, so that we have a frame bundle with structure group $\SO(8)$. Recall that an eight-manifold is spin if and only if the frame bundle can be lifted to a $\Spin(8)$-bundle $P_{\Spin(8)}(M)$ in a compatible way with the double covering $\mathfrak{D}\colon \Spin(8)\to \SO(8)$. The obstruction for an orientable manifold to be spin is given by its second Stiefel-Whitney class. Assuming that $M$ is spin, we  can equip $M$ with two spinor bundles $S^\pm$, which are associated to $P_{\Spin(8)}(M)$ through the two eight-dimensional irreducible inequivalent representations $\gamma^\pm: \Spin(8) \to \SO(S^\pm)$ of $\Spin(8)$ and of chirality $\pm$. 

\begin{prop}\cite[Theorem 10.7]{Spingeometry} \label{prop:spinequivalence}
An oriented $8$-dimensional manifold $M$ is $\Spin(7)$ if and only if it is spin and carries a unit-norm spinor $\eta\in \Gamma(S^{+})$.
\end{prop}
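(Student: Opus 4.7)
The plan is to prove each implication by translating a $\Spin(7)$-reduction of the frame bundle into a unit spinor, and conversely, exploiting the characterization recalled in Section \ref{sec:Spin(7)rep} of $\Spin(7)\subset\Spin(8)$ as the stabilizer of a fixed unit vector $\eta_{0}\in S^{+}$.

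For the forward implication, assume $M$ carries a $\Spin(7)$-structure, i.e.\ a principal $\Spin(7)$-subbundle $P_{\Spin(7)}\subset F^{+}(M)$. Since the composition $\Spin(7)\hookrightarrow\Spin(8)\xrightarrow{\mathfrak{D}}\SO(8)$ is injective (the kernel of $\mathfrak{D}$ is generated by the non-trivial central element that, via triality, acts by $-\Id$ on $S^{+}$ and therefore does not lie in $\Spin(7)$), the extension of structure group to $\Spin(8)$ produces a principal bundle $P_{\Spin(8)}:=P_{\Spin(7)}\times_{\Spin(7)}\Spin(8)$ whose quotient by $\ker\mathfrak{D}$ recovers the oriented orthonormal frame bundle of $M$ with respect to the metric induced by the $\Spin(7)$-structure. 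This exhibits $M$ as spin. Now consider the constant $\Spin(7)$-equivariant map $P_{\Spin(7)}\to S^{+}$, $u\mapsto\eta_{0}$, which is well-defined precisely because $\eta_{0}$ is fixed by $\Spin(7)$. It descends to a global section $\eta\in\Gamma(S^{+})=\Gamma(P_{\Spin(8)}\times_{\gamma^{+}}S^{+})$, and $\eta$ has pointwise unit norm because $\gamma^{+}(\Spin(8))\subset\SO(S^{+})$ preserves the bilinear pairing $\la-,-\ra$ for which $\eta_{0}$ has norm one.

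For the converse, suppose $M$ is spin with principal bundle $P_{\Spin(8)}$, and let $\eta\in\Gamma(S^{+})$ be a unit-norm spinor. Fixing a unit vector $\eta_{0}\in S^{+}$ once and for all, define
\begin{equation*}
P_{\Spin(7)}:=\{\, u\in P_{\Spin(8)} : [u,\eta_{0}]=\eta_{\pi(u)}\,\}\subset P_{\Spin(8)},
\end{equation*}
where $[u,\eta_{0}]$ denotes the class of $(u,\eta_{0})$ in the associated bundle. Since $\gamma^{+}(\Spin(8))=\SO(S^{+})$ acts transitively on the unit sphere $S^{7}\subset S^{+}$, each fibre of $P_{\Spin(7)}\to M$ is non-empty, and by the characterization recalled in Section \ref{sec:Spin(7)rep} the stabilizer of $\eta_{0}$ equals $\Spin(7)$, so each fibre carries a free and transitive right $\Spin(7)$-action. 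The image of $P_{\Spin(7)}$ in the oriented orthonormal frame bundle under $\mathfrak{D}$ then supplies the desired $\Spin(7)$-reduction.

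The step requiring slightly more care is the smoothness and local triviality of $P_{\Spin(7)}$ in the converse direction; this is handled in the standard way by observing that the orbit map $\Spin(8)\to S^{7}$, $g\mapsto g\cdot\eta_{0}$, is a smooth principal $\Spin(7)$-fibration admitting local sections, so that on any local trivialization of $P_{\Spin(8)}$ in which $\eta$ is represented by a smooth $S^{7}$-valued function, $P_{\Spin(7)}$ is identified with the pullback of this $\Spin(7)$-bundle. This technical step, together with Proposition \ref{prop:tangent} interpreted at the level of structure groups, is really the only subtlety in the argument.
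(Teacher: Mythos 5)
Your proof is correct as a standalone argument, but note that the paper supplies no proof of this proposition at all: it simply cites Theorem~10.7 of Lawson--Michelsohn, so there is nothing in the paper's text for your argument to be compared against. What you have written is the standard structure-group translation: a $\Spin(7)$-reduction of the oriented orthonormal frame bundle lifts to a $\Spin(8)$-bundle because $-1\notin\Spin(7)\subset\Spin(8)$, giving the spin condition, and the constant equivariant map $u\mapsto\eta_0$ descends to a unit section of $S^+$; conversely the fibrewise preimage of a nowhere-zero section of $S^+$ under the evaluation map is a $\Spin(7)$-subbundle because $\Spin(8)$ acts transitively on the unit sphere $S^7\subset S^+$ with stabilizer $\Spin(7)$. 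That is essentially the proof in the cited reference, so the reconstruction is faithful.

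Two small imprecisions worth correcting. First, triality plays no role in seeing that $-1$ acts by $-\Id$ on $S^+$: this is immediate from the Clifford-module structure, since $-1\in\RR\subset\Cl(8)$ acts by scalar multiplication on any $\Cl(8)$-module, irrespective of triality. Second, the closing invocation of Proposition~\ref{prop:tangent} is misplaced: that proposition computes tangent spaces $T_\Omega\cS_g\cong\Lambda^4_7$ etc., which encodes $\cS_g\cong\SO(8)/\Spin(7)$, not the $\Spin(8)/\Spin(7)\cong S^7$ fibration your smoothness argument actually uses. The relevant homogeneous-space fact is recalled in the surrounding text of Section~\ref{sec:Spin(7)rep}, not in that proposition, and citing it as Proposition~\ref{prop:tangent} ``interpreted at the level of structure groups'' obscures rather than justifies the step.
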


We will call such unit spinor $\eta\in \Gamma(S^{+})$ the \emph{associated spinor} to the $\Spin(7)$-structure $\Omega$. as the The left hand side in equation \eqref{eq:topcond1} is equal to $e(S^+)$. Since the rank of $S^{+}$ equals the dimension of $M$, the existence of a nowhere zero spinor $\eta\in \Gamma(S^+)$ is equivalent to the vanishing $e(S^+)=0$. 

Let $(M,\Omega)$ be a $\Spin(7)$-manifold, and let $g$ be the induced Riemannian metric and $\nabla$ the Levi-Civita connection. The $\Spin(7)$-structure is integrable, that is, the holonomy of $(M,g)$ is contained in $\Spin(7)<\SO(8)$, if and only if $\nabla \Omega=0$. 
By \cite{Fernandez-Gray,FernandezSpin(7)}, this is equivalent to $d\Omega=0$. Note that $\ast\Omega=\Omega$, so that in this case $\Omega$ is closed and co-closed. Examples of compact manifolds with $\Spin(7)$-holonomy are relatively scarce. The first examples were given by D.\ Joyce \cite{Joyce2007}. If we relax the requirement of having $\Spin(7)$-holonomy and we allow general $\Spin(7)$-structures, then there are more examples. 
We find useful to point out the following result, which seems to have passed unnoticed in the literature.

\begin{prop}
\label{prop:AQK}
Let $M$ be a closed $8$-manifold admitting an almost-Quaternionic structure on its tangent space. Then, $M$ admits a $Spin(7)$-structure $\Omega\in \cS(M)$, in general unrelated to the existent almost-Quaternionic structure.
\end{prop}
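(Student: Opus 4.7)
The plan is to invoke Proposition \ref{prop:spinequivalence}, which reduces the existence of a $\Spin(7)$-structure on $M$ to two conditions: (i) $M$ is spin, and (ii) the positive spinor bundle $S^{+}$ admits a nowhere-vanishing section, i.e.\ $e(S^{+})=0$ (equivalently, identity (\ref{eq:topcond1})). The almost-Quaternionic structure furnishes a reduction of the frame bundle to the subgroup $\Sp(2)\cdot\Sp(1) = (\Sp(2)\times \Sp(1))/\ZZ_{2} \subset \SO(8)$ acting on $\HH^{2}\cong \RR^{8}$ by two-sided quaternionic multiplication, and both steps will be deduced from properties of this inclusion.

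For (i), I would exhibit an inclusion $\Sp(2)\cdot\Sp(1)\hookrightarrow \Spin(8)$, so that the $\Sp(2)\cdot\Sp(1)$-bundle lifts canonically to a $\Spin(8)$-bundle on $M$. Since $\Sp(2)\times \Sp(1)$ is simply connected, the homomorphism $\Sp(2)\times \Sp(1)\to \SO(8)$ admits a unique lift to $\Spin(8)$, and it suffices to verify that the kernel element $(-I,-1)$ is sent to $+1\in\Spin(8)$. A short Clifford-algebra computation along explicit one-parameter subgroups shows that both $(-I,1)$ and $(1,-1)$ project to $-\Id_{\RR^{8}}$ (realised as four simultaneous $\pi$-rotations in mutually orthogonal $2$-planes) and lift to the volume element $\omega=e_{1}\cdots e_{8}\in\Spin(8)$. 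Since $\dim M=8\equiv 0\pmod 4$ gives $\omega^{2}=(-1)^{8\cdot 7/2}=1$, their product $(-I,-1)$ lifts to $\omega^{2}=1$, so the inclusion factors as claimed and $M$ is spin.

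For (ii), I use the formal factorisation $TM\otimes_{\RR}\CC\cong H\otimes_{\CC} E$ supplied by the AQ structure, with $H,E$ the standard complex $\Sp(1),\Sp(2)$-representations of ranks $2$ and $4$; although $H$ and $E$ need not be globally defined, the symmetric polynomials in their Chern roots $\pm x$ and $\pm y_{1},\pm y_{2}$ are. Consequently, the Pontrjagin roots of $TM$ may be written $\eta_{1}=x+y_{1},\,\eta_{2}=x-y_{1},\,\eta_{3}=x+y_{2},\,\eta_{4}=x-y_{2}$. By the triality description of the spin representation, the Pontrjagin roots of $S^{+}$ are the four half-sums $\tfrac{1}{2}(\pm\eta_{1}\pm\eta_{2}\pm\eta_{3}\pm\eta_{4})$ with an even number of minus signs; but $\eta_{1}+\eta_{2}=2x=\eta_{3}+\eta_{4}$, so $\tfrac{1}{2}(\eta_{1}+\eta_{2}-\eta_{3}-\eta_{4})=0$, making one Pontrjagin root of $S^{+}$ vanish. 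Hence $e(S^{+})=0$, which is (\ref{eq:topcond1}). Combining (i) and (ii) with Proposition \ref{prop:spinequivalence} produces the desired $\Spin(7)$-structure $\Omega$, which is \textit{a priori} unrelated to the original AQ structure since $\Sp(2)\cdot\Sp(1)$ is not contained in any copy of $\Spin(7)\subset \Spin(8)$.

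The main obstacle is the verification in step (i) that the $\Sp(2)\cdot\Sp(1)$-structure automatically spin-lifts: this hinges on $\omega^{2}=1$ in $\Cl(8)$, which is special to $\dim=4n$ with $n$ even and would fail for $\Sp(n)\cdot\Sp(1)\subset\SO(4n)$ at odd $n$. The characteristic-class identity in (ii) is then immediate from the above Pontrjagin-root symmetry.
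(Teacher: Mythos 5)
Your argument is a genuinely different and more explicit route than the paper's, which merely cites Corollary 8.3 of \cite{CadekVanzura} together with Theorem 10.7 of \cite{Spingeometry} and gives no further details. You instead derive both hypotheses of proposition \ref{prop:spinequivalence} directly: the spin-lift of $\mathrm{Sp}(2)\cdot\mathrm{Sp}(1)\subset\SO(8)$ to $\Spin(8)$ for (i), and the vanishing of a Pontrjagin root of $S^+$ from the formal factorisation $TM\otimes\CC\cong H\otimes E$ for (ii), the latter being precisely the content of the \v{C}adek--Van\v{z}ura corollary. This self-contained unpacking is worth having, and both computations check out (in (ii) one also tacitly uses that $H^8(M;\ZZ)\cong\ZZ$ is torsion-free for $M$ closed, oriented and connected, so the vanishing over $\QQ$ of the product of Pontrjagin roots does force $e(S^+)=0$ integrally).

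One correction is needed in your closing paragraph. You attribute the success of step (i) to $\omega^2=1$ in $\Cl(8)$ being ``special to $\dim=4n$ with $n$ even.'' That is not so: the volume element of $\Cl(m)$ squares to $(-1)^{m(m+1)/2}$ (or $(-1)^{m(m-1)/2}$ in the opposite signature convention), which equals $+1$ whenever $m\equiv 0\pmod 4$, i.e.\ in $\Cl(4n)$ for \emph{every} $n$, odd or even. The parity-sensitive quantity is the relative sign between the two lifts, not $\omega^2$ itself. Exponentiating the one-parameter subgroups generated by left-multiplication by $i$ and by the right action of $e^{it}$, one finds $(-I,1)\mapsto(e_1e_2)(e_3e_4)\cdots(e_{4n-1}e_{4n})=\omega$, whereas $(1,-1)\mapsto(-e_1e_2)(e_3e_4)(-e_5e_6)(e_7e_8)\cdots=(-1)^n\omega$; hence $(-I,-1)$ lifts to $(-1)^n\omega^2=(-1)^n$, which is $+1$ precisely when $n$ is even. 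Your conclusion is correct for $n=2$, but the reason you state would falsely predict that $\mathrm{Sp}(1)\cdot\mathrm{Sp}(1)=\SO(4)$ spin-lifts.
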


\begin{proof}
It follows from corollary 8.3 in \cite{CadekVanzura} together with theorem 10.7 in \cite{Spingeometry}.
\end{proof}

Proposition \ref{prop:AQK} automatically provides a relatively large number of explicit manifolds carrying $\Spin(7)$-structures, which, to the best of our knowledge have not been studied or characterized. In particular, proposition \ref{prop:AQK} shows that the eight-dimensional quaternionic space $\HH\PP^2$ carries a $\Spin(7)$-structure. For an explicit early example of a compact eight-manifold carrying a non-integrable $\Spin(7)$-structure the reader may consult \cite{FernandezExample}.

For a $\Spin(7)$-structure $\Omega$, we define its torsion as \cite{FernandezSpin(7)}:
 $$
 W := d\Omega  \in \Omega^5(M).
 $$
As $\Omega^5(M) = \Omega^5_8(M) \oplus \Omega^5_{48}(M)$ in irreducible $\Spin(7)$-representation, we have the orthogonal decomposition 
$W = W_{8} \oplus W_{48}$, where  $W_8\in \Omega^5_8(M)$ and $W_{48}\in \Omega^5_{48}(M)$. 
We define the Lee form of the $Spin(7)$-structure as
 $$
 \theta = \ast (d^{\ast}\Omega \wedge\Omega)\, . 
 $$
Then reference \cite{FernandezSpin(7)} distinguishes four types of $\Spin(7)$-structures:

\begin{itemize}
\item $\Spin(7)$-holonomy structures, defined by satisfying $W_{8} = W_{48} = 0$.
\item Balanced $\Spin(7)$-structures, defined by having vanishing Lee-form, $\theta = 0$. So $W_8=0$.
\item Locally conformally parallel $\Spin(7)$-structures, defined by the condition $d\Omega =-\frac{1}{7} \theta\wedge\Omega$. So $W_{48}=0$.
\item Generic $\Spin(7)$-structures, with no specific restriction on $W_{8}$ or $W_{48}$.	
\end{itemize}

If we define a $\Spin(7)$-structure via a unit spinor $\eta$ as in proposition \ref{prop:spinequivalence}, then the $\Spin(7)$-structure is integrable if and only if $\nabla \eta=0$, where $\nabla$ is the spin Levi-Civita connection. In the non-integrable case, there exists a canonical metric-compatible connection with torsion $\nabla^{T}\colon \Omega^{0}(S^{+})\to \Omega^{1}(S^{+})$ which preserves $\eta$, i.e., satisfies $\nabla^{T}\eta = 0$.

\begin{thm} \label{thm:Spin(7)H}
 Let $(M,\Omega)$ be a $\Spin(7)$-manifold with $\Spin(7)$-structure $\Omega$ and associated spinor $\eta\in\Omega^{0}(S^{+})$. 
  The Cayley four-form    $\Omega$ and the spinor $\eta$ are related as follows:
\begin{equation} \label{eq:etaOmeganu}
\eta\otimes \eta = 1 + \Omega + \nu\, , 
\end{equation}
where $\ast 1 = \nu\in\Omega^{8}(M)$. In particular
\begin{equation}
\label{eq:etaOmega}
\Omega(u,v,w,z) = \frac{1}{4!} \la (u\wedge v\wedge w\wedge z)\cdot\eta, \eta\ra \, , \qquad u, v, w, z \in \mathfrak{X}(M)\, .
\end{equation}
Furthermore, there exists a unique connection $\nabla^{T}$ with fully antisymmetric torsion $T$ such that $\nabla^{T}\eta = 0$. The torsion is given by
\begin{equation}
\label{eq:etatorsion}
T = - d^*\Omega - \ast\left( \theta \wedge \Omega\right)\, , \qquad \theta = \frac{1}{6}  \ast\left( d^*\Omega\wedge \Omega\right)\, ,
\end{equation}
and it acts on $\eta$ through Clifford multiplication as $T\cdot \eta = -\theta \cdot \eta$. 
\end{thm}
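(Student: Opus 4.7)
The plan is to treat the three assertions in sequence: first the algebraic spinor--form identity \eqref{eq:etaOmeganu}, second the companion formula \eqref{eq:etaOmega} as an immediate consequence, and finally the existence, uniqueness and explicit form of the torsion connection $\nabla^{T}$.

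For \eqref{eq:etaOmeganu} the argument is purely representation theoretic. I would invoke the $\Spin(8)$-equivariant squaring map
\[
S\ox S \lto \Lambda^{\ast}V, \qquad \phi\ox\psi \longmapsto \sum_{k=0}^{8}\frac{1}{k!}\la \phi, e_{i_{1}\cdots i_{k}}\cdot\psi\ra\, e^{i_{1}\cdots i_{k}},
\]
and evaluate it at $\eta\ox\eta$ for $\eta\in S^{+}$ of unit norm. Chirality kills all odd-degree components, and $\Spin(7)$-invariance of $\eta$ forces each remaining component to be a $\Spin(7)$-invariant form. By the decompositions recalled in Section \ref{sec:Spin(7)rep}, $(\Lambda^{2})^{\Spin(7)}=(\Lambda^{6})^{\Spin(7)}=0$, while in degrees $0,4,8$ the invariant subspaces are $\la 1\ra$, $\Lambda^{4}_{1}=\la\Omega\ra$ and $\la\nu\ra$ respectively. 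The three coefficients are pinned down by $|\eta|^{2}=1$, by the positive-chirality relation $e_{1\cdots 8}\cdot\eta=\eta$ and by matching the degree-four piece to the model form $\Omega_{0}$ on $\RR^{8}$. Formula \eqref{eq:etaOmega} is then just the extraction of the degree-four component of this identity.

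For the connection $\nabla^{T}$, I would write $\nabla^{T}_{X}=\nabla^{LC}_{X}+A_{X}$ with $A_{X}\in\Lambda^{2}T^{\ast}M\iso\mathfrak{so}(TM)$; the requirement that the torsion be fully antisymmetric is equivalent to $A_{X}=\frac{1}{2}(X\lrcorner T)$ for some $T\in\Omega^{3}(M)$. The spinor equation $\nabla^{T}\eta=0$ then becomes
\[
\nabla^{LC}_{X}\eta \, + \, \tfrac{1}{4}(X\lrcorner T)\cdot \eta \, = \, 0.
\]
Uniqueness reduces to pointwise injectivity of $\Phi\co\Lambda^{3}V\to V^{\ast}\ox S^{+}$, $T\mapsto\bigl(X\mapsto\frac{1}{4}(X\lrcorner T)\cdot\eta\bigr)$. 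Since $\Lambda^{2}_{21}=\mathfrak{spin}(7)$ annihilates $\eta$ and the Clifford map $\cI\co\Lambda^{2}_{7}\too H$ from \eqref{eqn:cI} is an isomorphism onto $H\subset S^{+}$, $\Phi$ factors through $V^{\ast}\ox H$; the dimension count $\dim\Lambda^{3}=56=\dim(V^{\ast}\ox H)$ together with the $\Spin(7)$-irreducibility of $\Lambda^{3}_{8}$ and $\Lambda^{3}_{48}$ and Schur's lemma shows $\Phi$ is an isomorphism. For existence, I would differentiate \eqref{eq:etaOmeganu}: since $\nabla^{LC}$ annihilates $1$ and $\nu$, the derivative of the squaring identity shows that $\nabla^{LC}\eta$ is algebraically determined by $\nabla^{LC}\Omega$, hence lies in the image of $\Phi$, and $T$ is its preimage, depending linearly on $d\Omega$ (which coincides with $\nabla^{LC}\Omega$ upon total antisymmetrization).

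To extract the explicit formula \eqref{eq:etatorsion}, I would decompose $d\Omega\in\Omega^{5}(M)=\Omega^{5}_{8}\oplus\Omega^{5}_{48}$ and write the $\Omega^{5}_{8}$-piece as $c\,\theta\wedge\Omega$ for a one-form $\theta$; pairing with $\Omega$ and using $\ast\Omega=\Omega$ (so that $d^{\ast}\Omega=-\ast d\Omega$) both identifies $\theta$ with the Lee form and fixes the normalizing constant $\frac{1}{6}$. Inverting $\Phi$ then sends the $\Omega^{5}_{48}$-component of $d\Omega$ to $-d^{\ast}\Omega$ and the $\Omega^{5}_{8}$-component to $-\ast(\theta\wedge\Omega)$, producing \eqref{eq:etatorsion}. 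The last identity $T\cdot\eta=-\theta\cdot\eta$ then follows from Schur's lemma applied to the $\Spin(7)$-equivariant map $\Lambda^{3}\to S^{-}\iso V$, $T\mapsto T\cdot\eta$: its kernel contains $\Lambda^{3}_{48}$ (since $V\not\iso\Lambda^{3}_{48}$), so only the $\Omega^{3}_{8}$-piece of $T$ contributes, and a direct Clifford computation using \eqref{eq:etaOmeganu} gives $-\ast(\theta\wedge\Omega)\cdot\eta=-\theta\cdot\eta$. I expect the main obstacle to lie precisely in this final bookkeeping: tracking the $\Spin(7)$-projectors, the Hodge star and the Clifford action of $\Omega$ on $\eta$ carefully enough to produce both the factor $\frac{1}{6}$ in the Lee form and the correct signs, which requires fluent use of the squaring identity \eqref{eq:etaOmeganu} in a differentiated form.
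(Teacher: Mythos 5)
Your proposal is not a reconstruction of the paper's argument but a self-contained attempt at proving the theorem from first principles; the paper's actual proof is essentially a citation. For the identity $\eta\otimes\eta = 1 + \Omega + \nu$ and the degree-four formula, the paper simply quotes Theorem~10.18 of Lawson--Michelsohn and explains how to interpret $\eta\otimes\eta$ as an element of $\Cl(M,g)\cong\Lambda^{\ast}T^{\ast}M$; for the existence and uniqueness of $\nabla^{T}$, the explicit torsion formula and the identity $T\cdot\eta=-\theta\cdot\eta$, it quotes Theorem~1.1 of Ivanov's paper on $\Spin(7)$-connections with torsion. So while your route is sound in outline, it is essentially an attempt to reprove the two cited results rather than to follow the paper.

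As for the content of your sketch: the representation-theoretic argument for the squaring identity is the correct one underlying Lawson--Michelsohn's proof — chirality kills the odd components, $\Spin(7)$-invariance kills $\Lambda^{2}$ and $\Lambda^{6}$ (no trivial summand there), and the $\Lambda^{0},\Lambda^{4},\Lambda^{8}$ pieces are pinned by $|\eta|^{2}=1$, the eigenvalue of the volume element, and the normalization of $\Omega$. For the torsion connection you have correctly identified the key map $\Phi\colon\Lambda^{3}\to V^{\ast}\otimes H$ and the equal decompositions $\Lambda^{3}_{8}\oplus\Lambda^{3}_{48}$ on both sides, so Schur reduces injectivity to checking that $\Phi$ is nonzero on each factor — a point you assert but do not verify, and which is not automatic (a priori one irreducible factor of $\Lambda^{3}\subset V^{\ast}\otimes\Lambda^{2}$ could land in $V^{\ast}\otimes\Lambda^{2}_{21}$ after projection). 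Likewise the constant $\tfrac{1}{6}$, the signs in $T=-d^{\ast}\Omega-\ast(\theta\wedge\Omega)$, and the nonvanishing of the Clifford map $\Lambda^{3}_{8}\to S^{-}$ (needed for $T\cdot\eta=-\theta\cdot\eta$ to be a genuine computation rather than $0=0$) all require explicit Clifford calculus that you acknowledge but do not carry out. These are precisely the computations done once and for all in Ivanov's Theorem~1.1; the paper's choice to cite it buys brevity at the cost of being less self-contained, while your approach would be more illuminating if completed, but as written it leaves several nonvanishing claims and all the normalization constants unverified.
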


\begin{proof}
Equations \eqref{eq:etaOmeganu} and \eqref{eq:etaOmega} follow from \cite[Theorem 10.18]{Spingeometry}. Equation \eqref{eq:etaOmeganu} 
should be interpreted as follows: identifying $S\cong S^*$ by means of the bilinear product, we have an element $\eta\otimes \eta \in 
S\otimes S \cong S \otimes S^* =\End(S) \cong \Cl(M,g)$. But $\Cl(M,g) \cong \Lambda^* T^*M$ as vector spaces. Then $\eta\otimes \eta$ is mapped
to the poly-form $1+\Omega+\nu$.

The fact that there exists a unique connection $\nabla^{T}$ satisfying $\nabla^{T}\eta = 0$ together with equation \eqref{eq:etatorsion} and the Clifford action of $T$ on $\eta$ follow from \cite[Theorem 1.1]{IvanovSpin(7)}. 
\end{proof}

\begin{remark}
The relation between $T$ and $W$ can be extracted from equation \eqref{eq:etatorsion} and it is relatively involved:
 $$
 \ast T =  W + \frac{1}{6} \left(\ast\left( \ast W \wedge \Omega\right) \wedge \Omega\right)\, .
 $$
In particlar, $T_8=0 \Leftrightarrow W_8=0$ and $T_{48}=0 \Leftrightarrow W_{48}=0$.
\end{remark}

We will call $\nabla^T$ the \emph{Ivanov connection} associated to the $\Spin(7)$-structure $\Omega$. For later use, we want to consider in more detail the properties of the Dirac operator associated to the Ivanov connection on the spinor bundle of a $\Spin(7)$-manifold $(M,\Omega)$ as well as the corresponding index theorem. As usual, we will denote by $\eta\in\Gamma(S^{+})$ the spinor corresponding the $\Spin(7)$-structure $\Omega$. 

\begin{remark}
Let us describe the integrability condition in terms of $\nabla \eta$ and $\nabla \Omega$ and compare both tensors. 
First $\nabla \eta \in \Lambda^1 \otimes S^+$. But as it is $\la \nabla \eta ,\eta\ra=0$, we have $\nabla \eta \in \Lambda^1\otimes H$.  This
produces an element $\cI^{-1}(\nabla \eta)\in \Lambda^1\otimes \Lambda^2_7$, via (\ref{eqn:cI}). To find it explictly, note that 
the Ivanov connection is given by $\nabla_X^T Y= \nabla_X Y + \frac12 T(X,Y)$, in terms of the Levi-Civita connection. From this, it
follows that $\nabla_X \eta= -\frac12 T(X,\,$-$)\cdot \eta$. So $\cI^{-1}(\nabla \eta)=-\frac12 T$. Note also that the wedge map (i.e., anti-symmetrization)
gives an isomorphism $\Lambda^1\otimes \Lambda^2_7 \to \Lambda^3$.

Second $\nabla \Omega \in \Lambda^1\otimes \Lambda^4$. But $\nabla_X \Omega$ gives the variation of $\Omega_x$, for $x$ moving in the 
direction of $X$. As $T_\Omega \cS_g=\Lambda^4_7$, by proposition \ref{prop:tangent}, we have that $\nabla_X\Omega\in \Lambda^4_7$.
So $\nabla\Omega \in \Lambda^1\otimes \Lambda^4_7$. Again, the wedge map $\Lambda^1\otimes \Lambda^4_7 \to \Lambda^5$ is an
isomorphism, and $\nabla \Omega$ is mapped to $W=d\Omega$. In particular, we recover that $\nabla\Omega=0 \Leftrightarrow d\Omega=0$.
\end{remark}

As we have already explained, since $M$ is an $8$-dimensional oriented spin manifold, the bundle of irreducible Clifford modules $S$ admits the $\mathbb{Z}_{2}$-grading $S = S^{+}\oplus S^{-}$, given by the volume form $\nu$, which is parallel, squares to plus one and it is central in $\Cl^{\even}(M,g)$. 
Let $E$ be a real vector bundle over $M$. Then $S\otimes E = (S^{+}\otimes E) \oplus (S^{-}\otimes E)$ automatically becomes a $\mathbb{Z}_{2}$-graded bundle of real Clifford modules over $M$. 

Let $\nabla_{A}$ be a connection on $E$ and let $\nabla^{T}$ be the Ivanov spin connection. Associated to $\nabla = \nabla^{T}\otimes 1 + 1 \otimes \nabla_{A}$ we consider the Dirac operator
 \begin{equation}
 D^{\pm}_{T}\colon \Omega^{0}(S^{\pm}\otimes E)\to \Omega^{0}(S^{\mp}\otimes E)\, .
 \end{equation}
By the Index theorem we have:
 $$
 \mathrm{Ind}\, D^{+}_{T} = \mathrm{Ind}\, D^{-}_{T} = \left\{\mathrm{ch}\, E \cdot \hat{\mathrm{A}}(M)\right\}\left[ M\right]\, ,
 $$
where $\mathrm{ch}\, E$ denotes the Chern-character of $E$ and $\hat{\mathrm{A}}(M)$ denotes the \emph{A-roof genus} of $TM$.

Let us recall that in a $\Spin(7)$-manifold with $\Spin(7)$-structure given by a positive-chirality spinor $\eta$ the following isomorphisms hold
\begin{equation}
\label{eq:isospinoforms}
S^{+} \cong  \Lambda^0(M) \oplus \Lambda^{2}_{7}(M) \, , \qquad S^{-} \simeq \Lambda^{1}_{8}(M)\, ,
\end{equation}
where $\Lambda^0(M)$ is the trivial line bundle over $M$.

\begin{prop} \label{prop:DiracLA}
Through the isomorphisms \eqref{eq:isospinoforms}, the Dirac operator $D^{-}_{T}$ acts on $\Omega^{1}(M)$ as follows:
\begin{eqnarray*}
D^{-}_{T}\colon \Omega^{1}(E) &\to & \Omega^{0}(E)\oplus \Omega^{2}_{7}(E)\, ,\nonumber\\
\tau &\mapsto &  d^{\ast}_{A}\tau \oplus \pi_{7}\left( d_{A}\tau + \iota_{\tau}T\right)
\end{eqnarray*}
where $\pi_7:\Lambda^2\to\Lambda^2_7$ is the orthogonal projection.
\end{prop}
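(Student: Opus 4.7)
The plan is to compute $D^-_T$ directly from its Clifford-algebra definition $D^-_T = \sum_i e_i \cdot \nabla^T_{e_i}$ on $\Gamma(S\otimes E)$, exploiting the key property $\nabla^T \eta = 0$ from Theorem \ref{thm:Spin(7)H}. Under the isomorphism $\Lambda^1_8(M) \cong S^-$ given by Clifford multiplication by $\eta$, a section $\tau \in \Omega^1(E)$ corresponds to $\tau \cdot \eta \in \Gamma(S^- \otimes E)$. Since $\nabla^T$ is a metric connection, it is compatible with Clifford multiplication, and combined with $\nabla^T \eta = 0$, Leibniz gives
\[
D^-_T(\tau) \;=\; \sum_i e_i \cdot \nabla^T_{e_i}(\tau \cdot \eta) \;=\; \Big(\sum_i e_i \cdot \nabla^T_{e_i}\tau\Big) \cdot \eta,
\]
where $\nabla^T$ on the right acts on the 1-form part of $\tau$ and $\nabla_A$ on the $E$-part. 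This reduces the problem to a Clifford-algebra computation on the form side, followed by Clifford multiplication by $\eta$.

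Next, I would split $\nabla^T = \nabla^0 + \tfrac{1}{2}T$; on 1-forms this becomes $\nabla^T_X \tau = \nabla^0_X \tau - \tfrac{1}{2}\iota_X \iota_{\tau} T$. Decompose $\sum_i e_i \cdot \nabla^T_{e_i}\tau$ into its scalar and 2-form components in the Clifford algebra. The scalar part coincides with the Levi-Civita one, because the torsion contribution involves $\iota_{e_i}\iota_{e_i}\iota_{\tau}T$, which vanishes identically; this recovers $d^*_A\tau$ from the standard formula $d^*_A = -\sum_i \iota_{e_i}\nabla^0_{e_i}$ on $E$-valued 1-forms. The 2-form part is $\sum_i e^i \wedge \nabla^T_{e_i}\tau$; its Levi-Civita summand gives $d_A\tau$, while the torsion summand is computed from the identity $\sum_i e^i \wedge \iota_{e_i}\alpha = 2\alpha$ applied to the 2-form $\alpha = \iota_\tau T$, producing the correction $\iota_\tau T$.

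Finally, I would translate the Clifford result back to spinors using the $\Spin(7)$-decomposition $S^+ \cong \la\eta\ra \oplus H \cong \Lambda^0 \oplus \Lambda^2_7$. A Clifford scalar $c$ contributes $c\,\eta$, corresponding to $c \in \Lambda^0$. A 2-form $\alpha \in \Lambda^2$ Clifford-multiplied by $\eta$ contributes only its $\Lambda^2_7$-component, because $\Lambda^2_{21} = \mathfrak{spin}(7)$ is precisely the Lie algebra stabilizing $\eta$ and hence annihilates it by Clifford multiplication; explicitly $\alpha \cdot \eta = \cI(\pi_7(\alpha))$ in the notation of (\ref{eqn:cI}). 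Assembling the scalar and 2-form contributions yields $D^-_T(\tau) = d^*_A\tau \oplus \pi_7(d_A\tau + \iota_\tau T)$, as claimed.

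The main difficulty is purely bookkeeping: keeping signs and normalization factors consistent with the conventions fixed in Theorem \ref{thm:Spin(7)H} (the sign of $T$, the isomorphism $\cI$, and the chosen Clifford sign convention). The conceptual core is clean and essentially dictates the shape of the answer: $\nabla^T \eta = 0$ reduces the Dirac operator to a computation with $d_A$ and $d^*_A$ on forms, the triviality of the $\mathfrak{spin}(7)$-Clifford action on $\eta$ forces the projection $\pi_7$, and $\iota_\tau T$ appears as essentially the only $\Spin(7)$-equivariant 2-form built linearly from $\tau$ and the torsion.
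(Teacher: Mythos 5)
Your proof proposal takes essentially the same route as the paper's: identify $\tau \leftrightarrow \tau\cdot\eta$, use $\nabla^T\eta=0$ and Leibniz, split the Clifford product into wedge and contraction parts to recover $d_A^*\tau$ and $d_A\tau$ plus a torsion correction $\iota_\tau T$, and then project onto $\Lambda^2_7$ via $S^+=\la\eta\ra\oplus H$. The only (minor) improvement is that you make explicit why only the $\Lambda^2_7$-component survives — namely that $\Lambda^2_{21}=\mathfrak{spin}(7)$ stabilizes, hence annihilates, $\eta$ under Clifford multiplication — a point the paper's proof uses implicitly but does not state.
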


\begin{proof}
The isomorphism between $S^{-}\otimes E$ and $\Lambda^{1}\otimes E$ is given by
 \begin{eqnarray*}
 F\colon \Omega^{1}(E) &\to & \Omega^{0}(S^{-}\otimes E)\, , \nonumber\\
 \tau &\mapsto & \tau\cdot\eta\, .
 \end{eqnarray*}
Therefore, for every $\Theta \in \Omega^{0}(S^{-}\otimes E)$ there exist a unique $\tau \in \Omega^{1}(E)$ 
such that $\Theta = \tau\cdot\eta$. Let $\left\{ e^{i}\right\}$ be a local coframe and let $\left\{ e_{i}\right\}$ be the corresponding local frame
of $TM$. We have	
 \begin{eqnarray}\label{eq:DTT}
   D^{-}_{T}(\Theta) &=& D^{-}_{T}(\tau\cdot\eta) \nonumber\\
 &=& e^{i}\cdot \nabla_{e_{i}} \tau \cdot \eta \nonumber\\
 & =& e^{i}\cdot \nabla_{e_{i}} \tau \cdot \eta \\
 &=& \left(e^{i}\wedge \nabla^{T}_{e_{i}}\tau\right)\cdot\eta -\la e^{i}, \nabla^{T}_{e_{i}}\tau \ra \eta \nonumber \\ 
&=&  d^{\ast}_{A}\tau\cdot\eta + d_{A}\tau\cdot\eta  + \iota_{\tau} T\cdot\eta\, . \nonumber
\end{eqnarray} 
Here we have used that $\nabla^{T}\eta = 0$ as well as
$d_{A}\tau = e^{i}\wedge \nabla_{e_{i}} \tau + \iota_{\tau} T$, and $d^{\ast}_{A}\tau = -\iota_{e_{i}} \nabla_{e_{i}}\tau$. The
latter one needs that $i_{e_i}i_{e_i}i_\tau T=0$, because $T$ is fully antisymmetric.
From equation \eqref{eq:DTT} we finally obtain	
 $$
 D^{-}_{T}(\tau) = d^{\ast}_{A}\tau \oplus \pi_{7}\left(d_{A}\tau + \iota_{\tau} T\right)\, ,
 $$
for every $\tau\in \Omega^{1}(E)$.
\end{proof}

\section{Analytic properties of the group of gauge transformations} \label{sec:connections}

Let  $(M,\Omega)$ be an $8$-dimensional manifold with an $\Spin(7)$-structure $\Omega$ and let $P$ be a principal $\G$-bundle over $M$, where $\G$ is a compact, semi-simple Lie group whose Lie algebra we denote by $\mathfrak{g}$. Associated to $P$ we consider a complex vector bundle $E = P\times_{\rho} \mathbb{C}^{r}$ of rank $r$, where $\rho$ denotes a $r$-dimensional faithful irreducible complex representation of $\G$. We denote by $\frg_{E}\subset \End(E)$ the bundle of endomorphisms of $E$ associated to the adjoint bundle of algebras $\mathrm{ad}(P) = P\times_{\Ad} \frg$ of $P$.

\begin{remark}
We will be mainly interested in the case $\G = \U(r)$, whose Lie algebra we denote by $\mathfrak{u}(r)$. In this case we will denote by $\fru_{E}\subset \End(E)$ the bundle of skew-hermitian endomorphisms of $E$, whereas when necessary we will denote by $\mathfrak{su}_{E}\subset \End(E)$ the bundle of trace-less skew-hermitian endomorphisms of $E$.
\end{remark}

We will denote by $\cA$ the space of $\G$-compatible connections on $E$. For $A\in \cA$, we denote by $F_A\in\Omega^{2}(\frg_{E})$ its curvature. In addition, we denote by $\pi_7:\Lambda^2(M) \to \Lambda^2_7(M)$ and $\pi_{21}:\Lambda^2(M) \to \Lambda^2_{21}(M)$ the orthogonal projections onto the respective summands. 

The group of gauge transformations  $\cG$ is defined as the group of all differentiable automorphisms of $E$ or, equivalently, as the space $\Omega^{0}(\Ad(P))$ of all differentiable sections of the bundle $\Ad(P) = P\times_{\mathrm{Conj}} \G$, where $\G$ acts on itself by conjugation. A third, equivalent, description of $\cG$ is given by
\begin{equation*}
\cG = \Map_{\G}(P, \G)\, ,
\end{equation*}
i.e., by the space of differentiable maps from $P$ to $\G$ which are $\G$-equivariant with respect to the adjoint action of $\G$ on itself. 


\begin{definition}
We define the {\bf reduced gauge group} $\bar{\cG}$ as $\bar{\cG} = \cG/Z(\G)$, where $Z(\G)$ denotes the center of $\G$.
\end{definition}

	



As it has been defined, the gauge group and the reduced gauge group have only the abstract structure of a group, not even a topological group. The gauge group can be made into a topological group by endowing it with the $C^{\infty}$ compact-open topology. However, this is not the topology that we will use in this paper. In order to proceed further, we need to complete $\cG$ and $\cA$ using suitable Sobolev norms. These completions will induce the appropriate topological and metric structures on the corresponding completed spaces.   The following results can be found  in \cite{MitterViallet}.

\begin{itemize}	
	\item We denote by $\Omega^{0}_{s+1}(\End E)$ the Sobolev completion of $\Omega^{0}(\End E)$ with respect to the Sobolev norm $L^2_{s+1}$.  The $L^2_s$-norm of an element $f$ will be denoted $||f||_{s}$.
For $s > \frac{1}{2} \dim(M)$ the Sobolev continuous embedding theorem implies
that  $\Omega^{0}_{s+1}(\End E)\subset C^{0}(\End E)$ is a compact continuous embedding. Point-wise multiplication is well-defined and continuous in $\Omega^{0}_{s+1}(\End E)$. We define ${\cG}_{s+1}$ to be the Sobolev completion of ${\cG}$ respect to the Sobolev norm $L^2_{s+1}$, obtained by considering $\cG$ as a subspace of $\Omega^{0}(\End E)$. Hence $\cG_{s+1}\subset \Omega^{0}_{s+1}(\End E)$ as a closed subspace. 
We give ${\cG}_{s+1}$ the subspace topology induced by $\Omega^{0}_{s+1}(\End E)$.
For $s>\frac{1}{2}\dim\, M$ we have that ${\cG}_{s+1}$ is an infinite-dimensional smooth Hilbert-Lie group with respect to the topology given by the Sobolev norm $L^2_{s+1}$. In a similar way we Sobolev-complete $\bar{\cG}$, obtaining $\bar{\cG}_{s+1}$.	
	\item As an infinite-dimensional Hilbert-Lie group, the Lie algebra $T_{\Id}(\cG_{s+1})$ of $\cG_{s+1}$ can be identified with the Sobolev completion $\Omega^{0}_{s+1}(\frg_{E})$ of $\Omega^{0}(\frg_{E})$ with respect to the Sobolev norm $L^2_{s+1}$. Hence $T_{\Id}(\cG_{s+1})\cong\Omega^{0}_{s+1}(\frg_{E})$. 	
	\item We define $\cA_{s}$ to be the Sobolev completion of $\cA$ with respect to the Sobolev-norm $L^2_{s}$. Fixing a base (smooth) connection
	$A_0 \in{\cA}_{s}$ we can write:
	$$
	{\cA}_{s} = A_0 + \Omega^{1}_{s}(\frg_{E})\, . 
	$$	
	Using Sobolev completions and taking $s > \frac{1}{2}\dim M$, which will assume henceforth, the action of ${\cG}_{s+1}$ on ${\cA}_{s}$ is smooth.
\end{itemize}

\begin{remark}
The previous remarks show that there are compact, continuous, embeddings of $\cG_{s+1}$ and $\cA_{s}$ respectively into the space of continuous sections $C^{0}(\End E)$ of $\End E$ and the space of continuous sections $C^{0}(T^{\ast}M)$ of $T^{\ast}M$. However, for applications to instantons we may need the previous compact, continuous embeddings to be respectively in $C^{2}(\End E)$ and $C^{2}(T^{\ast}M)$. This can be achieved simply by taking $s$ to be large enough, for example $s>\dim(M)$.
\end{remark}

The curvature operator
$$
\cF_s \colon {\cA}_{s}\to \Omega^{2}_{s-1}(\frg_{E})\, , \qquad \cF_s(A)=F_A\, ,
$$	
extends to a smooth, bounded, $\cG_{s+1}$-equivariant map of infinite-dimensional Hilbert-spaces. There is a natural smooth action of $\cG_{s+1}$ on $\cA_{s}$
 \begin{eqnarray*}
\Phi_{s+1}\colon \cG_{s+1}\times \cA_{s} &\to& \cA_{s}\, ,\\
(u,A) &\mapsto& u\cdot A\, .
\end{eqnarray*}
The centre $Z(\G)$ acts trivially on $\cA_s$, so $\bar{\cG}_s=\cG_s/Z(\G)$ also acts smoothly on $\cA_{s}$. Let us fix a point $A \in {\cA}_{s}$ and define $\Phi^{A}_{s+1} := \Phi_{s+1}(-,A)\colon \cG_{s+1}\to \cA_{s}$. The derivative $(d\Phi^{A}_{s+1})|_{\Id}\colon \Omega^{0}_{s}(\frg_{E})\to \Omega^{1}_{s}(\frg_{E})$ of $\Phi^{A}_{s+1}$ at the identity $\Id\in\cG_{s+1}$ is given by
\begin{eqnarray*}
(d\Phi^{A}_{s+1})|_{\Id}\colon \Omega^{0}_{s+1}(\frg_{E}) &\to & \Omega^{1}_{s}(\frg_{E})\, , \\
\tau &\mapsto & -d_{A}\tau\, . 
\end{eqnarray*}

\begin{definition}
We define the following spaces of connections modulo gauge transformations, equipped with the quotient topology
\begin{equation*}
\cB_{s} = {\cA}_{s}/\cG_{s+1}= \cA_{s}/\bar{\cG}_{s+1}\, . 
\end{equation*}
\end{definition}

\begin{remark}
We denote by $\cO_A : = \cG\cdot A$ the orbit of the $\cG_{s+1}$-action on $\cA_{s}$ passing through $A\in \cA_{s}$. 
The tangent space of $\cO_A$ at $A$ is given by
$$
T_{A}\cO_A = \left\{ d_{A} \gamma \, |\,\, \gamma \in \Omega^{0}_{s+1}(\mathfrak{g}_{E}) \right\}\subset \Omega^{1}_{s}(\mathfrak{g}_{E})\, .
$$
\end{remark}
Let $A\in \cA_{s}$. The stabilizer of $A$ is defined as:
 \begin{equation*}
 \Gamma_{A} = \left\{ u\in{\cG}_{s+1}\,\, | \,\, u\cdot A = A \right\} .
 \end{equation*} 
Elements in $\Gamma_{A}$ correspond to covariantly constant automorphisms of $E$. The Lie algebra of $\Gamma_{A}$ is given by
\begin{equation*}
\mathfrak{t}_A=\text{Lie} \, \Gamma_A = \ker (d_{A}\colon \Omega^{0}_{s+1}(\frg_{E})\to \Omega^{1}_{s}(\frg_{E})) \, .
\end{equation*}

We recall the following well-known lemma.

\begin{lemma}
\label{lemma:GammaH}
For any connection ${{A}}\in{\cA}_{s}$, $\Gamma_{A}$ is isomorphic to the centralizer of the holonomy $H_{A}$ in $\G$. In particular, $\Gamma_{A}$ always contains the center $Z(\G)$ of $\G$. 
\end{lemma}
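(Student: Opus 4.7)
The plan is to identify $\Gamma_{A}$ with the centralizer $Z_{\G}(H_{A})$ via evaluation at a basepoint, using the standard dictionary between covariantly constant sections of associated bundles and invariants of the holonomy representation.

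First I would translate the condition $u \cdot A = A$ into a differential one: writing $u \in \cG_{s+1}$ as a $\G$-equivariant map $u \colon P \to \G$, the identity $u \cdot A = A$ is equivalent to $d_{A} u = 0$, that is, $u$ is a covariantly constant section of $\Ad(P) = P \times_{\mathrm{Conj}} \G$ with respect to the connection induced by $A$. (Assume $M$ connected; otherwise work component by component.) Fix a point $p_{0} \in P$ above some $x_{0} \in M$, and take $H_{A} \subset \G$ to be the holonomy group of $A$ based at $p_{0}$.

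Next I would define the evaluation map $\mathrm{ev}_{p_{0}} \colon \Gamma_{A} \to \G$, $u \mapsto u(p_{0})$, and show its image is exactly $Z_{\G}(H_{A})$. For the forward direction, let $u \in \Gamma_{A}$ and let $g \in H_{A}$; then $g$ is realized by a horizontal loop at $p_{0}$, ending at $p_{0} \cdot g$. Since $u$ is constant along horizontal paths, $u(p_{0} \cdot g) = u(p_{0})$, while $\G$-equivariance forces $u(p_{0} \cdot g) = g^{-1} u(p_{0}) g$; comparing yields $u(p_{0}) \in Z_{\G}(H_{A})$. For injectivity, a covariantly constant section is determined by its value at a single point by parallel transport together with $\G$-equivariance, so $\mathrm{ev}_{p_{0}}$ is injective.

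For surjectivity, given $a \in Z_{\G}(H_{A})$ I would construct the corresponding $u$ as follows. For any $p \in P$, choose a horizontal path from $p_{0}$ to $p_{0} \cdot g$ (any $g$) and write $p = p_{0} \cdot g h$ with $h \in \G$, then set $u(p) = h^{-1} a h$. Two such presentations $p = p_{0} \cdot g h = p_{0} \cdot g' h'$ differ by an element $g^{-1} g' \in H_{A}$, and the commutation of $a$ with $H_{A}$ ensures the definition is independent of choices; the resulting $u$ is $\G$-equivariant, horizontal, and lies in $\cG_{s+1}$ by smoothness of parallel transport for connections of Sobolev class $L^{2}_{s}$ with $s > \tfrac{1}{2}\dim M$ (so that $A$ is continuous). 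This gives the isomorphism $\Gamma_{A} \cong Z_{\G}(H_{A})$. The final assertion is immediate: every element of $Z(\G)$ commutes with all of $\G$ and hence with $H_{A}$, so $Z(\G) \subset Z_{\G}(H_{A}) \cong \Gamma_{A}$.

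The only mildly delicate step is the well-definedness of $u$ in the surjectivity construction, which rests squarely on the commutation condition $a \in Z_{\G}(H_{A})$; the rest is bookkeeping with the principal-bundle/associated-bundle formalism.
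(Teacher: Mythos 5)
The paper does not actually supply a proof of this lemma: it is introduced with the sentence ``We recall the following well-known lemma'' and then stated without argument. Your proof therefore fills a gap the authors chose to leave implicit, and it does so with the standard textbook argument (evaluation at a basepoint $p_{0}$, using that a covariantly constant $\G$-equivariant function $u\colon P\to\G$ is constant along horizontal lifts and transforms by conjugation along fibres). The forward inclusion $\mathrm{ev}_{p_{0}}(\Gamma_A)\subset Z_{\G}(H_A)$, injectivity, and the reconstruction of $u$ from $a\in Z_{\G}(H_A)$ are all correctly argued, and you correctly isolate the one nontrivial check (well-definedness of $u$, which is precisely where $a\in Z_{\G}(H_A)$ is used). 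So the proof is correct and there is nothing in the paper to compare it against at the level of mechanism.

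One remark on rigor, since you flag it yourself as the ``mildly delicate step'' in the wrong place: the genuinely delicate point in the Sobolev setting is not the well-definedness of $u$ (purely algebraic) but your appeal to ``smoothness of parallel transport'' to conclude $u\in\cG_{s+1}$ from a connection $A$ that is merely $L^{2}_{s}$. Continuity of $A$ alone does not immediately give the required Sobolev regularity of $u$. The cleaner route, consistent with how the paper handles exactly this kind of issue elsewhere, is to observe that any $u\in\cG_{0}$ solving $d_{A}u=0$ weakly is automatically in $\cG_{s+1}$ by the bootstrap used in Lemma~\ref{lemma:s1} (with $\tau_{1}=\tau_{2}=0$), or equivalently the argument of Lemma~\ref{lemma:dA} showing $\ker d_{A}$ consists of regular sections. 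Since the $u$ you build by parallel transport is bounded (valued in the compact group $\G$) and hence $L^{2}$, and satisfies $d_{A}u=0$ by construction, that bootstrap applies and closes the loop. With that substitution your proof is complete and correct.
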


\begin{remark}
For $G = \SU(r)$ we have $Z(\SU(r)) = \ZZ_{r}$, and for $G=\U(r)$, we have that $Z(\U(r))=\U(1)$, the subgroup of diagonal matrices. 
\end{remark}

\begin{definition}\label{def:reducible}
We say that a connection $A$ is irreducible if the holonomy of $A$ is is equal to the structure group of $P$, i.e., $H_A = \G$. We say it is reducible otherwise.
\end{definition}

If a connection $A$ is reducible, then the holonomy $H_A\subset \G$ is strictly contained in $\G$. Therefore the holonomy Lie algebra $\mathfrak{h}_A\subset \frg$ is strictly contained, and $F_A\in \Omega^2((\mathfrak{h}_A)_{E})$.

\begin{prop} \label{prop:reducible} Let $A\in \cA_{s}$. If $A$ is irreducible, then $\Gamma_A=Z(\G)$ and the kernel of
$d_{{A}}\colon \Omega^{0}_{s+1}(\frg_{E})\to \Omega^{1}_{s}(\frg_{E})$ 
consists of covariantly constant sections. 
\end{prop}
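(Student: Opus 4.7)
The plan has two parts, matching the two assertions of the proposition. For the identification $\Gamma_A = Z(\G)$, I would simply invoke Lemma \ref{lemma:GammaH}: it gives $\Gamma_A \cong C_{\G}(H_A)$, the centralizer of the holonomy in $\G$. By Definition \ref{def:reducible}, irreducibility of $A$ means $H_A = \G$, so this centralizer is the centralizer of $\G$ in itself, namely $Z(\G)$. Nothing analytic is needed here; the irreducibility hypothesis is used through the definition alone.

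For the second assertion, the strategy is to first upgrade regularity and then apply the holonomy principle. Starting from $\gamma \in \Omega^{0}_{s+1}(\frg_E)$ with $d_A \gamma = 0$, I would apply $d_A^*$ to deduce $\Delta_A \gamma = d_A^* d_A \gamma = 0$, where $\Delta_A$ is a second-order elliptic operator of Laplace type on $\Omega^0(\frg_E)$ (with $A$ of class $L^2_s$ for $s > \tfrac12 \dim M$, the coefficient estimates and standard Sobolev bootstrapping apply). Elliptic regularity then yields that $\gamma$ is smooth, and hence the equation $d_A \gamma = 0$ holds classically. This is precisely the statement that $\gamma$ is a $\nabla_A$-parallel (covariantly constant) section of $\frg_E$, which is the content of the second claim.

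To make the link to the first part explicit, I would then observe via the holonomy principle that a covariantly constant section of $\frg_E$ is determined by its value at any base point $p \in M$, and that value must lie in the subspace of $\frg$ fixed by the adjoint action of the holonomy group $H_A$. For irreducible $A$, this fixed subspace equals $\frg^{\G} = \mathrm{Lie}\,Z(\G)$, which is consistent with the identification $\mathfrak{t}_A = \mathrm{Lie}\,\Gamma_A = \mathrm{Lie}\,Z(\G)$ obtained by differentiating the first statement.

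The main obstacle is really the elliptic regularity step, since a priori $\gamma$ is only in a Sobolev class and $A$ itself is of limited regularity; however, the bootstrap goes through because the leading symbol of $d_A^* d_A$ is independent of $A$ and our assumption $s > \tfrac12 \dim M$ gives $A \in C^0$ (and higher, for $s$ sufficiently large), so that multiplication in the lower-order terms preserves the Sobolev scale. Once smoothness of $\gamma$ is secured, the geometric statement is immediate.
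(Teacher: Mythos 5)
Your proof is correct and its first half is identical to the paper's: invoke Lemma~\ref{lemma:GammaH} to get $\Gamma_A\cong C_{\G}(H_A)$, then use $H_A=\G$ (Definition~\ref{def:reducible}) to conclude $\Gamma_A=Z(\G)$. For the second assertion the paper is terser: it simply notes that $\mathfrak{t}_A=\mathrm{Lie}\,\Gamma_A=\ker d_A$, and since $\Gamma_A=Z(\G)$ this kernel is the Lie algebra of the center, i.e., the constant central sections. Your version reaches the same conclusion by a slightly longer route --- you redo the elliptic bootstrap for smoothness of elements of $\ker d_A$, which is already supplied by Lemma~\ref{lemma:dA}, and then apply the holonomy principle to identify the value at a base point with a vector in $\frg^{H_A}=\frg^{\G}=\mathrm{Lie}\,Z(\G)$. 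The holonomy-principle argument is the more explicit and self-contained of the two; the paper's version is shorter precisely because it leans on the already-established isomorphism $\Gamma_A=Z(\G)$. Both are valid; the only caveat is that the phrase ``covariantly constant'' in the statement is tautological for $\ker d_A$, and it is your third paragraph (and the paper's Lie-algebra identification), not the regularity bootstrap alone, that captures the nontrivial content, namely that these sections take values in $\zeta(\frg)$.
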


\begin{proof}
If $A$ is irreducible, then $H_A = \G$, so $\Gamma_A$ is the centralizer of $\G$, hence $\Gamma_A=Z(\G)$. 
The Lie algebra of $\Gamma_A$ is $\mathfrak{t}_A$, the kernel of $d_A$, hence $\mathfrak{t}_A$ consists only of the constant sections.
%
\end{proof}

Note that definition \ref{def:reducible} is a more restrictive definition that than used in other instances. 
If there is a splitting $E=E_1\oplus E_2$ with $A=A_1\oplus A_2$, where $E_1$ is a rank $k$ bundle
and $E_2$ is a rank $(r-k)$-bundle, then the holonomy is contained in the
subgroup $\mathrm{S}(\U(k)\x \U(r-k))<\SU(r)$ and hence $A$ is reducible. 
There are situations in which we have a converse statement. For instance, for $M$ simply-connected and $\G=\SU(r),\U(r)$
with $r\leq 3$, if $A$ is reducible, then there is a splitting $E=E_1\oplus E_2$. Certainly, suppose that 
$H_A\neq \G$. By simply-connectivity of $M$, $H_A$ is connected. Being a subgroup of $\U(r)$ with $r\leq 3$, it must be 
conjugated to a subgroup of some $\U(k)\x \U(r-k)$, $0< k < r$. This implies that the bundle and the connection split as indicated. 

\medskip

We denote by $\cA^*_{s} \subset \cA_{s}$ the subspace of irreducible connections on $E$, which is dense and open in $\cA_{s}$.

\begin{definition}
We define the space of irreducible connections modulo gauge transformations, equipped with the quotient topology
\begin{equation*}
\cB^{\ast}_{s} = {\cA}^{\ast}_{s}/\cG_{s+1} = \cA^{\ast}_{s}/\bar{\cG}_{s+1} \subset \cB_s\, .
\end{equation*}
\end{definition}

\begin{remark}
The reduced gauge group $\bar{\cG}_{s + 1}$ acts freely on $\cA^{\ast}_s$. 
\end{remark}

We define the following canonical projections
\begin{equation*}
\pi\colon \cA_{s+1} \to \cB_{s}\, , \qquad \pi \colon \cA^{\ast}_{s+1} \to \cB^{\ast}_{s}\, , 
\end{equation*}
We proceed now to analyze the local structure of $\mathcal{B}_{s}$ following references \cite{AtiyahHitchin,AtiyahBott,DKbook,FriedmanMorgan,BlaineL}.

\begin{lemma}
\label{lemma:dA}
For any $s \geq 0$, the map $d_{A}\colon \Omega^{0}_{s+1}(\frg_{E})\to \Omega^{1}_{s}(\frg_{E})$ has finite-dimensional kernel and closed range. The kernel of $d_{A}$ consists of $C^{\infty}$-sections of $\frg_{E}$ and its dimension satisfies
\begin{equation*}
\dim\, \ker d_{A} \leq \rk \, \frg_{E}\, .
\end{equation*}
Furthermore, there exists a constant $c_{s+1}$ such that
\begin{equation}
\label{eq:cu}
\norm{\psi}_{s+1} \leq c_{s+1} \norm{d_{A}\psi}_{s}\, , 
\end{equation}
for all $\psi \perp \ker d_{A}$.
\end{lemma}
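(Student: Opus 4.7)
The plan is to treat $d_A$ as an overdetermined elliptic operator of order one. First I would observe that the principal symbol of $d_A \co \Omega^0(\frg_E) \to \Omega^1(\frg_E)$ at a cotangent vector $\xi \neq 0$ is $\psi \mapsto \xi \otimes \psi$, which is injective. Equivalently, the composition $d_A^{\ast} d_A \co \Omega^0(\frg_E) \to \Omega^0(\frg_E)$ is the connection Laplacian on $\frg_E$, a second-order elliptic operator on the closed manifold $M$. Standard elliptic theory then gives that $\ker(d_A^{\ast} d_A) = \ker d_A$ on $L^2_{s+1}$ is finite-dimensional and consists of $C^{\infty}$-sections of $\frg_E$.

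Next I would establish the dimension bound. Every $\psi \in \ker d_A$ is covariantly constant, i.e.\ $\nabla_A \psi = 0$. Hence for any fixed basepoint $p \in M$ the evaluation map $\ker d_A \to (\frg_E)_p$, $\psi \mapsto \psi(p)$, is injective (parallel transport along any path determines $\psi$ everywhere from $\psi(p)$). This immediately yields $\dim \ker d_A \leq \dim (\frg_E)_p = \rk \frg_E$, proving the claimed bound.

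For the estimate \eqref{eq:cu} and closed range, the starting point is the G\aa rding inequality for the overdetermined elliptic operator $d_A$ on the closed manifold $M$:
$$
\norm{\psi}_{s+1} \leq C \bigl( \norm{d_A \psi}_s + \norm{\psi}_s \bigr), \qquad \psi \in \Omega^0_{s+1}(\frg_E).
$$
To upgrade this to \eqref{eq:cu} I would argue by contradiction. Suppose there is no constant $c_{s+1}$ such that the estimate holds on $(\ker d_A)^\perp$; then one finds a sequence $\psi_n \perp \ker d_A$ with $\norm{\psi_n}_{s+1} = 1$ and $\norm{d_A \psi_n}_s \to 0$. The compact embedding $L^2_{s+1} \hookrightarrow L^2_s$ yields a subsequence converging in $L^2_s$; feeding this back into the G\aa rding estimate applied to differences $\psi_n - \psi_m$ shows the subsequence is Cauchy in $L^2_{s+1}$. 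The limit $\psi_\infty$ satisfies $d_A \psi_\infty = 0$, lies in $(\ker d_A)^\perp$, and has $\norm{\psi_\infty}_{s+1} = 1$, a contradiction. The closed range of $d_A$ then follows from \eqref{eq:cu} by a standard argument: if $d_A \psi_n \to \eta$ in $L^2_s$, the Cauchy property of $\psi_n$ modulo $\ker d_A$ yields $\eta = d_A \psi_\infty \in \img d_A$.

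There is no real obstacle here beyond bookkeeping: the assertion is essentially a repackaging of elliptic regularity, the G\aa rding inequality, and the compact Sobolev embedding on the closed manifold $M$. The point that most deserves care is the passage from the weak G\aa rding estimate (which involves $\norm{\psi}_s$ on the right-hand side) to the sharper bound \eqref{eq:cu} involving only $\norm{d_A \psi}_s$; this is precisely where compactness of $L^2_{s+1} \hookrightarrow L^2_s$ and the finite-dimensionality of $\ker d_A$ enter in tandem.
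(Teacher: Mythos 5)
Your argument is correct, and it is a standard and robust way to prove the lemma; however, it takes a genuinely different route from the paper's. The paper exploits the identity $\norm{d_A\phi}_s^2 + \norm{\phi}_0^2 = \norm{\phi}_{s+1}^2$ (a recursive definition of the $L^2_{s+1}$-norm built from the connection $d_A$) to collapse the estimate \eqref{eq:cu} all the way down to the $L^2$-inequality $\norm{\psi}_0^2 \leq c\,\norm{d_A\psi}_0^2$ on $(\ker d_A)^\perp$, and then observes that the optimal $c^{-1}$ is the first positive eigenvalue of the connection Laplacian $\Delta_A = d_A^* d_A$, which is strictly positive because $\Delta_A$ is elliptic on the closed manifold $M$ and has discrete spectrum. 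Your approach instead uses the G\aa rding inequality for the overdetermined elliptic operator $d_A$ together with the Rellich compact embedding $L^2_{s+1}\hookrightarrow L^2_s$ and a contradiction argument (Peetre's lemma). The paper's method is somewhat more direct and gives a precise identification of the constant $c^{-1}$ as a spectral gap, but it silently relies on the particular choice of Sobolev norms making the recursive identity exact; yours is norm-agnostic, works for any Sobolev norm equivalent to the standard one, and is the textbook mechanism one would reach for in the absence of such a convenient identity. The dimension bound and smoothness of $\ker d_A$ are handled the same way in both (parallel transport for the former, bootstrapping $\nabla_A^{k}\phi=0$ or equivalently $\Delta_A\phi=0$ for the latter), and the closed-range conclusion from the lower bound is the same standard step.
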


\begin{proof}
Elements $\phi\in \ker d_{A}$ are sections of a vector bundle parallel with respect to the connection $d_{A}$. 
Therefore, they are completely specified by their vale at one point and hence we obtain $\dim\, \ker d_{A} \leq \rk\, \frg_{E}$.
Moreover $(\nabla_A)^{k+1} \phi = 0$ for all $k\geq 0$ and hence $\phi$ is $C^{\infty}$ by the Sobolev embedding theorem. 

Using now the identity $\norm{d_{A}\phi}^{2}_{s} + \norm{\phi}^{2}_{0} = \norm{\phi}^{2}_{s+1}$ for all $\phi\in \Omega^{0}_{s+1}(\frg_{E})$, 
equation \eqref{eq:cu} is equivalent to:
\begin{equation}
\label{eq:equivor}
\norm{\psi}^{2}_{0} \leq c \norm{d_{A}\psi}^{2}_{0}\, ,
\end{equation}
for some constant $c>0$ and for all $\psi \perp \ker\, d_{A}$. We can now prove equation \eqref{eq:equivor} by taking
\begin{equation*}
c^{-1} =\inf \left\{ \left( \frac{\norm{ d_{A}\psi}^{2}_{0}}{\norm{ \psi}^{2}_{0}}\right)\, \, |\, \, \psi \perp \ker\, d_{A}\right\}\, .
\end{equation*}
In order to see that the constant $c^{-1}$ given above is well-defined we just have to consider the following equalities
 $$
  \la\Delta_A \psi,\psi\ra_0 = \la d^{\ast}_{A} d_{A}\psi,\psi \ra_0 
= \la d_{A}\psi, d_{A}\psi\ra_0= \norm{d_{A}\psi}^{2}_{0}\, .
 $$
Hence, $c^{-1}$ is just the first non-zero eigenvalue of $\Delta_A$. Since $\Delta_A$ is elliptic and $M$ is closed, 
the spectrum of $\Delta_A$ is discrete and $\lambda >0$. We conclude that $d_{A}\colon \Omega^{0}_{s+1}(\frg_{E}) 
\to \Omega^{1}_{s}(\frg_{E})$ is bounded from below in the orthogonal of its null space and hence it has closed range. 
Alternatively, the fact that $d_{A}\colon \Omega^{0}_{s+1}(\frg_{E}) \to \Omega^{1}_{s}(\frg_{E})$ has closed range 
follows from the orthogonal decomposition given in equation \eqref{eq:orthsplit}.
\end{proof}

\begin{lemma}
\label{lemma:s1}
Let us assume that two connections $A_{1}, A_{2}\in \cA_{s}$ are equivalent by a gauge transformation 
$u\in \cG_{0}$. Then $u\in\cG_{s+1}$.
\end{lemma}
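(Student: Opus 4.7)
The plan is a standard bootstrap on the gauge-equivalence equation. Since the statement is purely about the regularity of $u$, it is a local question: passing to a finite cover of $M$ by trivializations of $P$, in each chart the gauge transformation $u$ becomes a $\G$-valued map and $A_1, A_2$ become $\frg$-valued $1$-forms satisfying $A_2 = u A_1 u^{-1} - (du) u^{-1}$. Rearranging, I obtain the first-order identity
\[
du = u A_1 - A_2 u,
\]
which expresses $du$ purely in terms of $u$ and the $A_i$. Because $\G$ is compact, $u$ automatically takes values in a bounded subset of the matrix algebra, so $u \in L^{\infty}(M) \subset L^{2}_{0}(\End E)$ is available as the base regularity.

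I would then induct on $k$: suppose $u \in L^{2}_{k}(\End E)$ for some $0 \leq k \leq s$. Since $s > \tfrac{1}{2}\dim M$, the Sobolev multiplication theorem on the $8$-manifold $M$ gives a continuous bilinear map $L^{2}_{s} \times L^{2}_{k} \to L^{2}_{k}$ for every $0 \leq k \leq s$; at $k=0$ one alternatively combines $u \in L^{\infty}$ with the continuous embedding $L^{2}_{s} \hookrightarrow L^{\infty}$, so that multiplication by $u$ preserves $L^{2}$. Applied to $A_1, A_2 \in \Omega^{1}_{s}(\frg_E)$, this yields $u A_1$ and $A_2 u$ in $L^{2}_{k}(\Lambda^{1} \otimes \End E)$, whence the displayed identity forces $du \in L^{2}_{k}$ and so $u \in L^{2}_{k+1}$. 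Iterating this step $s+1$ times, starting from $u \in L^{2}_{0}$, produces $u \in L^{2}_{s+1}(\End E)$, which is precisely $u \in \cG_{s+1}$.

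The bulk of the work is in the base case of the induction. There the two factors $u$ and $A_i$ sit in different function spaces ($L^{\infty}$ and $L^{2}_{s}$, respectively) and one must appeal carefully to the Sobolev embedding $L^{2}_{s} \hookrightarrow L^{\infty}$ (which is legitimate because $s > \tfrac{1}{2}\dim M$) in order to interpret $uA_1$ and $A_2 u$ as honest $L^{2}$ sections. Once this input is in place the bootstrap is essentially formal; the patching between trivializations contributes no new difficulty because transition functions and partition-of-unity cut-offs are smooth, and the product estimates in each chart immediately globalize. The key conceptual fact driving the argument is simply that compactness of $\G$ promotes $u$ from $L^{2}$ to $L^{\infty}$ for free, so the inductive machine can be primed at $k=0$.
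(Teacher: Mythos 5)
Your proof is correct and follows essentially the same bootstrap as the paper's: the paper works globally, writing $A_a = A + \tau_a$ against a fixed smooth background connection $A$ and bootstrapping the identity $d_A u = \tau_2 u - u\tau_1$ through the Sobolev multiplication $L^2_s \otimes L^2_k \to L^2_k$, whereas you localize to trivializing charts and bootstrap $du = uA_1 - A_2 u$; these are cosmetically different packagings of the identical argument. Your extra observation that compactness of $\G$ already forces $u \in L^\infty$ is sound (and in fact is exactly the input the paper uses one lemma later for the convergence statement), but here it is not strictly needed since $u \in \cG_0$ already gives $u \in L^2$, which is all the base case requires.
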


\begin{proof}
Let us write $A_{a} = A + \tau_{a}$. Equation $A_{1} = u \cdot A_{2}$ implies
 \begin{equation*}
 d_{A} u  = \tau_{2} u - u \tau_{1}\, ,
 \end{equation*}
where the derivatives are understood in the weak sense. Note that $u\in \cG_0$ means that $u$ is $L^2$.
Taking now the $L^{2}$ norm in the above equation,
	\begin{equation*}
\norm{d_{A} u}_{0} \leq \norm{\tau_{2} u}_{0} + \norm{u\tau_{1}}_{0} \leq c \left(\norm{\tau_{2}}_{s} \norm{u}_{0} + \norm{u}_{0} \norm{\tau_{1}}_{s}\right)\, ,
\end{equation*}
where $c>0$ is a positive constant. Here we have used the Sobolev multiplication theorem, which in our particular case gives the appropriate estimate for pointwise multiplication and states that $L^2_{s}\otimes L^2_{i} \to L^2_{i}$ is a continuous bilinear map for $0\leq i \leq s$ provided that $s>\frac{1}{2} \dim(M)$ (which we assume always). 
As $\norm{u}_{0}<\infty$ and $\norm{\tau_{a}}_{s} <\infty$ by assumption, we conclude
\begin{equation*}
\norm{d_{A} u}_{0} <\infty\, .
\end{equation*}
Hence $u \in L^2_{1}$. Iteratively repeating the previous argument, we arrive to the last step
\begin{equation*}
\norm{d_{A} u}_{s} \leq \norm{\tau_{2} u}_{s} + \norm{u\tau_{1}}_{s} \leq c \left(\norm{\tau_{2}}_{s} \norm{u}_{s} + \norm{u}_{s} \norm{\tau_{1}}_{s}\right)\, ,
\end{equation*}
which implies $\norm{d_{A} u}_{s} <\infty$ and hence $u\in \cG_{s+1}$.
\end{proof}

\begin{lemma}
\label{lemma:convergence}
Let $\left\{A^{n}_{1}\right\}$ and $\left\{A^{n}_{2}\right\}$ be sequences of points in $\cA_{s}$ that respectively converge to connections $A_{1}, A_{2}\in \cA_{s}$. Let us assume that $\left\{u_{n}\right\}$ is a sequence in 
$\cG_{0}$ such that
\begin{equation*}
A^{n}_{1} = u_{n}\cdot A^{n}_{2}\, , \qquad \forall\, n\in \mathbb{N}\, .
\end{equation*}
Then $\left\{u_{n}\right\}\subset \cG_{s+1}$. Furthermore, after perhaps passing to a subsequence, $\left\{u_{n}\right\}$ converges to an element $u\in\cG_{s+1}$ satisfying $A_{1} = u\cdot A_{2}$.
\end{lemma}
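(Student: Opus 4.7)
The plan is to combine the regularity bootstrap of Lemma \ref{lemma:s1} with uniform Sobolev estimates on $\{u_n\}$, then invoke Rellich compactness to extract a limit, and finally upgrade the mode of convergence to $L^2_{s+1}$ through the gauge equation. The first assertion is immediate: Lemma \ref{lemma:s1} applied to each triple $(A_1^n, A_2^n, u_n)$ places $u_n\in\cG_{s+1}$ for every $n$.

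For the limit extraction I need uniform $L^2_{s+1}$ bounds on $\{u_n\}$. I would fix a smooth reference connection $A_0$ and write $A_a^n = A_0 + \tau_a^n$ with $\tau_a^n\to\tau_a$ in $L^2_s$ (so the sequences $\{\tau_a^n\}$ are uniformly bounded in $L^2_s$). The intertwining condition then reads $d_{A_0} u_n = \tau_2^n u_n - u_n \tau_1^n$. Compactness of $\G$ gives a uniform pointwise bound on $u_n$ in the faithful representation on $\End E$, hence a uniform $L^\infty$, and therefore $L^2_0$, bound. I then bootstrap using the Sobolev multiplication $L^2_s \otimes L^2_k \to L^2_k$ for $0 \leq k \leq s$ (available since $s > \tfrac{1}{2}\dim M$): assuming inductively a uniform bound on $\|u_n\|_k$, the PDE above yields a uniform bound on $\|d_{A_0} u_n\|_k$, which combined with the $L^2_0$ bound and the identity $\|u\|_{k+1}^2 \sim \|d_{A_0} u\|_k^2 + \|u\|_0^2$ gives a uniform bound on $\|u_n\|_{k+1}$. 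Iterating from $k=0$ up to $k=s$ yields $\sup_n \|u_n\|_{s+1} < \infty$.

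With this uniform bound secured, Rellich--Kondrachov furnishes a subsequence with $u_n \to u$ strongly in $L^2_s$ and in $C^0$ (and weakly in $L^2_{s+1}$). The $C^0$-limit of $\G$-valued maps is again $\G$-valued, so $u \in \cG_0$; passing to the limit in $A_1^n = u_n \cdot A_2^n$ gives $A_1 = u \cdot A_2$ distributionally, and a second application of Lemma \ref{lemma:s1} upgrades $u$ to $\cG_{s+1}$. To strengthen convergence from weak to strong in $L^2_{s+1}$, I would use the difference equation $d_{A_0}(u_n - u) = (\tau_2^n u_n - \tau_2 u) - (u_n \tau_1^n - u \tau_1)$; each grouping on the right tends to zero in $L^2_s$ by Sobolev multiplication, combining $\tau_a^n \to \tau_a$ in $L^2_s$ with $u_n \to u$ in $L^2_s \cap C^0$. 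Hence $\|d_{A_0}(u_n - u)\|_s \to 0$, which together with the strong $L^2_s$-convergence of $u_n$ upgrades to $u_n \to u$ in $L^2_{s+1}$.

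The main obstacle is the uniform bootstrap of the second paragraph: the Sobolev multiplication constants must be $n$-independent, and the induction can only start because compactness of $\G$ provides the free $L^\infty$ estimate with no prior hypothesis on the sequence. Once that uniform $L^2_{s+1}$ bound is in place, the remaining ingredients --- Rellich compactness, continuity of Sobolev multiplication, and a second invocation of Lemma \ref{lemma:s1} --- are essentially routine.
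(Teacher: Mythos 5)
Your argument is correct and follows essentially the same route as the paper's own proof: Lemma~\ref{lemma:s1} for regularity of each $u_n$, the compactness of $\G$ for the seed $L^\infty$ (hence $L^2_0$) bound, the Sobolev-multiplication bootstrap $\|u_n\|_{k+1}\lesssim \|d_{A_0}u_n\|_k+\|u_n\|_0$ to get uniform $L^2_{s+1}$ bounds, compact embedding to extract a strongly $L^2_s$-convergent subsequence, and the gauge equation once more to upgrade the convergence to $L^2_{s+1}$. The only cosmetic difference is how you place the limit $u$ in $\cG_{s+1}$ (a second invocation of Lemma~\ref{lemma:s1} on the limit equation, rather than the paper's appeal to $\cG_{s+1}$ being a closed subspace together with the pointwise constraint coming from $C^0$ convergence), and this is an equally valid and if anything slightly cleaner way to close that step.
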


\begin{proof}
The fact that $u_{n}$ is in $\cG_{s+1}$ for all $n\in\mathbb{N}$ follows directly from lemma \ref{lemma:s1}. We will assume then that  $\left\{u_{n}\right\}\subset \cG_{s+1}$. Let us write $A^{n}_{a} = A + \tau^{n}_{a}$, $a = 1, 2$. The sequences $\left\{\tau^{n}_{a}\right\}$ converge to $\tau_{a}\in \Omega^{1}_{s}(\frg_{E})$ in $L^2_{s}$. Equation $A^{n}_{1} = u_{n}\cdot A^{n}_{2}$ implies
 $$
 d_{A} u_{n} = \tau^{n}_{2} u_{n} - u_{n} \tau^{n}_{1}\, .
 $$
The sequences $\left\{\tau^{n}_{1}\right\}$ and $\left\{\tau^{n}_{2}\right\}$ converge in the $L^2_{s}$-norm, and hence they are uniformly bounded in $\Omega^{1}_{s}(\frg_{E})$. Now, let us consider each $u_{n}$ as a $\G$-equivariant function on $P$ and taking values on $\G\subset \mathrm{Mat}(r,\mathbb{C})$, which is a compact subspace of the vector space of square $r\times r$ complex matrices $\mathrm{Mat}(r,\mathbb{C})$. 
Therefore $\left\{ u_{n}\right\}$ is uniformly bounded in the $L^{\infty}$-norm.
The uniform bound of $\left\{ u_{n}\right\}$ in the $L^{\infty}$-norm implies a uniform bound in the $L^{2}$-norm by compactness of $M$.
The Sobolev multiplication theorem implies now that $L^2_{s}\otimes L^2_{i} \to L^2_{i}$, $i = 0,\hdots, s$, 
is a continuous bilinear map and in addition gives us the following estimate
\begin{equation*}
\norm{d_{A}u_{n}}_{0} = \norm{\tau^{n}_{2} u_{n} - u_{n} \tau^{n}_{1}}_{0}\leq  c \left(\norm{\tau^{n}_{2}}_{s} \norm{u_{n}}_{0} + \norm{u_{n}}_{0} \norm{\tau^{n}_{1}}_{s}\right)\, ,
\end{equation*}
for an appropriate constant $c>0$. Hence, we conclude that $\left\{d_{A}u_{n}\right\}$ is uniformly bounded with respect to $L^{2}$, implying that $\left\{u_{n}\right\}$ is uniformly bounded in $L^2_{1}$. Iteratively repeating this process, we arrive to the last step
\begin{equation*}
\norm{d_{A}u_{n}}_{s} = \norm{\tau^{n}_{2} u_{n} - u_{n} \tau^{n}_{1}}_{s}\leq  c \left(\norm{\tau^{n}_{2}}_{s} \norm{u_{n}}_{s} + \norm{u_{n}}_{s} \norm{\tau^{n}_{1}}_{s}\right)\, ,
\end{equation*}
for a constant $c>0$.  Hence, we conclude that $\left\{d_{A}u_{n}\right\}$ is uniformly bounded with respect to $L^2_{s}$, implying that $\left\{u_{n}\right\}$ is uniformly bounded in $L^2_{s+1}$. Hence, perhaps after passing to a subsequence, $\left\{ u_{n}\right\}$ weakly converges in the $L^2_{s+1}$-norm. Since $\left\{ u_{n}\right\}\subset \cG_{s+1}$ and $\cG_{s+1}$ is a closed subspace of $\Omega^{0}_{s}(\End(E))$ the weak limit $u$ of $\left\{ u_{n}\right\}$ is in $\cG_{s+1}$. We want to prove now that in fact $\left\{ u_{n}\right\}$ strongly converges to $u$. By strict inequality of the Sobolev embedding theorems, we obtain that the embedding
\begin{equation*}
L^{2}_{s+1}\hookrightarrow L^{2}_{s}\, ,
\end{equation*}
is a compact map. Hence, 
$\left\{ u_{n}\right\}$ strongly converges in $L^2_{s}$. Using now that the Sobolev multiplication theorem implies that 
$L^2_{s}\otimes L^2_{s} \to L^2_{s}$ is continuous we conclude that $\left\{ \tau^{n}_{2} u_{n} - u_{n} \tau^{n}_{1}\right\}$ and hence $\left\{ d_{A}u\right\} $ strongly converges in $L^2_{s}$, whence $\left\{ u_{n}\right\}$ strongly converges in 
$L^2_{s+1}$. Furthermore, by uniqueness of limits we obtain
\begin{equation*}
d_{A} u = \tau_{2} u - u \tau_{1}\, .
\end{equation*}  

We have proven that, perhaps after passing to a subsequence, $\left\{u_{n}\right\}$ converges to $u$ in $L^2_{s+1}$ and in particular, $u\in \cG_{s+1}$. Finally, from $d_{A} u = \tau_{2} u - u \tau_{1}$ follows that $A_{1} = u\cdot A_{2}$ and we conclude.
\end{proof}

Let $A\in\cA_{s}$. We define
  \begin{equation}\label{eq:Tnabla}
T_{{{A}},\epsilon} := \left\{ A + \tau\, ,\, \tau \in \Omega^{1}_{s} (\frg_{E}) \,\, |\,\, d^{\ast}_{A} \tau = 0\, , \,\, 
  \norm{\tau}_{s} < \epsilon\right\}\subset \cA_{s}\, .
 \end{equation}

\begin{lemma} \label{lemma:preservestab}
For each $A\in\cA_{s}$, the stabilizer subgroup $\Gamma_{A}\subset\cG_{s+1}$ fixes $T_{A,\epsilon}$.
\end{lemma}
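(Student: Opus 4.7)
The plan is to fix $u \in \Gamma_A$ and $A' = A + \tau \in T_{A,\epsilon}$, and verify that $u \cdot A'$ is again of the form $A + \tau'$ with $\tau' \in \Omega^1_s(\frg_E)$, $d^*_A \tau' = 0$, and $\norm{\tau'}_s < \epsilon$. The starting observation is that the gauge action on connections is affine: writing the action in the standard form, we have $u \cdot A' = u \cdot A + \Ad(u)\tau = A + \Ad(u)\tau$ since $u \in \Gamma_A$ means $u \cdot A = A$. Thus the candidate perturbation is $\tau' = \Ad(u)\tau$, and it remains to verify the two conditions.

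The key input is that membership in $\Gamma_A$ is equivalent to $d_A u = 0$, i.e.\ $u$ is a $\nabla_A$-parallel section of $\Aut(P) \subset \End(E)$. By the Leibniz rule this implies that the covariant derivative $\nabla_A$ commutes with the pointwise action of $\Ad(u)$ on $\frg_E$-valued forms, and consequently so do both $d_A$ and its formal adjoint $d^*_A$. In particular
\[
d^*_A(\Ad(u)\tau) = \Ad(u)\, d^*_A \tau = 0,
\]
so $\tau'$ lies in the slice determined by the Coulomb condition.

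Finally I would check that $\norm{\Ad(u)\tau}_s = \norm{\tau}_s$. Pointwise, $\Ad(u)$ acts on the fibres of $\frg_E$ as an element of the compact group $\G$ via its adjoint representation, which preserves a bi-invariant inner product on $\frg$, so $\Ad(u)$ is a fibrewise isometry. For the higher-order Sobolev terms, the intertwining relation $\nabla_A^k (\Ad(u)\tau) = \Ad(u)\,\nabla_A^k \tau$ (coming again from $\nabla_A u = 0$) together with the pointwise isometry property gives equality of every $L^2_s$-component, hence $\norm{\tau'}_s = \norm{\tau}_s < \epsilon$. Combining these shows $u \cdot A' \in T_{A,\epsilon}$, as required. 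There is no real obstacle here; the entire content is that covariant constancy of $u$ promotes $\Ad(u)$ to an operator that commutes with $\nabla_A$ and preserves all relevant norms.
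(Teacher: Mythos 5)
Your argument is correct and takes essentially the same route as the paper's (very terse) proof: invariance of $A$ under $\Gamma_A$, the identity $d^*_A(u\cdot\tau)=u\cdot d^*_A\tau$, and $\Spin$-norm preservation $\norm{u\cdot\tau}_s=\norm{\tau}_s$. You merely spell out the justifications (affine form of the gauge action, $d_A u=0$, and the $\nabla_A$-intertwining giving the Sobolev isometry) that the paper leaves implicit.
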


\begin{proof}
Follows from invariance of $A$ under $\Gamma_{A}$ together with the fact that, for all $u\in \Gamma_{A}$, we have $d^{\ast}_{A} (u\cdot \tau) = u\cdot d^{\ast}_{A}\tau$ and
\begin{equation*}
\norm{u\cdot \tau }_{s} = \norm{u \tau u^{-1}}_{s} = \norm{\tau}_{s}<\epsilon.
\end{equation*}
\end{proof}

\begin{lemma}
\label{lemma:A1A2}
Let $A\in \cA_s$. There exists a $\epsilon >0$ such that if $A_{1}, A_{2} \in T_{A,\epsilon}$ are connections equivalent by a gauge transformation $u\in \cG_{s+1}$ with $\norm{u-\Id}_{s+1} <\epsilon $, then $A_{1} = \gamma\cdot A_{2}$ for some $\gamma \in \Gamma_{A} \subset \cG_{s+1}$.
\end{lemma}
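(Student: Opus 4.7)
The strategy is a slice-theorem argument: I construct a local parametrization of a neighborhood of $A$ in $\cA_s$ by pairs consisting of a small gauge transformation modulo $\Gamma_A$ and an element of the slice $T_{A,\epsilon_0}$, and then invoke the inverse function theorem. Let $V \subset \Omega^0_{s+1}(\frg_E)$ denote the $L^2$-orthogonal complement of $\mathfrak{t}_A = \ker d_A$. By lemma \ref{lemma:dA}, $\mathfrak{t}_A$ is finite-dimensional and $d_A\colon \Omega^0_{s+1}(\frg_E) \to \Omega^1_s(\frg_E)$ has closed range; Hodge theory on the closed manifold $M$ then yields the orthogonal decomposition $\Omega^1_s(\frg_E) = \img d_A \oplus \ker d_A^*$, and the restriction $d_A|_V \colon V \to \img d_A$ is a Banach-space isomorphism.

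On a sufficiently small neighborhood $U$ of $0 \in V$, consider the smooth map
$$
\Psi \colon U \times T_{A,\epsilon_0} \lto \cA_s, \qquad \Psi(\xi, B) = \exp(\xi) \cdot B,
$$
where the exponential is taken in the Hilbert-Lie group $\cG_{s+1}$. Using that the derivative of $u \mapsto u \cdot A$ at $\Id$ is $\xi \mapsto -d_A \xi$, the differential $D\Psi|_{(0,A)}$ sends $(\xi,\tau) \in V \times \ker d_A^*$ to $-d_A \xi + \tau \in \Omega^1_s(\frg_E)$, which by the decomposition above is a bounded linear isomorphism of Hilbert spaces. The inverse function theorem therefore produces $\epsilon_0 > 0$ and an open neighborhood $U_0 \subset U$ of $0$ such that $\Psi$ restricts to a diffeomorphism from $U_0 \times T_{A,\epsilon_0}$ onto an open neighborhood $W$ of $A$ in $\cA_s$.

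Since $\Gamma_A$ is a finite-dimensional compact Lie subgroup of $\cG_{s+1}$ by lemma \ref{lemma:GammaH}, with Lie algebra $\mathfrak{t}_A$ complementary to $V$ inside $\Omega^0_{s+1}(\frg_E)$, the map $(\xi, \gamma) \mapsto \exp(\xi)\gamma$ is a local diffeomorphism from a neighborhood of $(0, \Id) \in V \times \Gamma_A$ onto a neighborhood of $\Id \in \cG_{s+1}$. I can thus choose $\epsilon \leq \epsilon_0$ small enough that $T_{A,\epsilon} \subset W$ and that every $u \in \cG_{s+1}$ with $\norm{u - \Id}_{s+1} < \epsilon$ admits a unique factorization $u = \exp(\xi)\gamma$ with $\xi \in U_0$ and $\gamma \in \Gamma_A$. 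For such a $u$, if $A_1, A_2 \in T_{A,\epsilon}$ satisfy $A_1 = u \cdot A_2$, then lemma \ref{lemma:preservestab} guarantees $\gamma \cdot A_2 \in T_{A,\epsilon}$, and
$$
\Psi(\xi, \gamma \cdot A_2) = \exp(\xi) \cdot (\gamma \cdot A_2) = u \cdot A_2 = A_1 = \Psi(0, A_1).
$$
Injectivity of $\Psi$ on $U_0 \times T_{A,\epsilon_0}$ forces $\xi = 0$ and $\gamma \cdot A_2 = A_1$, giving the statement.

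The main technical point is the bijectivity of $D\Psi|_{(0,A)}$ between Sobolev spaces: this rests crucially on the closed-range and finite-dimensional-kernel properties of $d_A$ provided by lemma \ref{lemma:dA}, which ultimately stem from the ellipticity of the Laplacian $d_A^* d_A$ on the closed manifold $M$. A minor but delicate point is the transversal factorization $u = \exp(\xi)\gamma$ near $\Id$ in $\cG_{s+1}$; this is handled by a second application of the inverse function theorem, using that $\Gamma_A$ is a smooth closed Lie subgroup of $\cG_{s+1}$.
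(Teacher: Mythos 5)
Your proof is correct and follows essentially the same strategy as the paper's: a slice argument built from the orthogonal decomposition $\Omega^1_s(\frg_E) = \img d_A \oplus \ker d_A^*$ from lemma \ref{lemma:dA}, followed by the inverse function theorem applied to the action map. The only cosmetic difference is that where the paper passes to the quotient $(\cG_{s+1} \times T_{A,\epsilon})/\Gamma_A$ to kill the degeneracy along $\mathfrak{t}_A$, you instead restrict the gauge factor to the orthogonal complement $V$ of $\mathfrak{t}_A$ and recover the full neighborhood of $\Id$ via the explicit local factorization $u = \exp(\xi)\gamma$; these are interchangeable packagings of the same idea.
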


\begin{proof}
By lemma \ref{lemma:dA}, the bounded linear map $d_{A}\colon \Omega^{0}_{s+1}(\frg_{E})\to \Omega^{1}_{s}(\frg_{E})$ has closed range. Therefore, there exists the following orthogonal decomposition:
\begin{equation}
\label{eq:TAsplit}
T_{A}\cA_s \cong \Omega^{1}_{s}(\frg_{E}) \cong \im (d_{A}) \oplus \ker (d^{\ast}_{A})\, ,
\end{equation}
where $d^{\ast}_{A}$ is the adjoint of $d_{A}$ with respect to $L^2_{s}$. 
Notice that $\im (d_{A})$ is the tangent space to the orbit $\cO_{A}$ at $A$. We define now the following differentiable map 
of smooth Hilbert manifolds	
\begin{eqnarray*}
\Psi\colon \cG_{s+1}\times T_{A,\epsilon} &\to& \cA_{s}\, , \\
(u, \tau) &\mapsto& u\cdot \tau\, .
\end{eqnarray*}

\noindent	
The differential of $\Psi$ at $(\Id,0)\in \cG_{s+1}\times T_{A,\epsilon}$ is given by:	
\begin{eqnarray*}
d\Psi_{(\Id,0)}\colon \Omega^{0}_{s+1}(\frg_{E})\times \ker\, d^{\ast}_A &\to& \Omega^{1}_{s}(\frg_{E})\, , \\
(l,\tau) &\mapsto& -d_{A}l +\tau\, ,
\end{eqnarray*}
where we have identified $T_{\Id}(\cG_{s+1})\cong \Omega^{0}_{s+1}(\frg_{E})$ and $T_{A}(T_{A,\epsilon})\cong  \ker (d^{\ast}_A)$. 
With respect to the splitting given in equation \eqref{eq:TAsplit} we can write equation $d\Psi_{(\Id,0)}$ as follows	
 $$
 d\Psi_{(\Id,0)} = (-d_{A},\Id)\, ,
 $$	
and hence we conclude that $d\Psi_{(\Id,0)}$ is an isomorphism of Banach spaces if and only if $d_{A}$ is injective. 

For clarity we distinguish now three cases, although strictly speaking each case is a particular case of the next one.	

\begin{itemize}
\item {\bf $A\in\cA^{\ast}_{s}$ irreducible and $Z(\G)=\{\Id\}$.} 
In this case $d_{A}\colon \Omega^{0}_{s+1}(\frg_{E})\to \Omega^{1}_{s}(\frg_{E})$ is injective (it has trivial kernel), 
and hence $d\Psi_{(\Id,0)}$ is an isomorphism. It follows then by the inverse function theorem that there exists a neighborhood		
$$
\cN_{\Id,\epsilon} := \left\{ u\in \cG_{s+1} \,\, |\,\, \norm{u-\Id}_{s+1} <\epsilon \right\}\, ,
$$ 
of $\Id\in \cG_{s+1}$ and a neighborhood $U(A)$ of $A\in \cA^{\ast}_{s}$ such that $\Psi\colon \cN_{\Id,\epsilon}\times T_{A,\epsilon} \to U(A)\subset \cA^{\ast}_{s}$ is a diffeomorphism. Therefore, any connection in $A^{\prime}\in U(A)$ can be written as $A^{\prime} = u\cdot A'_0$ for an appropriate $u\in \cN_{\Id,\epsilon}$ in a neighborhood of the identity
and $A'_0\in T_{A,\epsilon}$. In particular, for the $\epsilon$ appearing in $\cN_{\Id,\epsilon}$, if $A_{1}, A_{2}\in T_{A,\epsilon}$ such that $A_{1} = u\cdot A_{2}$ for a gauge transformation $u\in \cG_{s+1}$ satisfying $\norm{u-\Id}_{s+1} < \epsilon$ then $u=\Id$ and $A_{1} = A_{2}$.
		
\item {\bf $A\in\cA^{\ast}_{s}$ irreducible and any $Z(\G)$.} 
In this case $d_{A}\colon \Omega^{0}_{s+1}(\frg_{E})\to \Omega^{1}_{s}(\frg_{E})$ may have kernel, isomorphic to $\zeta(\frg)$,
the Lie algebra of $Z(G)$. Therefore, we will slightly modify the domain of the map $\Psi$ in order to obtain a diffeomorphism. 
Instead of $\cG_{s+1}\times T_{A,\epsilon}$ we consider
  \begin{equation*}
 (\cG_{s+1}\times T_{A,\epsilon})/Z(\G) \cong \bar{\cG}_{s+1}\times T_{A,\epsilon}\, . 
 \end{equation*}
Here $Z(\G)$ acts trivially on $T_{A,\epsilon}$.
Since $Z(\G)\subset \cG_{s+1}$ is a normal Hilbert subgroup of $\cG_{s+1}$, we have that $\cG_{s+1}/C(\G)$ is again a infinite-dimensional 
Hilbert Lie group with Lie algebra isomorphic to $\Omega^{0}_{s+1}(\frg_{E})/\zeta(\frg)$ (cf.\cite{GNeeb}). 
This isomorphism follows simply from the fact that $Z(\G)$ is the subgroup of $\cG_{s+1}$ consisting on 
parallel sections of the endomorphism bundle with respect to $d_{A}$. We define then the following differentiable map		
\begin{eqnarray*}
\bar{\Psi}\colon \bar{\cG}_{s+1}\times T_{A,\epsilon} &\to& \cA_{s}\, , \\
(u, \tau) &\mapsto& u\cdot \tau\, .
\end{eqnarray*}
Notice that $\bar{\Psi}$ is well-defined since different representatives of an element in $\bar{\cG}_{s+1}$ differ by an element in 
$Z(\G)$ which does not affect $\tau \in \ker\, d^{\ast}_{A}$. The differential $d\bar{\Psi}_{(\Id,0)}$ of $\bar{\Psi}$ at $(\Id,0)$ is given by
\begin{eqnarray*}
d\Psi_{(\Id,0)}\colon \frac{\Omega^{0}_{s+1}(\frg_{E})}{\zeta(\mathfrak{g})}\times \ker\, d^{\ast}_A &\to& \Omega^{1}_{s}(\frg_{E})\, , \\
(l,\tau) &\mapsto& -d_{A}l +\tau\, ,
\end{eqnarray*}
Clearly now $d_{A}$ is inyective and therefore $d\Psi_{(\Id,0)}$ is an isomorphism of infinite-dimensional Banach spaces. 
We conclude that there exists a neighborhood		
 $$
 \bar{\cN}_{\Id,\epsilon} := \left\{ u\in \bar{\cG}_{s+1}\,\, |\,\, \norm{u-\Id}_{s+1} <\epsilon \right\}\, ,
 $$		
of $\Id\in \bar{\cG}_{s+1}$ and a neighborhood $U(A)$ of $A\in \cA^{\ast}_{s}$ such that $\Psi\colon \bar{\cN}_{\Id,\epsilon}\times T_{A,\epsilon} \to U(A)\subset \cA^{\ast}_{s}$ is a diffeomorphism. 

\item {\bf $A\in\cA_{s}$ not necessarily irreducible.} In this case the stabilizer $\Gamma_A$ of $A$ may be non-trivial, and 
$d_{A}$ can have non-trivial kernel, which is isomorphic to the Lie algebra $\mathfrak{t}_A$ of $\Gamma_A$. 
We again slightly modify the definition of $\Psi$ and instead define the following differentiable map
\begin{eqnarray*}
\bar\Psi\colon (\cG_{s+1}\times T_{A,\epsilon}) /\Gamma_{A} &\to& \cA_{s}\, , \\
\left[ u, \tau \right] &\mapsto& u\cdot \tau\, .
\end{eqnarray*}
Here $[u,\tau ]$ denotes the equivalence class of $(u,\tau)$ respect to the action of $\Gamma_{A}$. We first check that $\Psi$ is well-defined, 
namely that its value does not depend on the representative. If $(u,\tau)\in  (\cG_{s+1} \times T_{A,\epsilon})/\Gamma_{A}$ 
then any other representative is of the form 
$(u \gamma^{-1}, \gamma \tau \gamma^{-1})$ for a unique $\gamma \in \Gamma_{A}$. It is then a direct computation to check that 
$\Psi(u,\tau) = \Psi(u \gamma^{-1}, \gamma \tau \gamma^{-1})$. 

The action of $\Gamma_A$ on $\cG_s \x T_{A,\epsilon}$ is free, since it is free in the first variable. 
Therefore $(\cG_{s+1}\times T_{A,\epsilon}) /\Gamma_{A}$
is smooth and the tangent space at $(\Id,0)$ is $ \frac{\Omega^{0}_{s+1}(\frg_{E})}{\mathfrak{t}_{A}}\times \ker d^{\ast}_A$
The differential $d\bar\Psi_{(\Id,0)}$ of $\bar\Psi$ at $(\Id,0)$ is
\begin{eqnarray*}
 d\bar\Psi_{(\Id,0)}\colon \frac{\Omega^{0}_{s+1}(\frg_{E})}{\mathfrak{t}_{A}}\times \ker d^{\ast}_A &\to& \Omega^{1}_{s}(\frg_{E})\, , \\
(l,\tau) &\mapsto& -d_{A}l +\tau\, .
\end{eqnarray*}
Clearly now $d_{A}$ is 
injective and therefore $d\Psi_{(\Id,0)}$ is an isomorphism of infinite-dimensional Banach spaces. We conclude that there exists a neighborhood
 $$
 \bar{\cN}_{\Id,\epsilon} := \left\{ u\in \bar{\cG}_{s+1}\,\, |\,\, \norm{u-\Id}_{s+1} <\epsilon \right\}\, ,
 $$		
of $\Id\in \bar{\cG}_{s+1}$ and a neighborhood $U(A)$ of $A\in \cA^{\ast}_{s}$ such that $\bar\Psi\colon (\bar{\cN}_{\Id,\epsilon}\times T_{A,\epsilon})/
\Gamma_A \to U(A)\subset \cA^{\ast}_{s}$ is a diffeomorphism. 

Therefore, any connection in $A^{\prime}\in U(A)$ can be written as $A^{\prime} = u\cdot A$ for the appropriate $u\in \bar{\cN}_{\Id,\epsilon}$ in a neighborhood of the identity. In particular, if $A_{1}, A_{2} \in T_{A,\epsilon}$ satisfy 
$A_{1} = u\cdot A_{2}$ for a $u\in \bar{\cN}_{\Id,\epsilon}$ then $A_{1} = \gamma\cdot A_{2}$ for a $\gamma\in \Gamma_{A}$.
\end{itemize}
\end{proof}

The previous lemma shows that $T_{A,\epsilon}$ is a local slice for the action of a local neighborhood of the identity in $\cG_{s+1}$. We want to show now that, perhaps after taking a smaller $\epsilon >0$, $T_{A,\epsilon}/\Gamma_{A}$ is a slice for the complete gauge group $\cG_{s+1}$.

\begin{lemma}
\label{lemma:A1A2refined}
Let $A\in \cA_{s}$. There exists a $\epsilon>0$ such that if $A_{1}, A_{2}\in T_{A,\epsilon}$ are equivalent by a gauge transformation $u\in \cG_{s+1}$, then $u = \gamma \in \Gamma_{A}$ and hence $A_{1} = \gamma\cdot A_{2}$.
\end{lemma}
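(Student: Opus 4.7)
The plan is to argue by contradiction, using Lemma \ref{lemma:convergence} as a compactness device to reduce the general case to the local slice result already proved in Lemma \ref{lemma:A1A2}. Let $\epsilon_0>0$ be the radius for which the conclusion of Lemma \ref{lemma:A1A2} holds. Suppose no $\epsilon$ satisfies the conclusion of Lemma \ref{lemma:A1A2refined}. Then for every $n\in\mathbb{N}$ we can find connections $A_1^n,A_2^n\in T_{A,1/n}$ and a gauge transformation $u_n\in\cG_{s+1}$ such that $A_1^n=u_n\cdot A_2^n$ but $u_n\notin \Gamma_A$.

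Since $A_1^n,A_2^n\to A$ in the $L^2_s$-norm, Lemma \ref{lemma:convergence} applies and, after passing to a subsequence, yields a limit $u\in\cG_{s+1}$ with $u_n\to u$ in $L^2_{s+1}$ and $u\cdot A=A$; in particular $u\in\Gamma_A$. Set $v_n:=u^{-1}u_n\in\cG_{s+1}$, so that $\|v_n-\mathrm{Id}\|_{s+1}\to 0$. By Lemma \ref{lemma:preservestab} the element $u^{-1}\in\Gamma_A$ preserves the slice $T_{A,1/n}$, hence $u^{-1}\cdot A_1^n\in T_{A,1/n}\subset T_{A,\epsilon_0}$ for $n$ large. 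Moreover $v_n\cdot A_2^n=u^{-1}u_n\cdot A_2^n=u^{-1}\cdot A_1^n$, so the pair $(A_2^n,\,u^{-1}\cdot A_1^n)$ consists of two elements of the slice $T_{A,\epsilon_0}$ related by the gauge transformation $v_n$, with $\|v_n-\mathrm{Id}\|_{s+1}<\epsilon_0$ for $n$ large enough.

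Now we invoke Lemma \ref{lemma:A1A2}. Its proof produces, via the inverse function theorem, a local diffeomorphism $\bar\Psi\colon (\bar{\cN}_{\mathrm{Id},\epsilon_0}\times T_{A,\epsilon_0})/\Gamma_A\to U(A)\subset\cA_s$. Applied to our situation, the identity $\bar\Psi([v_n,A_2^n])=u^{-1}\cdot A_1^n=\bar\Psi([\mathrm{Id},u^{-1}\cdot A_1^n])$ in the quotient forces $(v_n,A_2^n)$ and $(\mathrm{Id},u^{-1}\cdot A_1^n)$ to lie in the same $\Gamma_A$-orbit. Reading off the first factor of this orbit relation gives $v_n\in\Gamma_A$, and therefore $u_n=u\,v_n\in\Gamma_A$, contradicting the choice of $u_n$.

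The main obstacle is the step where we pass from the qualitative convergence $v_n\to\mathrm{Id}$ to the membership $v_n\in\Gamma_A$; this is precisely where the conjugation-invariance of the slice given by Lemma \ref{lemma:preservestab} is essential, since it is what guarantees that $u^{-1}\cdot A_1^n$ remains in the slice so that Lemma \ref{lemma:A1A2} can be applied. The remainder of the argument is a routine compactness/contradiction bootstrap and does not require any new analytic input beyond Lemmas \ref{lemma:convergence}, \ref{lemma:preservestab} and \ref{lemma:A1A2}.
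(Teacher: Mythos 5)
Your proof is correct and follows the same strategy as the paper: argue by contradiction, use Lemma \ref{lemma:convergence} to extract a convergent subsequence of gauge transformations with limit in $\Gamma_A$, and then invoke the local diffeomorphism $\bar\Psi$ from Lemma \ref{lemma:A1A2} to force the contradiction. The only cosmetic difference is that you explicitly introduce $v_n = u^{-1}u_n$ and apply Lemma \ref{lemma:preservestab} to keep $u^{-1}\cdot A_1^n$ inside the slice, whereas the paper instead notes that both $[A_1^n,\Id]$ and $[A_2^n,u_n]$ converge to $[A,\Id]$ in the quotient and then reads off the orbit equality directly; this is the same argument with the conjugation by $u$ made explicit rather than absorbed into the quotient.
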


\begin{proof}
Lemma \ref{lemma:A1A2} shows that
\begin{eqnarray*}
\Psi\colon \left(\cG_{s+1}\times T_{A,\epsilon}\right)/\Gamma_{A} &\to& \cA_{s}\, , \\
\left[ u, \tau \right] &\mapsto& u\cdot \tau\, ,
\end{eqnarray*}
is a local diffeomorphism. We want to show that there exists an $\epsilon >0$ such that $\Psi$ is a global diffeomorphism from $\left(\cG_{s+1}\times T_{A,\epsilon}\right)/\Gamma_{A}$ onto its image. If there was not such $\epsilon >0$, there would exist sequences $\left\{A^{n}_{1}\right\}, \left\{A^{n}_{2}\right\} \in T_{A,\epsilon}$ and $\left\{u_{n}\right\}\in \cG_{s+1}$ such that
\begin{equation}
\label{eq:equations}
A^{n}_{1} = u_{n}\cdot A^{n}_{2}\, , \qquad \lim_{n\to\infty} A_{1}^{n} = \lim_{n\to\infty} A_{2}^{n} = A\, , \qquad [A_{1}^{n}] \neq [A_{2}^{n}]\, .
\end{equation}

By lemma \ref{lemma:convergence} we can extract a subsequence of $\left\{u_{n}\right\}$ such that it converges to $u\in \cG_{s+1}$ and satisfies $A = u\cdot A$. Therefore $u\in \Gamma_{A}$. Hence, the sequences $[A^{n}_{1},\Id]$ and $[A^{n}_{2}, u_{n}]$ in $\left(T_{A,\epsilon}\times \cG_{s+1}\right)/\Gamma_{A}$ both converge to $[A,\Id]$. By the local diffeomorphism property of $\Psi$ this implies that for $n> n_{0}$ for some fixed $n_{0}$ we have $[A_{1}^{n},\Id] = [A_{2}^{n}, u_{n}]$, contradicting the third equation in \eqref{eq:equations}.
\end{proof}

From the previous lemma we obtain the following corollary.

\begin{cor}
\label{cor:polyntau}
For every fixed $A\in \cA_{s}$ there exists a polynomial $p(x,y)$ such that if $A_{1} = A + \tau_{1}$ and $A_{2} = A + \tau_{2}$ satisfy $u\cdot A_{2} =A_{1}$ for some $u\in \cG_{s+1}$ then
 $$
 \norm{u}_{s+1} \leq p(\norm{\tau_{1}}_{s},\norm{\tau_{2}}_{s})\, .
 $$
\end{cor}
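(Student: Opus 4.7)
The plan is to retrace the bootstrap argument from Lemma \ref{lemma:convergence}, this time tracking the constants explicitly to turn the qualitative boundedness into a polynomial bound. From $A_1 = u \cdot A_2$ one has the fundamental identity
$$
d_A u = \tau_2 u - u\tau_1,
$$
with derivatives interpreted weakly. The strategy is to start with a universal $L^\infty$ bound on $u$ (coming from the compactness of $\G$), and then apply Sobolev multiplication together with an elliptic estimate for $d_A$ repeatedly, gaining one derivative at a time and at each stage bounding $\|u\|_{i+1}$ by an affine function of $\|u\|_i$ whose coefficients are polynomials in $\|\tau_1\|_s$ and $\|\tau_2\|_s$.

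First I would exploit that, viewed as a $\G$-equivariant map $P\to \G\subset\mathrm{Mat}(r,\CC)$, $u$ takes values in a compact set, so $\|u\|_{L^\infty}\le K$ for a constant $K$ depending only on $\G$; by compactness of $M$ this gives $\|u\|_0\le K_0$ with $K_0$ independent of $u$, $\tau_1$, $\tau_2$. This furnishes the base case of the induction.

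Next, for each $0\le i\le s$, the Sobolev multiplication $L^2_s\otimes L^2_i\to L^2_i$ yields a constant $c_i$ such that
$$
\|d_A u\|_i \;\le\; c_i\bigl(\|\tau_1\|_s+\|\tau_2\|_s\bigr)\,\|u\|_i.
$$
Combining this with the standard elliptic estimate $\|u\|_{i+1}\le C_i\bigl(\|u\|_0+\|d_A u\|_i\bigr)$ for the first-order elliptic operator $d_A$ (which is exactly the identity already used in Lemma \ref{lemma:dA} when $i=s$, and follows from elliptic regularity of $\Delta_A=d_A^*d_A$ in general), one obtains the recursion
$$
\|u\|_{i+1} \;\le\; \alpha_i + \beta_i\bigl(\|\tau_1\|_s+\|\tau_2\|_s\bigr)\,\|u\|_i,
$$
with $\alpha_i,\beta_i$ depending only on $A$ and $i$.

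Finally, iterating this inequality from $i=0$ up to $i=s$, starting with $\|u\|_0\le K_0$, produces a bound $\|u\|_{s+1}\le p\bigl(\|\tau_1\|_s,\|\tau_2\|_s\bigr)$, where $p$ is a polynomial of degree at most $s+1$ whose coefficients depend only on $A$. The main technical point to be careful about is ensuring that the elliptic estimate $\|u\|_{i+1}\le C_i(\|u\|_0+\|d_A u\|_i)$ holds in every intermediate regularity $i\le s$, and that the Sobolev multiplication $L^2_s\otimes L^2_i\to L^2_i$ is indeed continuous in this range; both are standard once $s>\tfrac12\dim M$, as has been assumed throughout, so no essential new analytic input is required beyond what has already appeared in Lemmas \ref{lemma:dA} and \ref{lemma:convergence}.
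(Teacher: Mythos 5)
Your proof is correct and matches the argument the paper intends: the corollary is a quantitative version of the bootstrap carried out in Lemmas \ref{lemma:s1} and \ref{lemma:convergence}, and the paper simply states the corollary as following from that machinery without spelling out the constants. Starting from the uniform $L^\infty$ (hence $L^2$) bound on $u$ coming from compactness of $\G$, and iterating the Sobolev-multiplication estimate $\norm{d_A u}_i \leq c_i(\norm{\tau_1}_s + \norm{\tau_2}_s)\norm{u}_i$ through the elliptic estimate for $d_A$ exactly $s+1$ times, gives the claimed polynomial bound of degree $s+1$ with coefficients depending only on $A$. The only small point worth flagging is that the elliptic estimate $\norm{u}_{i+1} \leq C_i(\norm{u}_0 + \norm{d_A u}_i)$ needs a word about passing from the standard $\norm{u}_{i+1} \leq C_i(\norm{u}_i + \norm{d_A u}_i)$ to the $\norm{u}_0$ version (e.g.\ via Ehrling's lemma or by handling the finite-dimensional $\ker d_A$ separately); alternatively one can simply iterate the $\norm{u}_i$-version of the estimate, which yields the same polynomial conclusion with less fuss.
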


Lemmas \ref{lemma:A1A2} and \ref{lemma:A1A2refined} finally prove that $T_{A,\epsilon}$ is a slice for the gauge group $\cG_{s+1}$ acting on the space of connections $\cA_{s}$.

\begin{cor}
\label{cor:slice}
For small enough $\epsilon > 0$ and $A\in \cA_{s}$, $T_{A,\epsilon}/\Gamma_{A}$ is a local slice for the action of $\cG_{s+1}$ on $\cA_s$.
\end{cor}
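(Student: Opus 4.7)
The plan is to collect the three defining properties of a local slice and verify each one by quoting the preceding lemmas, choosing $\epsilon$ small enough so that all of them apply simultaneously. Recall that to say $T_{A,\epsilon}/\Gamma_A$ is a local slice for the $\cG_{s+1}$-action at $A$ means: (i) $T_{A,\epsilon}$ is $\Gamma_A$-invariant; (ii) the natural map $\Psi\colon \cG_{s+1}\times_{\Gamma_A} T_{A,\epsilon}\to \cA_s$, $[u,\tau]\mapsto u\cdot(A+\tau)$, is a homeomorphism (in fact a diffeomorphism of Hilbert manifolds) onto an open neighborhood of $A$; and (iii) each $\cG_{s+1}$-orbit meeting $T_{A,\epsilon}$ does so in exactly one $\Gamma_A$-orbit.

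First I would take the invariance statement (i) directly from Lemma \ref{lemma:preservestab}, which already asserts that $\Gamma_A\cdot T_{A,\epsilon} = T_{A,\epsilon}$ for every $\epsilon$. Next, for (ii), I would appeal to the construction carried out in the proof of Lemma \ref{lemma:A1A2}: the orthogonal splitting $\Omega^{1}_s(\frg_E) = \img(d_A)\oplus \ker(d^{\ast}_A)$ from Lemma \ref{lemma:dA} identifies $T_A T_{A,\epsilon}$ with $\ker(d^{\ast}_A)$ and makes the differential $d\bar{\Psi}_{(\Id,0)}\colon \Omega^{0}_{s+1}(\frg_E)/\mathfrak{t}_A\times \ker(d^{\ast}_A)\to \Omega^{1}_s(\frg_E)$ an isomorphism of Banach spaces; the inverse function theorem then yields, after shrinking $\epsilon$, a neighborhood $U(A)\subset \cA_s$ on which $\bar{\Psi}$ is a diffeomorphism. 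This simultaneously shows that every orbit near $A$ meets $T_{A,\epsilon}$ and that the slice map is a local diffeomorphism.

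For (iii), I would invoke Lemma \ref{lemma:A1A2refined}, which asserts precisely that, for $\epsilon$ sufficiently small, whenever $A_1,A_2\in T_{A,\epsilon}$ satisfy $A_1 = u\cdot A_2$ for some $u\in \cG_{s+1}$ (with no size restriction on $u$), then in fact $u\in \Gamma_A$. Consequently any $\cG_{s+1}$-orbit intersects $T_{A,\epsilon}$ in at most one $\Gamma_A$-orbit, and together with (ii) this promotes $\bar{\Psi}$ from a local to a genuine diffeomorphism onto $U(A)$.

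The main conceptual work is already discharged in Lemmas \ref{lemma:A1A2} and \ref{lemma:A1A2refined}; the corollary is essentially a repackaging in slice-theoretic language. The only potential subtlety is to ensure a \emph{single} $\epsilon$ serves all three properties, which is handled by taking the minimum of the three $\epsilon$'s produced by Lemmas \ref{lemma:preservestab}, \ref{lemma:A1A2}, and \ref{lemma:A1A2refined}. No further estimates are required.
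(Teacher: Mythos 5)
Your proposal is correct and follows the same route as the paper: the paper simply states that Lemmas \ref{lemma:A1A2} and \ref{lemma:A1A2refined} (together with Lemma \ref{lemma:preservestab}) establish the slice property, without unpacking the three defining conditions. Your version merely makes explicit which lemma delivers which condition and notes that one takes the minimum of the various $\epsilon$'s, which is a faithful elaboration rather than a different argument.
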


\begin{thm} \label{thm:localorbispace}
The following statements hold:
\begin{itemize}
\item The space $\cB_s$ is a Hausdorff topological space.

\item The subspace $\cB^{\ast}_{s}\subset \cB_{s}$ is an open in $\cB_{s}$.
		
\item The space $\cB^{\ast}_{s}$ is a smooth Hilbert manifold with local charts given by $\pi\colon T_{A,\epsilon}\to\cB^{\ast}_{s}$ for $\epsilon>0$ small enough.	

\item The map $\pi\colon \cA^{\ast}_{s}\to {\cB}^{\ast}_{s}$ is a smooth principal bundle. 

\item For each $A\in \cA^{\ast}_{s}$, the stabilizer $\Gamma_{A}\subset \cG_{s+1}$ preserves $T_{A,\epsilon}$ and the map
\begin{equation*}
h\colon T_{A,\epsilon}/\Gamma_{A}\to \cB_{s}\, ,
\end{equation*}
is a homeomorphism onto a neighborhood of $h([A])$, which in addition is a diffeomorphism outside the fixed point set of $\Gamma_{A}$.
\end{itemize}
\end{thm}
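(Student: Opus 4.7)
The plan is to deduce all five statements from the slice theorem already established in Corollary \ref{cor:slice}, together with the sequential compactness result of Lemma \ref{lemma:convergence} and the structure of the stabilizer described in Proposition \ref{prop:reducible}.

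For the Hausdorff property, I would argue by contradiction. If classes $[A_1]\neq [A_2]$ in $\cB_s$ failed to have disjoint open neighborhoods, one could produce sequences $A_1^n \to A_1$, $A_2^n \to A_2$ in $\cA_s$ and $u_n\in\cG_{s+1}$ with $u_n\cdot A_2^n = A_1^n$. Lemma \ref{lemma:convergence} then extracts, after a subsequence, a limit $u\in\cG_{s+1}$ with $u\cdot A_2 = A_1$, contradicting $[A_1]\neq [A_2]$. For openness of $\cB^{\ast}_s$, since $\pi$ is a quotient map and $\pi^{-1}(\cB^{\ast}_s)=\cA^{\ast}_s$, it suffices to prove $\cA^{\ast}_s$ is open; this follows because irreducibility is equivalent to $\dim\mathfrak{t}_A=\dim\zeta(\frg)$, and upper semicontinuity of $\dim\ker d_A$ (a consequence of the orthogonal decomposition in Lemma \ref{lemma:dA}, applied on small $L^2_s$-perturbations of $A$) means this minimal value persists on a neighborhood.

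For the smooth Hilbert manifold structure, I would use $\pi\colon T_{A,\epsilon}\to \cB^{\ast}_s$ as a local chart at each $A\in\cA^{\ast}_s$. By Proposition \ref{prop:reducible}, $\Gamma_A = Z(\G)$ acts trivially on $T_{A,\epsilon}$, so Corollary \ref{cor:slice} shows this map is a homeomorphism onto an open neighborhood of $[A]$. Compatibility of charts follows from the diffeomorphism property of the map $\bar\Psi$ established in the second case of Lemma \ref{lemma:A1A2}: given two nearby slices $T_{A,\epsilon}$ and $T_{A',\epsilon'}$, writing the transition map using $\bar\Psi^{-1}$ exhibits it as the composition of smooth maps between Hilbert manifolds. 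For the principal bundle statement, the same diffeomorphism $\bar\Psi\colon\bar\cN_{\Id,\epsilon}\times T_{A,\epsilon}\to U(A)$ gives a local section $[B]\mapsto B$ of $\pi$ over $\pi(T_{A,\epsilon})$ and a corresponding local trivialization $\pi^{-1}(\pi(T_{A,\epsilon}))\cong\bar\cG_{s+1}\times T_{A,\epsilon}$, which is $\bar\cG_{s+1}$-equivariant; translating by arbitrary gauge transformations produces a compatible atlas.

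The final bullet is essentially a restatement of Corollary \ref{cor:slice} combined with Lemma \ref{lemma:preservestab}: the slice $T_{A,\epsilon}$ is $\Gamma_A$-invariant; on the open subset where $\Gamma_A$ acts freely, the quotient is a smooth Hilbert manifold and $h$ is a diffeomorphism, while at fixed points $h$ is only a homeomorphism because the quotient acquires orbifold-type singularities. The main obstacle I anticipate is not any single step but rather the careful bookkeeping needed to verify that the local trivializations constructed via different slices $T_{A,\epsilon}$ and $T_{A',\epsilon'}$ are mutually compatible as smooth principal bundle charts; this requires combining Corollary \ref{cor:polyntau} to control the size of gauge transformations relating nearby slices with the implicit function theorem used in Lemma \ref{lemma:A1A2} to ensure smoothness of the resulting transition cocycle.
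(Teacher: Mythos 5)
Your Hausdorff argument is essentially the paper's, with one small bookkeeping gap: passing from ``no disjoint open neighborhoods'' to the existence of sequences requires first countability of $\cB_s$, which the paper explicitly derives from first countability of $\cA_s$ and openness of $\pi$ before proceeding. Invoking Lemma~\ref{lemma:convergence} to extract a convergent subsequence of the $u_n$ is a clean variant of what the paper does with Corollary~\ref{cor:polyntau} and the compact Sobolev embedding; both arrive at the same contradiction.

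The argument for openness of $\cB^\ast_s$, however, has a genuine gap. You assert that irreducibility is equivalent to $\dim\mathfrak{t}_A = \dim\zeta(\frg)$. This is false in general: $\mathfrak{t}_A$ is the Lie algebra of $\Gamma_A$, the centralizer of the holonomy $H_A$ in $\G$, and there are proper subgroups $H\subsetneq\G$ whose centralizer is exactly $Z(\G)$. For example, with $\G=\U(2)$, a connection whose holonomy is the finite subgroup generated by $\diag(1,-1)$ and $\bigl(\begin{smallmatrix}0&1\\1&0\end{smallmatrix}\bigr)$ satisfies $\Gamma_A = Z(\U(2))$, hence $\mathfrak{t}_A = \zeta(\fru(2))$, yet $A$ is reducible in the sense of Definition~\ref{def:reducible} since $H_A\neq\G$. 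So the set cut out by $\dim\mathfrak{t}_A=\dim\zeta(\frg)$, which upper semicontinuity of $\dim\ker d_A$ shows to be open, is in general a strict superset of $\cA^\ast_s$, and your argument does not show that $\cA^\ast_s$ itself is open. The paper instead argues that the reducible locus is closed: given a sequence of reducibles $[A^R_n]\to[A]$, after passing to a subsequence the holonomy Lie algebras $\frh_{A^R_n}\subsetneq\frg$ converge to a subspace $\frh$, so $F_A\in\Omega^2(\frh)$ (taking $s$ large enough for $C^1$-regularity), forcing $A$ to be reducible. To repair your argument you would need to phrase openness in terms of the holonomy being all of $\G$ rather than in terms of the dimension of the infinitesimal stabilizer, since the two notions do not coincide for connections with finite non-central holonomy.

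For the smooth Hilbert manifold structure and the principal bundle statement, you attempt to build the atlas and the local trivializations directly from the diffeomorphism $\bar\Psi$ of Lemma~\ref{lemma:A1A2}, using Corollary~\ref{cor:polyntau} to control the gauge transformations between nearby slices. This is a reasonable and more self-contained route, but note that the paper at this point simply cites Mitter--Viallet \cite{MitterViallet} for both facts rather than verifying chart compatibility and the bundle structure from scratch. Your sketch captures the correct strategy but would need the bookkeeping you yourself flag to be completed. The treatment of the final bullet (slices preserved by $\Gamma_A$, $h$ a homeomorphism, diffeomorphism away from fixed points) agrees with the paper's, which attributes it to Lemma~\ref{lemma:preservestab} and Corollary~\ref{cor:slice}.
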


\begin{proof}
The fact that the projection $\pi\colon \cA_{s}\to \cB_{s}$ is open when $\cB_{s}$ is equipped with the quotient topology together with the fact that $\cA_{s}$ is first countable imply that $\cB_{s}$ is also first countable. Hence $\cB_{s}$ is Hausdorff if and only if every convergent sequence has a unique limit. Let us assume then that there exists a convergent sequence with two different limits. In other words, we assume that there exist sequences $\left\{ A^{n}_{1}\right\}$ and $\left\{ A^{n}_{2}\right\}$ of connections such that for all $n\in \mathbb{N}$ we have
\begin{equation*}
A^{n}_{1} = u_{n}\cdot A^{n}_{2}\, , \qquad u_{n}\in \cG_{s+1}\, ,
\end{equation*}
and such that $A_{1} = \lim_{n\to\infty} A^{n}_{1}$ and $A_{2} = \lim_{n\to\infty} A^{n}_{2}$ are gauge-inequivalent connections. We set $A_{2} = A_{1} + \tau_{2}$, $A^{n}_{2} = A_{1} + \tau^{n}_{2}$ and $A^{n}_{1} = A_{1} + \tau^{n}_{1}$, where $\tau_{2}, \tau^{n}_{2}, \tau^{n}_{1} \in \Omega^{1}_{s}(\frg_{E})$ for all $n\in \mathbb{N}$. By hypotesis we have $A_{1} = \lim_{n\to\infty} A^{n}_{1}$ and hence $\lim_{n\to \infty} \tau^{n}_{1} = 0$
and $\lim_{n\to \infty} \tau^{n}_{2} = \tau_2$, in the Sobolev $L^2_{s}$-norm. In particular $\norm{\tau^{n}_{1}}_{s}$ and $\norm{\tau^{n}_{2}}_{s}$ are uniformly bounded, which implies, using corollary \ref{cor:polyntau}, that there exists a constant $c>0$ such that $\norm{u_{n}}_{s+1} < c$ for all $n\in\mathbb{N}$. Applying now the compact Sobolev embedding theorem we conclude that there is a subsequence of $\left\{u_{n}\right\}$ which converges strongly to an element $u\in \cG_{s+1}$ in the Sobolev $L^2_{s}$-norm. Using now that
\begin{equation*}
d_{A} u_{n} = u_{n} \tau^{n}_{1} - \tau^{n}_{2} u_{n}\, , \qquad \forall n\in \mathbb{N}\, ,
\end{equation*}
we obtain
\begin{equation*}
\norm{d_{A_{1}} u_{n}}_{s} \leq c \left( \norm{u_{n}}_{s} \norm{\tau^{n}_{1}}_{s} + \norm{\tau^{n}_{2}}_{s} \norm{u_{n}}_{s}\right)\, ,
\end{equation*}
and hence we conclude that $\left\{ u_{n}\right\}$ converges to $u\in \cG_{s+1}$ in the $L^2_{s+1}$-norm. Therefore, $A_{1} = u\cdot A_{2}$ and every convergent sequence in $\cB_{s}$ has a unique limit, which in turn proves that $\cB_{s}$ is Hausdorff.

In order to prove that $\cB^{\ast}_{s}\subset \cB_{s}$ is open we prove that for every $[A]\in \cB^{\ast}_{s}$ there exists a neighborhood 
$U([A])\subset \cB^{\ast}_{s}$ of $[A]$ contained in $\cB^{\ast}_{s}$. Let us assume otherwise. Then, every neighborhood of 
$[A]$ contains a reducible connection and, since $\cB_{s}$ is first countable, 
this implies the existence of a sequence $\left\{[A^{R}_{n}]\right\}$ of reducible connections in $\cB_{s}$ converging to $[A]\in \cB^{\ast}_{s}$.
The definition of reducibility means that the holonomy Lie algebra $\frh_{A^R_n}$ is not the total space. Passing to a subsequence, we may
assume that $\frh_{A^R_n}$ tends to a subspace $\frh \subset \frg$. So $F_A =\lim_{n\to \infty} F_{A^R_n} \in \Omega^2(\frh)$. We take
$s>0$ large enough so that the connections are $C^1$, and hence $F_A$ is $C^0$. This implies that $A$ has to be reducible.

Corollary \ref{cor:slice} implies that every $A\in {\cB}^{\ast}_{s}$ has a neighborhood homeomorphic to $T_{A,\epsilon}$, which is an infinite-dimensional Hilbert space. The fact that on overlapping open sets in ${\cB}^{\ast}_{s}$ the corresponding changes of coordinates are smooth, as well as the fact that $\pi: \cA^{\ast}\to {\cB}^{\ast}_{s}$ is a smooth principal bundle are both proven in great detail in \cite{MitterViallet}. The rest of the statements follow now from lemma \ref{lemma:preservestab} and corollary \ref{cor:slice}.
\end{proof}

\begin{remark}
The principal fibration
$$
\bar{\cG}_{s+1} \to \cA^{\ast}_{s}\to {\cB}^{\ast}_{s}
$$
induces a long exact sequence in homotopy which implies that
 $$
\pi_{k+1}({\cB}^{\ast}_{s}) = \pi_{k}(\bar{\cG}_{s+1})\, , \qquad k \geq 0\, .
 $$
In particular $\pi_{1}({\mathcal{B}}^{\ast}_{s}) = \pi_{0}(\bar{\cG}_{s+1} )$. 
\end{remark}

\section{Local analysis of the moduli space of $\Spin(7)$-instantons} \label{sec:moduli-local}

As in the previous section, let  $(M,\Omega)$ be an $8$-dimensional manifold with an $\Spin(7)$-structure $\Omega$ and let $P$ be a principal 
$\G$-bundle over $M$, where $\G$ is a compact semi-simple Lie group. Associated to $P$ we consider a complex vector bundle 
$E = P\times_{\rho} \mathbb{C}^{r}$ of rank $r$, where $\rho$ denotes an $r$-dimensional faithful irreducible complex representation of $\G$. 
As explained in Section \ref{sec:Spin(7)rep}, there is a decomposition $\Lambda^2=\Lambda^2_7\oplus \Lambda^2_{21}$ and projections 
$\pi_7:\Lambda^2\to \Lambda^2_7$ and $\pi_{21}:\Lambda^2\to \Lambda^2_{21}$. The following is the central object of this paper.

\begin{definition}\cite{Tian} \label{def:Spinstanton}
A conneciton $A\in \cA_{s}$ is a $\Spin(7)$-instanton if $\pi_{7}(F_A) = 0$. 
\end{definition}

\begin{remark}
The $\Spin(7)$-instanton equation $\pi_7(F_A)=0$ is equivalent to
 \begin{equation}\label{eq:instantoncondition}
\ast F_A = - \Omega\wedge F_A\, ,
 \end{equation}
which is a first-order equation on the connection $A\in\cA$. It implies a second-order equations on $A$.  Acting with $d_{A}$ on \eqref{eq:instantoncondition} we obtain, using the Bianchi identity $d_A F_A=0$, 
\begin{equation} 
\label{eq:instanton2nd}
d^{\ast}_{A} F_A = - \ast (F_A\wedge W)\, ,
\end{equation}
where $W = d\Omega$ is the torsion of the $\Spin(7)$-structure. Interestingly enough, equation \eqref{eq:instanton2nd} follows from the following action, 
which generalizes the standard Yang-Mills action of a connection
 \begin{equation} \label{eq:action}
S(A)= \int_{M} \left( \kappa\left(F_A\wedge\ast F_A\right) + \kappa \left(F_A\wedge F_A\right) \wedge \Omega \right)\, .
\end{equation}
Here $\kappa$ is the bilinear form induced on the adjoint bundle $\mathfrak{g}_{E}$ by the Killing form of $\mathfrak{g}$. For a $\Spin(7)$-holonomy manifold $(M,\Omega)$, the second term in \eqref{eq:action} is topological and we (classically) obtain the standard Yang-Mills action. For a general $\Spin(7)$-manifold $(M,\Omega)$, not necessarily of $\Spin(7)$-holonomy, this term is not topological and does indeed contribute to the equations of motion. To the knowledge of the authors, the physical interpretation of \eqref{eq:action} is still open. For example, it would be interesting to see if it can be supersymmetrized or obtained by dimensional reduction from a supersymmetric Yang-Mills theory on a curved ten-dimensional background.
\end{remark}

\begin{remark}\label{rem:spinorial-Spin7}
 The $\Spin(7)$-instanton equation can be written in terms of the spinor $\eta$ defining the $\Spin(7)$-structure on $M$.
The condition $\pi_7(F_A)=0$ is equivalent to
\begin{equation} \label{eqn:spinorial-Spin7}
	F_{A}\cdot \eta = 0\, .
\end{equation}
\end{remark}

We are interested in studying the \emph{moduli space} of $\Spin(7)$-instantons on $E$, namely the space of connections in $\cA$ satisfying \eqref{eq:instantoncondition} modulo gauge transformations. We define the moduli space of $\Spin(7)$-instantons as follows
\begin{equation*}
\frM = \left\{{{A}}\in{\cA}\,| \, \pi_{7}(F_A) = 0\right\}/\cG\, .
\end{equation*}

As we are working with Sobolev norms to have more control of the topologies involved, we introduce the spaces:
\begin{equation*}
\frM_{s} = \left\{{{A}}\in{\cA}_{s}\,| \, \pi_{7}(F_A) = 0\right\}/{\cG}_{s+1} = \left\{{{A}}\in{\cA}_{s}\,| \, \pi_{7}(F_A) = 0\right\}/\bar{\cG}_{s+1}\, ,
\end{equation*}
as well as the subspace of irreducible $\Spin(7)$-instantons
\begin{equation*}
\frM^*_{s} = \frM_{s} \cap \cB^*_{s} = \left\{{{A}}\in{\cA}^*_{s}\,\, |\,\, \pi_{7}(F_{A}) = 0\right\}/\cG_{s+1} 
= \frM_{s} \cap \bar{\cB}^*_{s} = \left\{{{A}}\in{\cA}^*_{s}\,\, |\,\, \pi_{7}(F_{A}) = 0\right\}/\bar\cG_{s+1}\,
\end{equation*}

\begin{remark}
We equip $\frM_{s}$ and $\frM^{\ast}_{s}$ with the subspace topologies, inherited from $\cB_{s}$.
\end{remark}

Under suitable conditions, we will obtain that $\bar{\frM}^{\ast}_{s}$ is a smooth finite-dimensional manifold.

\begin{definition}
We define the following linear operator of infinite-dimensional Hilbert spaces
\begin{eqnarray} 
\label{eq:Lnabla}
L_{A}\colon \Omega^{1}_s(\frg_{E}) &\to & \Omega^{0}_{s-1}(\frg_{E}) \oplus \Omega^{2}_{7,s-1}(\frg_{E})\, , \nonumber\\
\tau &\mapsto & d_{A}^{\ast}\tau\oplus \pi_{7}\left(d_{A}\tau\right)\, .
\end{eqnarray}
\end{definition}

\begin{lemma}
\label{lemma:LAelliptic}
The linear operator (\ref{eq:Lnabla}) is elliptic.
\end{lemma}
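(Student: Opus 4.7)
My plan is to prove ellipticity by showing that the principal symbol of $L_{A}$ is an isomorphism at every nonzero covector. First I would write down the symbol: since $d_{A}$ has principal symbol $\sigma_{\xi}(d_{A})(\tau)=\xi\wedge\tau$, $d_{A}^{\ast}$ has principal symbol $\sigma_{\xi}(d_{A}^{\ast})(\tau)=-\iota_{\xi^{\sharp}}\tau$, and $\pi_{7}$ is zeroth-order, one reads off
\begin{equation*}
\sigma_{\xi}(L_{A})(\tau)\,=\,-\iota_{\xi^{\sharp}}\tau\,\oplus\,\pi_{7}(\xi\wedge\tau)\,\in\,\bigl(\Lambda^{0}\oplus\Lambda^{2}_{7}\bigr)_{x}\otimes(\frg_{E})_{x}.
\end{equation*}
The dimension count $\dim\Lambda^{1}=8=1+7=\dim(\Lambda^{0}\oplus\Lambda^{2}_{7})$ already reduces ellipticity to checking that this symbol is injective for $\xi\neq 0$.

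To verify injectivity, rather than working directly from the explicit model $\Omega_{0}$ of \eqref{eq:Omega0} (which would amount to computing the eigenvalue structure of the map $\tau\mapsto\pi_{7}(\xi\wedge\tau)$ in an adapted orthonormal frame), I would invoke Proposition \ref{prop:DiracLA}. Applied with the vector bundle $\frg_{E}$ (carrying the connection induced by $A$) in place of $E$, that proposition identifies the twisted Dirac operator $D_{T}^{-}\colon\Omega^{0}(S^{-}\otimes\frg_{E})\to\Omega^{0}(S^{+}\otimes\frg_{E})$, under the isomorphisms $S^{-}\cong\Lambda^{1}$ and $S^{+}\cong\Lambda^{0}\oplus\Lambda^{2}_{7}$ of \eqref{eq:isospinoforms}, with the differential operator $\tau\mapsto d_{A}^{\ast}\tau\oplus\pi_{7}(d_{A}\tau+\iota_{\tau}T)$. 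Hence $L_{A}$ and $D_{T}^{-}$ differ only by the zeroth-order algebraic term $\tau\mapsto 0\oplus\pi_{7}(\iota_{\tau}T)$, so they share the same principal symbol.

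Now $D_{T}^{-}$ is a generalized Dirac operator constructed from the Clifford-compatible connection $\nabla^{T}\otimes 1+1\otimes\nabla_{A}$, so its principal symbol at a covector $\xi$ is Clifford multiplication by $\xi$ (tensored with the identity on $\frg_{E}$). Since $\xi\cdot\xi=-|\xi|^{2}\Id$ in the Clifford algebra, this map is invertible for every $\xi\neq 0$, with explicit inverse $-|\xi|^{-2}\,\xi\cdot(-)$. Therefore $\sigma_{\xi}(L_{A})$ is an isomorphism at every nonzero $\xi$, which is precisely the ellipticity of $L_{A}$.

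The only real point that has to be checked honestly is the matching of the two descriptions of the symbol, namely that the map $\tau\mapsto-\iota_{\xi^{\sharp}}\tau\oplus\pi_{7}(\xi\wedge\tau)$ does transport, through the isomorphisms \eqref{eq:isospinoforms} and Clifford multiplication by $\eta$, to Clifford multiplication by $\xi$. This compatibility is exactly what Proposition \ref{prop:DiracLA} packages for us, so no new analytic or representation-theoretic input is needed beyond citing it; this is why I expect the argument to be short rather than obstacle-laden.
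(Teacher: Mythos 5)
Your proof is correct and follows essentially the same route as the paper: both identify the principal symbol of $L_{A}$ with that of the twisted Dirac operator $D^{-}_{T}$ by appealing to Proposition \ref{prop:DiracLA} (the discrepancy $\tau\mapsto\pi_{7}(\iota_{\tau}T)$ being zeroth-order) and then invoke ellipticity of Dirac operators. Your write-up merely makes explicit the symbol computation and the Clifford-multiplication invertibility that the paper's terse two-sentence proof leaves implicit.
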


\begin{proof}
By proposition \ref{prop:DiracLA} the symbol of $L_{A}$ is equal to the symbol of the Dirac operator associated to the Ivanov connection of the underlying $\Spin(7)$-structure coupled to the connection induced by $A$ on the endomorphism bundle. Hence the result follows.
\end{proof}

\begin{prop} \label{prop:Cinfty}
Let $A\in{\cA}_{s}$ be a $\Spin(7)$-instanton. Then $A$ is gauge equivalent to a smooth connection. 
Furthermore, for any $s > \frac{1}{2} \dim(M)$ we have the identification $\frM\cong \frM_s$ and 
$\frM$ naturally becomes a second-countable, Hausdorff, metrizable topological space. 
\end{prop}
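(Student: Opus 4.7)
The plan is to prove the three assertions in turn: existence of a smooth gauge representative via elliptic bootstrap, the identification $\frM \cong \frM_s$, and the topological properties.

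First I would establish smoothness by passing to a Coulomb gauge and iterating elliptic regularity. Pick a smooth connection $A_{0}$ that is $L^{2}_{s}$-close to $A$ (smooth connections are dense in $\cA_{s}$). By Corollary \ref{cor:slice} there is $u \in \cG_{s+1}$ such that $A' := u \cdot A = A_{0} + \tau$ with $d_{A_{0}}^{\ast}\tau = 0$ and $\|\tau\|_{s}$ small. Combining this with $\pi_{7}(F_{A'}) = 0$ and expanding $F_{A'} = F_{A_{0}} + d_{A_{0}}\tau + \tau \wedge \tau$ yields a semilinear elliptic system of the form
\begin{equation*}
L_{A_{0}}(\tau) = \bigl(0,\, -\pi_{7}(F_{A_{0}}) - \pi_{7}(\tau \wedge \tau)\bigr),
\end{equation*}
where $L_{A_{0}}$ is the elliptic operator \eqref{eq:Lnabla} of Lemma \ref{lemma:LAelliptic}, the inhomogeneous piece is smooth, and the nonlinearity is quadratic in $\tau$. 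Since $s > \tfrac{1}{2}\dim M$, Sobolev multiplication $L^{2}_{s} \cdot L^{2}_{s} \hookrightarrow L^{2}_{s}$ keeps $\tau \wedge \tau$ in $L^{2}_{s}$, so elliptic regularity upgrades $\tau$ from $L^{2}_{s}$ to $L^{2}_{s+1}$; iterating this step yields $\tau \in C^{\infty}$, hence $A'$ is a smooth $\Spin(7)$-instanton gauge equivalent to $A$.

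Second, for the identification $\frM \cong \frM_{s}$: the natural map $\frM \to \frM_{s}$ induced by $\cA \hookrightarrow \cA_{s}$ and $\cG \hookrightarrow \cG_{s+1}$ is surjective by the first step. For injectivity, if two smooth $\Spin(7)$-instantons $A_{1}, A_{2}$ satisfy $A_{2} = u \cdot A_{1}$ for some $u \in \cG_{s+1}$, then the weak identity arising from the gauge relation, combined with iterated Sobolev multiplication as in the proof of Lemma \ref{lemma:s1}, upgrades $u$ to $L^{2}_{t}$ for every $t \ge 0$, and so $u$ is smooth.

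Third, $\frM_{s} \subset \cB_{s}$ is Hausdorff by Theorem \ref{thm:localorbispace}, and second countability is inherited from $\cA_{s}$, a separable affine Hilbert space, through the open quotient map $\pi\colon \cA_{s} \to \cB_{s}$. Metrizability can be obtained either by verifying directly that the pseudo-metric
\begin{equation*}
d\bigl([A_{1}], [A_{2}]\bigr) := \inf_{u \in \cG_{s+1}} \|u \cdot A_{1} - A_{2}\|_{s}
\end{equation*}
separates distinct orbits, the non-degeneracy being exactly the closedness of gauge orbits supplied by Lemma \ref{lemma:convergence}, or, more intrinsically, from the local slice presentations $T_{A,\epsilon}/\Gamma_{A}$ of Corollary \ref{cor:slice}, on which $\Gamma_{A}$ acts on a Hilbert space, together with second countability and the standard metrization theorems.

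The principal obstacle is the bootstrap of step one: arranging the Coulomb plus $\Spin(7)$-instanton system in a genuinely elliptic form, and verifying that the quadratic term $\tau \wedge \tau$ stays in the expected Sobolev class at every iteration (this is where $s > \tfrac{1}{2}\dim M$ is essential). Step two is formally analogous, and step three is essentially a formal consequence of the slice theorem of the previous section.
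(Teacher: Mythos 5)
Your bootstrap argument for the existence of a smooth gauge representative (step one) and the injectivity/surjectivity of the comparison map (step two) follow the same route as the paper, and both are fine in outline. The serious gap is in your treatment of the identification $\frM \cong \frM_{s}$.

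You conclude $\frM \cong \frM_{s}$ after showing only that the natural map $\mathfrak{i}\colon \frM \to \frM_{s}$ is a bijection. But $\frM$ carries the quotient of the Fr\'echet ($C^{\infty}$-convergence) topology on $\cA$ while $\frM_{s}$ carries the quotient of the $L^{2}_{s}$-topology, and $\frM$ is not compact, so a continuous bijection is not automatically a homeomorphism. This is where the real work is: one must show that $\mathfrak{i}$ is closed (or that its inverse is continuous). The paper proves closedness by a nontrivial elliptic estimate: if $[A_{n}] \to [A]$ in $\frM_{s}$, pass to the slice $A + \tau_{n}$ with $\|\tau_{n}\|_{s} \to 0$, solve $L_{A}(\tau_{n}) + \bigl(0, \tfrac{1}{2}[\tau_{n},\tau_{n}]\bigr) = 0$, decompose $\tau_{n} = \tau_{n}^{0} + \tau_{n}^{\perp}$ along $\ker d_{A}^{\ast} = \HH^{1}_{A} \oplus \ker(\pi_{7}\circ d_{A})^{\perp}$, use the spectral gap of $L_{A}$ to get $\|\tau_{n}^{\perp}\|_{s+k+1} \lesssim \|\tau_{n}\|_{s+k}^{2}$, and use finite-dimensionality of $\HH^{1}_{A}$ to control $\tau_{n}^{0}$ in all higher norms; inducting on $k$ then gives $\tau_{n} \to 0$ in $C^{\infty}$. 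Without this (or some substitute), your argument establishes a continuous bijection and then simply asserts properties of $\frM_{s}$, but it does not transfer them to $\frM$ with its own topology.

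Your proposed pseudo-metric $d([A_{1}],[A_{2}]) = \inf_{u}\|u\cdot A_{1} - A_{2}\|_{s}$ also sidesteps rather than solves this: even if it metrizes $\frM_{s}$, that says nothing about whether the $C^{\infty}$-topology on $\frM$ agrees with it. The remaining assertions (Hausdorffness from Theorem \ref{thm:localorbispace}, second countability from the open quotient of the separable space $\cA_{s}$) are fine once the homeomorphism is in place.
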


\begin{proof}
First we prove that if $A_{I}\in \cA_{s}$ is a $\Spin(7)$-instanton, then there exists a gauge transformation $u\in \cG_{s+1}$ such that $u\cdot A\in \cA$, that is, $u\cdot A$ is smooth. 
Let then $A_{I}$ be a $\Spin(7)$-instanton. Since smooth connections are dense in $\cA_{s}$ when equipped with the $L^2_{s}$-topology, for every $\delta > 0$ there exists a smooth connection $A_{S}$ such that $\norm{A_{I}-A_{S}}_{s} < \delta$. We apply now theorem \ref{thm:localorbispace} to $A_{S}$, obtaining the existence of a $\epsilon_{A_{S}} > 0$ such that for any connection $A \in \cA_{s}$ satisfying $\norm{A-A_{S}}_{s} < \epsilon_{A_{S}} $ there exists a unique gauge transformation $u\in\cG_{s+1}$ such that
\begin{equation*}
u\cdot A = A_{S} + \tau\, , \qquad d^{\ast}_{A}\tau = 0 \, ,\qquad  \norm{\tau}_{s} < \epsilon_{A_{S}}\, .
\end{equation*}
We will take $\epsilon_{A_{S}}$ to be the supremum positive real number for which theorem \ref{thm:localorbispace} applies. We claim now that for every $A\in \cA_{s}$, and in particular for $A_{I}$, there exists a smooth connection $A_{S}$ such that $\norm{A-A_{S}}_{s}<\epsilon_{A_{S}}$. To prove this we use the fact that $\cA_{s}$ is a metric space and in particular Fr\'echet-Urysohn. Therefore, there exists a sequence $\left\{A^{n}_{S}\right\}$ of smooth connections converging to $A$ in the $L^2_{s}$-topology, i.e.
\begin{equation*}
\lim_{n\to\infty}\norm{A^{n}_{S} - A}_{s} = 0\, . 
\end{equation*} 
In addition, the sequence $\left\{A^{n}_{S}\right\}$ implies the existence of a sequence of positive numbers $\left\{\epsilon_{A^{n}_{S}}\right\}\subset \mathbb{R}^{+}$. We must have now $\epsilon_{A^{n}_{S}} > \norm{A^{n}_{S}-A}_{s}$ for at least one $n = n_{0}\in \mathbb{N}$. Otherwise $\epsilon_{A^{n}_{S}} \leq \norm{A^{n}-A}_{s}$ for all $n$ and hence  $\lim_{n\to\infty} \epsilon_{A^{n}_{S}} = 0$, which implies, since $\left\{A^{n}_{S}\right\}$ converges to $A$,  that there is no $\epsilon > 0$ satisfying theorem \ref{thm:localorbispace} when applied to $A$, and hence there is no slice $T_{A,\epsilon}$ around $A$, whence running into a contradiction with theorem theorem \ref{thm:localorbispace} and hence proving the initial claim, since $\epsilon_{A^{n_{0}}_{S}} >\norm{A^{n_{0}}_{S}-A}_{s}$ for at least one $n_{0}\in\mathbb{N}$. 

Let then $A_{S}$ be a smooth connection such that $\norm{A_{I}-A_{S}}<\epsilon_{A_{S}}$. By theorem \ref{thm:localorbispace}, there exists a unique gauge transformation $u\in\cG_{s+1}$ such that
\begin{equation*}
u\cdot A_{I} = A_{S} + \tau\, , \qquad d^{\ast}_{A_{S}}\tau = 0 \, ,\qquad  \norm{\tau}_{s} < \epsilon_{A_{S}}\, .
\end{equation*}
Since $A_{I}$ is a $\Spin(7)$-instanton and $F_{A_I}=F_{A_S}+ d_{A_{S}} \tau +\frac{1}{2} [\tau, \tau]$, 
it follows that $\tau$ satisfies the following equation
\begin{equation*}
\pi_{7}(F_{A_{S}}) + \pi_{7} \left( d_{A_{S}} \tau +\frac{1}{2} [\tau, \tau]\right) = 0 \, .
\end{equation*}
We can rewrite the previous equation together with gauge-fixing condition as follows
\begin{equation*}
L_{A_{S}}(\tau) + \left(0, \frac{1}{2} [\tau, \tau]\right) = (0,- \pi_{7}( F_{A_{S}})) \, ,
\end{equation*}
where $L_{A_{S}}\colon \Omega^{1}_s(\frg_{E})\to  \Omega^{0}_{s-1}(\frg_{E}) \oplus \Omega^{2}_{7,s-1}(\frg_{E})$ 
is the linear operator of infinite-dimensional Hilbert spaces defined in equation \eqref{eq:Lnabla}, which by lemma \ref{lemma:LAelliptic} is elliptic. 
Clearly $F_{A_{S}}$ is smooth and by the Sobolev multiplication theorem the term $\frac{1}{2} [\tau, \tau]\in \Omega^{2}_{s}(\frg_{E})$ is in $L^2_{s}$. 
Therefore, applying the regularity theorem for elliptic operators on Sobolev spaces to the equation above we conclude that 
$\tau\in \Omega^{1}_{s+1}(\frg_{E})$. Repeating the argument, $\tau\in \Omega^1_{s+k}(\frg_E)$, for all $k>0$, and hence $\tau$ is smooth.
This implies that $u\cdot A_I$ is a smooth connection.

Let us now consider the map 
\begin{equation*}
\mathfrak{i}\colon\frM \to \frM_{s}\, ,
\end{equation*}
where the topology of $\frM$ is induced by the Fr\'echet topolgoy of $\cA$ given by the $C^\infty$-convergence.
In particular, $\mathfrak{i}$ is continuous. The previous argument shows that the map is surjective. We see that it is injective as follows:
let $A_1,A_2$ be two smooth $\Spin(7)$-instantons, and suppose that $\mathfrak{i}([A_1])=\mathfrak{i}([A_2])$ in $\cB_s$. Then there exists $u\in \cG_{s+1}$ such
that $A_2=u\cdot A_1$. By Lemma \ref{lemma:s1}, we have that $u\in \cG_{s+k}$ for all $k>0$. Hence $u$ is smooth and $[A_1]=[A_2]$ in $\frM$.
Finally, let us see that $\mathfrak{i}$ is a closed map. Suppose that there is a sequence $[A_n]$ in $\frM$ and $[A]\in \frM$ such that $\mathfrak{i}([A_n])\to \mathfrak{i}([A])$,
we have to see that $[A_n]\to [A]$ in $\frM$.
Consider the slice $T_{A,\epsilon}$, then there exist some $u_n\in\cG_{s+1}$ such
that $u_n\cdot A_n\to A$ in $L^2_s$, $u_n\cdot A_n=A+\tau_n$, $d_A^*\tau_n=0$, and $\norm{\tau_n}_s\to 0$.
Therefore, as before 
\begin{equation*}
L_{A}(\tau_n) + \left(0, \frac{1}{2} [\tau_n, \tau_n]\right) = (0, 0)\, ,
\end{equation*}
since $\pi_7(F_A)=0$. Then inductively, $\tau_n\in \Omega^1_{s+k}(\frg_E)$ and so $\tau_n$ is $C^\infty$. 
Now decompose $\tau_n=\tau_n^0+\tau_n^\perp$ according to the decomposition $\ker d_A^*=\HH^1_A \x \ker (\pi_7\circ d_A)$.
Let $c>0$ be the first non-zero eigenvalue of $L_A$. Then 
 $$
 \norm{\tau_n^\perp}_{s+k+1} \leq c ^{-1} \norm{L_A(\tau_n^\perp)}_{s+k} =c ^{-1} \norm{L_A(\tau_n)}_{s+k} \leq C\norm{\tau_n}_{s+k}^2, 
 $$ 
where $C>0$ is a constant independent of $s+k$. 
Moreover all norms $\norm{\text{-}}_s$ on $\HH^1_A$ are equivalent since this is a finite-dimensional vector space. 
So if $\norm{\tau_n}_s\to 0$ then $\norm{\tau_n^0}_{s+k}\to 0$. Also by induction on $k>0$ and the above inequality,
$\norm{\tau_n^\perp}_{s+k} \to 0$. Then $\norm{\tau_n}_{s+k}\to 0$, for all $k>0$. Thus $\tau_n\to 0$ in the $C^\infty$-topology
and $[A_n]\to [A]$ in $\frM$.

\end{proof}

We define now the $\pi_{7}$-projection of the curvature operator
 $$
 \cF_{7,s} := \pi_{7}\circ \cF_{s} \colon {\cA}_{s}\to \Omega^{2}_{7,s-1}(\frg_{E})\, ,
$$
which is a smooth, bounded, $\cG_{s+1}$-equivariant map of infinite-dimensional Hilbert-manifolds. 
The preimage of zero under $\cF_{7,s}$ is the space of $\Spin(7)$-instantons on $E$. We clearly have
\begin{equation*}
 \frM_{s} = \left(\cF_{7,s}\right)^{-1}(0)/\bar\cG_{s+1}\, .
\end{equation*}
For simplicity we define, for every $\tau\in \Omega^{1}(\frg_{E})$, $\Psi_{A}(\tau) := \cF_{7,s}(A+\tau)$. Hence
\begin{eqnarray*} 
\Psi_{A} \colon \Omega^{1}_{s}(\frg_{E}) &\to&  \Omega^{2}_{7,s-1}(\frg_{E})\, , \nonumber\\
\tau &\mapsto& \pi_{7}\left(d_{A}\tau + \frac{1}{2}[\tau, \tau]\right)\, .
\end{eqnarray*}

We are interested in characterizing the local geometry of $\frM_s$. Consider $T_{A,\epsilon}$ as given in \eqref{eq:Tnabla}. We define the following restriction of $\Psi_{A}$
\begin{equation*}
\Psi_{A,\epsilon} := \Psi_{A}|_{T_{{{A}}, \epsilon}}\colon T_{{{A}}, \epsilon}\to \Omega^{2}_{7,s-1}(\frg_{E})\, .
\end{equation*}
In addition we define
\begin{equation*}
Z(\Psi_{A,\epsilon}) := \Psi^{-1}_{{{A}}, \epsilon}(0) \subset T_{{{A}}, \epsilon}\, .
\end{equation*}

\begin{remark}
If $A\in \cA^{\ast}$, then there exists $\epsilon >0$ such that $T_{A.\epsilon}\subset \cA^{\ast}$. In that case $Z(\Psi_{A,\epsilon})\subset \frM^{\ast}_{s}$.
\end{remark}

\begin{remark}
The space $Z(\Psi_{A,\epsilon})$ can be explicitly defined as
\begin{equation*}
Z(\Psi_{A,\epsilon}) := \left\{ A + \tau\, ,\, \tau \in \Omega^{1}_{s} (\frg_{E}) \,\, |\,\, d^{\ast}_{A} \tau = 0\, , \,\,  \Psi_{A}(\tau) = 0\, , \,\, \norm{\tau}_{s} < \epsilon\right\}\, .
\end{equation*}
Since $\Psi_{A}$ is not a linear map, $Z(\Psi_{A,\epsilon})$ is a closed subset of $T_{A,\epsilon}$ which is not a linear subspace.
\end{remark}

The following proposition follows from theorem \ref{thm:localorbispace} by restricting the homeomorphism $h$ using the instanton condition.

\begin{prop}
For sufficiently small $\epsilon>0$, the homeomorphism $h\colon T_{{{A}},\epsilon}/\Gamma_{A} \to U([{{A}}])\subset \cB_{s}$ introduced 
in theorem \ref{thm:localorbispace} induces a homeomorphism
\begin{equation*}
h\colon Z(\Psi_{A,\epsilon})/\Gamma_{A} \to U([{{A}}])\cap \frM_{s} \subset \frM_{s}\, .
\end{equation*}
\end{prop}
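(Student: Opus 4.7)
The plan is to observe that this proposition is essentially the restriction of the homeomorphism from theorem \ref{thm:localorbispace} to the zero set of the $\Spin(7)$-instanton condition inside the slice, leveraging the gauge-invariance of that condition. The first step would be to verify that $\Gamma_A \subset \cG_{s+1}$ preserves $Z(\Psi_{A,\epsilon})$. By lemma \ref{lemma:preservestab}, $\Gamma_A$ preserves $T_{A,\epsilon}$; and the condition $\pi_{7}(F_{A'}) = 0$ is $\cG_{s+1}$-equivariant, because $F_{u\cdot A'} = u\, F_{A'}\, u^{-1}$ and the pointwise projection $\pi_7$ commutes with such conjugation. Consequently, the intersection of $T_{A,\epsilon}$ with the space of $\Spin(7)$-instantons, namely $Z(\Psi_{A,\epsilon})$, is $\Gamma_A$-invariant and the quotient $Z(\Psi_{A,\epsilon})/\Gamma_A$ is well-defined as a closed subspace of $T_{A,\epsilon}/\Gamma_A$.

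Next, I would show that $h$ maps $Z(\Psi_{A,\epsilon})/\Gamma_A$ bijectively onto $U([A]) \cap \frM_s$. Injectivity is inherited from that of $h$ on the larger slice quotient. For the image: if $\tau \in Z(\Psi_{A,\epsilon})$, then $\pi_{7}(F_{A+\tau}) = \Psi_A(\tau) = 0$, so $A+\tau$ is a $\Spin(7)$-instanton and hence $h([A+\tau]) \in U([A]) \cap \frM_s$. Conversely, given $[B] \in U([A]) \cap \frM_s$, the surjectivity of $h$ onto $U([A])$ supplies a $\tau \in T_{A,\epsilon}$ with $h([A+\tau]) = [B]$; since the equivalence class $[B]$ lies in $\frM_s$ and the instanton equation is gauge-invariant, every representative, $A+\tau$ included, is a $\Spin(7)$-instanton, whence $\Psi_A(\tau) = 0$ and $[A+\tau] \in Z(\Psi_{A,\epsilon})/\Gamma_A$.

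Finally, the topological conclusion is automatic: since $\Psi_A$ is smooth as a map of Hilbert manifolds, $Z(\Psi_{A,\epsilon})$ is closed in $T_{A,\epsilon}$ in the $L^2_s$-topology that both inherit from $\cA_s$, and the induced quotient topology on $Z(\Psi_{A,\epsilon})/\Gamma_A$ coincides with the subspace topology it receives from $T_{A,\epsilon}/\Gamma_A$. The restriction of the homeomorphism $h$ to this closed subspace, with image the closed subspace $U([A]) \cap \frM_s$ of $U([A])$, is then a homeomorphism by general topology. I do not expect a real obstacle here; the proposition is essentially a bookkeeping corollary of theorem \ref{thm:localorbispace} and the $\cG_{s+1}$-equivariance of $\pi_7 \circ \cF_s$. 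The only point requiring care is the converse direction above, where one must use full gauge-invariance (not merely $\Gamma_A$-invariance) of the instanton equation to conclude that any slice representative of a class in $\frM_s$ actually lies in $Z(\Psi_{A,\epsilon})$.
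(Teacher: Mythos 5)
Your proof is correct and follows the same route the paper indicates, which simply remarks that the proposition follows by restricting the homeomorphism $h$ of theorem \ref{thm:localorbispace} using the instanton condition; you have spelled out the details—gauge-equivariance of $\pi_7\circ\cF_s$, $\Gamma_A$-invariance of $Z(\Psi_{A,\epsilon})$, and the bijection with $U([A])\cap\frM_s$—accurately. The observation that any slice representative of a class in $\frM_s$ must itself be an instanton, by full gauge-invariance, is precisely the point the paper leaves implicit.
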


In particular, ${\frM}^{\ast}$ is locally homeomorphic to $Z(\Psi_{A,\epsilon})$ for every $A\in \cA^{\ast}_{s}$. 

In order to proceed further we need to examine in more detail the zero set $Z(\Psi_{A,\epsilon})$ and the local map $\Psi_{A,\epsilon}$. 
In particular, we will show that $\Psi_{{{A}}, \epsilon}$ is Fredholm on $T_{A,\epsilon}$, a fact which will give us a local model for 
the moduli space of $\Spin(7)$-instantons in terms of the appropriate cohomology groups.

Let us recall that by definition $\Psi_{{{A}},\epsilon}$ is Fredholm on $T_{{{A}},\epsilon}$ if and only if, at every point in $\tau_0\in T_{{{A}},\epsilon}$, the derivative $D_{\tau_0}\Psi_{A,\epsilon}$ is a Fredholm linear operator of Hilbert spaces. In our case, this derivative is independent of $\tau_0$ so we will drop the subscript. It is given by
 \begin{eqnarray} 
\label{eq:linearpsi}
D\left(\Psi_{{{A}},\epsilon}\right)\colon \ker d^{\ast}_{A} &\to& \Omega^{2}_{7,s-1}(\frg_{E})\, ,\nonumber\\
\tau &\mapsto& \pi_{7}\left(d_{A} \tau\right)\, .
\end{eqnarray}

\begin{lemma}
\label{lemma:Delliptic}
Let $A\in \cA$ be a $\Spin(7)$-instanton. Then $D(\Psi_{A,\epsilon})\colon \ker\, d^{\ast}_{A} \to \Omega^{2}_{7,s-1}(\frg_{E})$ is a Fredholm linear operator.
\end{lemma}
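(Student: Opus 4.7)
The plan is to reduce Fredholmness of $D(\Psi_{A,\epsilon})$ to the ellipticity of the full operator $L_A$ established in Lemma~\ref{lemma:LAelliptic}. Since $M$ is closed, any elliptic operator between Sobolev sections is Fredholm, so $L_A\colon \Omega^1_s(\frg_E) \too \Omega^0_{s-1}(\frg_E) \oplus \Omega^2_{7,s-1}(\frg_E)$ has finite-dimensional kernel, closed image and finite-dimensional cokernel. The key observation is that for $\tau \in \ker d_A^*$ one has $L_A(\tau) = (0,\,\pi_7 d_A \tau) = (0,\,D(\Psi_{A,\epsilon})(\tau))$, so that $D(\Psi_{A,\epsilon})$ is precisely the restriction of $L_A$ to the closed subspace $\ker d_A^* \subset \Omega^1_s(\frg_E)$, identified via projection onto the second factor.

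From this identification I would derive the three Fredholm properties in turn. First, $\ker D(\Psi_{A,\epsilon}) = \ker L_A$, so it is finite-dimensional. Second, $\{0\} \oplus \im D(\Psi_{A,\epsilon}) = \im(L_A) \cap \bigl(\{0\} \oplus \Omega^2_{7,s-1}(\frg_E)\bigr)$, an intersection of two closed subspaces and therefore closed; since projection onto the second factor is a topological isomorphism on $\{0\} \oplus \Omega^2_{7,s-1}(\frg_E)$, it follows that $\im D(\Psi_{A,\epsilon})$ is closed in $\Omega^2_{7,s-1}(\frg_E)$. Third, I would exhibit the map
\[
\phi\colon \Omega^2_{7,s-1}(\frg_E)/\im D(\Psi_{A,\epsilon}) \too \bigl(\Omega^0_{s-1}(\frg_E) \oplus \Omega^2_{7,s-1}(\frg_E)\bigr)/\im L_A,\qquad [\omega] \mapsto [(0,\omega)],
\]
which is well-defined because $\omega = \pi_7 d_A \tau$ with $\tau \in \ker d_A^*$ forces $(0,\omega) = L_A(\tau) \in \im L_A$, and injective by the reverse computation: if $(0,\omega) = L_A(\tau)$, then $d_A^*\tau = 0$ and $\omega = D(\Psi_{A,\epsilon})(\tau)$. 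Hence $\coker D(\Psi_{A,\epsilon})$ embeds into the finite-dimensional space $\coker L_A$.

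The whole argument is a formal consequence of the ellipticity of $L_A$, so I do not foresee any genuine technical obstacle beyond invoking Lemma~\ref{lemma:LAelliptic} and standard compact-manifold Fredholm theory. It is worth noting that the hypothesis $\pi_7(F_A) = 0$ plays no role in the Fredholm property per se; it becomes essential only for the interpretation of these finite-dimensional spaces, since $\pi_7(F_A) = 0$ is precisely what makes $\pi_7 d_A \circ d_A = \pi_7[F_A,\,\cdot\,] = 0$ vanish, turning $\Omega^0_{s+1}(\frg_E) \xrightarrow{d_A} \Omega^1_s(\frg_E) \xrightarrow{\pi_7 d_A} \Omega^2_{7,s-1}(\frg_E)$ into the genuine deformation complex of $A$, whose first and second cohomology groups are then naturally identified with $\ker D(\Psi_{A,\epsilon})$ and $\coker D(\Psi_{A,\epsilon})$ and will govern the local model for $\frM_s$.
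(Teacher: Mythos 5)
Your argument is correct, and it is precisely the alternative route that the paper itself mentions in the remark immediately following the lemma, namely deducing Fredholmness of $D(\Psi_{A,\epsilon})$ directly from the ellipticity of $L_A$ (Lemma~\ref{lemma:LAelliptic}). The paper's displayed proof takes a different tack: it works with the three-term deformation complex
$0\to \Omega^{0}_{s+1}(\frg_{E}) \xrightarrow{d_{A}} \Omega^{1}_{s}(\frg_{E}) \xrightarrow{\pi_{7}\circ d_{A}} \Omega^{2}_{7,s-1}(\frg_E)\to 0$,
verifies exactness of its symbol complex by hand, and invokes the general theory of elliptic complexes to obtain closed ranges, finite-dimensional cohomology, and the orthogonal splitting $\Omega^1_s(\frg_E) = \im d_A \oplus \ker d_A^*$. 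The two proofs are both standard Fredholm arguments, but they buy slightly different things. The paper's version simultaneously produces the identification of $\ker D(\Psi_{A,\epsilon})$ and $\coker D(\Psi_{A,\epsilon})$ with the hypercohomology groups $\HH^1_A$ and $\HH^2_A$, which is exactly the data that Theorem~\ref{thm:frM} needs for the Kuranishi local model. Your version is shorter and establishes Fredholmness cleanly as a restriction of a Fredholm operator to a closed subspace (with kernel equal to $\ker L_A$ and cokernel embedding into $\coker L_A$), but would need one extra sentence to tie these back to the deformation-complex cohomology before being fed into the local-model theorem. One small precision worth noting: your embedding $\coker D(\Psi_{A,\epsilon}) \hookrightarrow \coker L_A$ is generally strict, with cokernel $\HH^0_A$, so it becomes an isomorphism exactly when $A$ is irreducible; but since you only need finite dimension, the embedding suffices. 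Your closing observation, that $\pi_7(F_A)=0$ is irrelevant to Fredholmness and only serves to make the sequence a genuine complex, is accurate and consistent with how the paper uses the hypothesis.
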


\begin{proof}
Let us consider the sequence
\begin{equation}
\label{eq:complex}
0\to \Omega^{0}_{s+1}(\frg_{E}) \xrightarrow{d_{A}} \Omega^{1}_{s}(\frg_{E}) \xrightarrow{\pi_{7}\circ d_{A}} \Omega^{2}_{7,s-1}(\frg_E)\to 0\, ,
\end{equation}
which in addition is a complex since $A$ is an $\Spin(7)$-instanton. The associated symbol complex is a complex of vector bundles and linear maps given by
\begin{equation}
\label{eq:symbolc}
0\to p^{\ast}(\Lambda^{0}(M)\otimes\frg_{E}) \xrightarrow{\delta_{1}} p^{\ast}(\Lambda^{1}(M)\otimes\frg_{E}) \xrightarrow{\delta_{2}} p^{\ast}(\Lambda^{2}(M)\otimes\frg_{E})\to 0\, ,
\end{equation}
where we denote by $p\colon T^{\ast}M\to M$ the corresponding projection. The linear maps $\delta_{1}$ and $\delta_{2}$ evaluated at the cotangent vector $x\in T^{\ast}M$ are given by
\begin{equation*}
\delta_{1}|_{x} = (\wedge x)\otimes \Id\, , \qquad \delta_{2}|_{x} = \pi_{7} \circ ((\wedge x)\otimes\Id)\, .
\end{equation*}

Using the previous expressions for $\delta_{1}$ and $\delta_{2}$, exactness of \eqref{eq:symbolc} 
follows by direct computation. Therefore the images of the complex \eqref{eq:complex} are closed 
subspaces of the corresponding Hilbert spaces and the associated cohomology groups are finite-dimensional. In addition we obtain
\begin{equation}
\label{eq:orthsplit}
\Omega^{1}_{s}(\frg_{E}) = d_{A}(\Omega^{0}_{s+1}(\frg_{E}))\oplus \ker d^{\ast}_{A}\, 
\end{equation}
which implies that the kernel and cokernel of \eqref{eq:linearpsi} respectively correspond to the first and second cohomology groups of the complex \eqref{eq:complex}. Since \eqref{eq:complex} is elliptic they are finite-dimensional. 
\end{proof}

\begin{remark}
Lemma \ref{lemma:Delliptic} also follows from ellipticity of the linear operator
\begin{eqnarray*} 
L_{A}\colon \Omega^{1}_s(\frg_{E}) &\to & \Omega^{0}_{s-1}(\frg_{E}) \oplus \Omega^{2}_{7,s-1}(\frg_{E})\, , \nonumber\\
\tau &\mapsto & d_{A}^{\ast}\tau\oplus \pi_{7}\left(d_{A}\tau\right)\, ,
\end{eqnarray*}
defined in \eqref{eq:Lnabla}, see lemma \ref{lemma:LAelliptic}.
\end{remark}

\begin{remark}
Let us rewrite the deformation of the $\Spin(7)$-instanton equation in terms of spinors given in (\ref{eqn:spinorial-Spin7}). 
Suppose that $A\in \cA$ satisfies $F_A\cdot\eta=0$, and let $A^{\prime} = A + \tau$ with $\tau\in \Omega^{1}(\mathfrak{g}_{E})$.
Then $F_{A'}\cdot \eta=0$ is equivalent to $( d_{A}\tau + \frac{1}{2} [\tau, \tau ])\cdot \eta = 0$.
The linearization of this equation is given by
 $$
 d_{A}\tau \cdot \eta = 0\, ,
 $$
which has to be supplemented with the gauge-fixing condition $d_{A}^\ast \tau = 0$. 

Let $\nabla^{T}$ be the spin Ivanov connection associated to the $\Spin(7)$-structure, which is the unique connection with fully antisymmetric torsion $T$ such that $\nabla^T\eta=0$. We follow now the idea proposed in \cite{Charbonneau} applied to our particular case. Associated to the Ivanov connection $\nabla^{T}$ on the spinor bundle and to the connection induced by $A$ 
on $\frg_{E}$ we consider the corresponding Dirac operator $D^{-}_{T}\colon\Omega^{0}(S^{-}\otimes\frg_{E})\to \Omega^{0}(S^{+}\otimes\frg_{E})$. 
Then for $\tau\in \Omega^{1}(\mathfrak{g}_{E})$ we have that
 \begin{equation}\label{eq:definspin}
 d_{A}\tau \cdot \eta = 0, \, d_{A}^{\ast} \tau = 0\, \quad \text{if and only if} \quad D^{-}_{T}(\tau\cdot\eta) = 0\, ,
 \end{equation}
as it happens in \cite{Charbonneau}. We define the following map
\begin{eqnarray*}
Q_{A}\colon \Omega^{1}(\mathfrak{g}_{E})  &\to & \Omega^{0}(\mathfrak{g}_{E}) \oplus \Omega^{0}(H \otimes\mathfrak{g}_{E})\, , \nonumber\\
\tau &\mapsto & d_{A}^{\ast}\tau\oplus d_{A}\tau\cdot\eta\, ,
\end{eqnarray*}
where $S^+=\la\eta\ra\oplus H$ orthogonally. Thus $Q_{A}(\tau) = 0$ if and only if equations \eqref{eq:definspin} hold.
Using the isomorphism (\ref{eqn:cI}), we have $\Omega^{0}(H \otimes\mathfrak{g}_{E}) \cong \Omega^{2}_{7}(\mathfrak{g}_{E})$, and
we can write $Q_{A}$ as
\begin{eqnarray*}
Q_{A}\colon \Omega^{1} (\mathfrak{g}_{E})  &\to & \Omega^{0}(\mathfrak{g}_{E}) \oplus \Omega^{2}_{7}(\mathfrak{g}_{E})\, , \nonumber\\
\tau &\mapsto & d_{A}^{\ast}\tau\oplus \pi_{7}\left(d_{A}\tau\right)\, ,
\end{eqnarray*}
which coincides with $L_{A}$ defined in equation \eqref{eq:Lnabla}. So, as expected, the infinitesimal deformations of the spinorial 
$\Spin(7)$-instanton condition (\ref{eqn:spinorial-Spin7}) are equivalent to the infinitesimal deformations of the $\Spin(7)$-instanton equation.
\end{remark}

The cohomology groups of the complex \eqref{eq:complex} are given by
\begin{eqnarray*}
\label{eq:hyperc}
\HH^{0}_{A} &=& \ker d_{A} = \coker  L_{A} \cap \Omega^{0}_{s+1}(\frg_{E})\, ,\nonumber \\
\HH^{1}_{A} &=& \frac{\ker (\pi_{7}\circ d_{A})}{\img(d_{A})} = \ker  L_{A} \cap \Omega^{1}_s(\frg_{E})\, , \\
\HH^{2}_{A} &=& \frac{\Omega^{2}_{7,s-1}(\frg_{E})}{\img (\pi_{7}\circ d_{A})} = \coker L_{A} \cap \Omega^{2}_{s-1}(\frg_{E})\, .\nonumber 
\end{eqnarray*}

Here $\HH^{0}_{A}$ is the space of infinitesimal automorphisms of $A$, $\HH^{1}_{A}$ is the space of infinitesimal deformations of the $\Spin(7)$-instanton $A$ and $\HH^{2}_{A}$ is the space of infinitesimal obstructions. If a connection ${{A}}$ is irreducible then $\HH^{0}_{A} = 0$. We say that $A$ is \emph{regular} if $\HH^{2}_{A} = 0$. 

\begin{prop} \label{prop:index}
The index of $L_{A}$ is given by 
\begin{equation*}
\index(L_A)= - (\dim \frg) \hat{A}_2(M) + \frac{1}{24} \la p_1(M), p_1(\frg_E)\ra -\frac{1}{12} (p_1(\frg_E)^2-2p_2(\frg_E))\, .
\end{equation*}
where $\hat{A}_2(M)=\frac{7p_1(M)^2-4p_2(M)}{5670}$ is the second term of the $\hat{A}$-genus of $M$.
\end{prop}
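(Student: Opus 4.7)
The plan is to apply the Atiyah--Singer index theorem, exploiting the fact that $L_A$ is a twisted Dirac-type operator. Concretely, by Proposition \ref{prop:DiracLA} together with the isomorphisms \eqref{eq:isospinoforms}, which give $\Omega^1(\mathfrak{g}_E)\cong \Gamma(S^-\otimes\mathfrak{g}_E)$ and $\Omega^0(\mathfrak{g}_E)\oplus\Omega^2_7(\mathfrak{g}_E)\cong\Gamma(S^+\otimes\mathfrak{g}_E)$, the operator $L_A$ agrees with the twisted Ivanov--Dirac operator $D^-_T$ up to a zeroth-order term involving the torsion. Hence $L_A$ and $D^-_T$ share the same principal symbol, so $\index(L_A)=\index(D^-_T)=-\index(D^+_T)$, the last equality because $D^{-}_T$ is the formal adjoint of $D^+_T$.

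Next, I would invoke the Atiyah--Singer index theorem to write
$$\index(D^+_T)=\int_M \hat{A}(M)\,\mathrm{ch}(\mathfrak{g}_E\otimes\mathbb{C}),$$
so only the degree-$8$ component of the product matters. For the two characteristic classes I would use the standard expansion $\hat{A}(M)=1-\tfrac{1}{24}p_1(M)+\hat{A}_2(M)+\cdots$, and for the complexified adjoint bundle the identities $c_{2i}(\mathfrak{g}_E\otimes\mathbb{C})=(-1)^i p_i(\mathfrak{g}_E)$ together with the (rational) vanishing of odd Chern classes. Since $\mathfrak{g}$ is semisimple, $c_1(\mathfrak{g}_E\otimes\mathbb{C})=0$ as well, and a short computation then produces
$$\mathrm{ch}(\mathfrak{g}_E\otimes\mathbb{C})=\dim\mathfrak{g}+p_1(\mathfrak{g}_E)+\tfrac{1}{12}\bigl(p_1(\mathfrak{g}_E)^2-2p_2(\mathfrak{g}_E)\bigr)+\cdots.$$
Multiplying by $\hat{A}(M)$, extracting the degree-$8$ component and applying the overall minus sign recovers the claimed formula.

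The main obstacle is simply bookkeeping: one must track signs and conventions carefully, in particular the sign change between $D^+$ and $D^-$ and the Chern-to-Pontryagin conversion for the real bundle $\mathfrak{g}_E$. Everything else reduces to a routine expansion of standard characteristic-class identities, and the fact that the Ivanov connection is not torsion-free is irrelevant at the level of the index, which depends only on the principal symbol.
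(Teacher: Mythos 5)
Your proof follows essentially the same route as the paper: identify $L_A$ with the twisted Dirac operator $D^-_T$ via Proposition~\ref{prop:DiracLA}, observe that passing from the Ivanov connection to the Levi-Civita connection changes only a zeroth-order term and hence not the index, then apply the Atiyah--Singer theorem and expand $\hat{A}(M)\,\mathrm{ch}(\frg_E\otimes\mathbb{C})$ in Pontryagin classes. The sign bookkeeping (tracking $D^-$ vs.\ $D^+$ and the Chern-to-Pontryagin conversion $c_{2i}(\frg_E\otimes\mathbb{C})=(-1)^i p_i(\frg_E)$ with $c_1=0$) is handled correctly and recovers the stated formula.
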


\begin{proof}
Proposition \ref{prop:DiracLA} shows that
\begin{equation*}
\index(L_A) = \index(D^{-}_{T}) = \index(D^{-})\, ,
\end{equation*}
where $D^{-}\colon \Omega^{0}(S^{-}\otimes\frg_{E}) \to \Omega^{0}(S^{+}\otimes \frg_{E})$ is the Dirac operator associated to the Levi-Cevita spin connection and the connection induced by $A$ in $\frg_{E}$. In eight dimensions the chiral complex spin bundles $S^{\pm}_{\mathbb{C}}$ are the complexifications of the real chiral spin bundles $S^{\pm}$. Let $D^{-}_{\mathbb{C}}\colon \Omega^{0}(S^{-}_{\mathbb{C}}\otimes (\frg_{E}\otimes\mathbb{C})) \to \Omega^{0}(S^{+}_{\mathbb{C}}\otimes (\frg_{E}\otimes\mathbb{C}))$ denote the $\mathbb{C}$-linear extension of $D^{-}$. We have
\begin{equation*}
\index(D^{-}) = \index_{\mathbb{C}}(D^{-}_{\mathbb{C}}) = - \int_{M} \hat{A}(M) \mathrm{ch}(\frg_{E}\otimes\mathbb{C})\, ,
\end{equation*}
where we have used the index theorem of the chiral complex spin bundle of a $8k$-dimensional manifold coupled to the complexification $(\mathfrak{g}_{E}\otimes \mathbb{C})$ of $\mathfrak{g}_{E}$. Expanding now $\hat{A}(M)$ and $\mathrm{ch}(\frg_{E}\otimes\mathbb{C})$ in terms of the Pontrjagin numbers of $M$ and $\frg_{E}$ we obtain
\begin{equation*}
\index L_A= - (\dim \frg) \hat{A}_2(M) + \frac{1}{24} \la p_1(M), p_1(\frg_E)\ra -\frac{1}{12} (p_1(\frg_E)^2-2p_2(\frg_E))\, .
\end{equation*}
\end{proof}

For a non-integrable $\Spin(7)$-manifold $(M,\Omega)$, we define
\begin{equation} \label{eqn:b27}
b^2_7 = \frac{7p_1(M)^2-4p_2(M)}{5670}- 1 + b^1\, .
\end{equation}
so $\hat{A}_{2}(M)=1-b_1+b_7^2$ and hence the index of $L_{A}$ is formally equal to the index in the $\Spin(7)$-holonomy case, although only in the latter case $b^{1}$ and $b^{2}_{7}$ have the interpretation of the Betti numbers of $M$.

\begin{thm}
\label{thm:frM}
Let ${{A}}\in \frM_{s}$. Then there exist a neighborhood $U([{{A}}])$ of $[{{A}}]$ and a $\Gamma_{A}$-equivariant map
\begin{equation*}
f\colon \HH^{1}_{A} \to \HH^{2}_{A}\, ,
\end{equation*}
such that $U([{{A}}])$ is homeomorphic to $f^{-1}(0)/\Gamma_{A}$.
\end{thm}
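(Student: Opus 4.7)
The strategy is the standard Kuranishi construction, adapted to our setting. By Theorem \ref{thm:localorbispace} together with the proposition preceding the theorem, a neighborhood of $[A]$ in $\frM_s$ is homeomorphic to $Z(\Psi_{A,\epsilon})/\Gamma_A$, where
$Z(\Psi_{A,\epsilon})=\{\tau\in\ker d_A^*:\|\tau\|_s<\epsilon,\ \Psi_A(\tau)=0\}$ and $\Psi_A(\tau)=\pi_7(d_A\tau+\tfrac12[\tau,\tau])$. So it suffices to produce a $\Gamma_A$-equivariant map $f\colon\HH^1_A\to\HH^2_A$, defined on a neighborhood of $0$, whose zero set is homeomorphic to $Z(\Psi_{A,\epsilon})$ near $\tau=0$.

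Write $\Psi_A(\tau)=L(\tau)+Q(\tau)$ with $L=D\Psi_{A,\epsilon}\colon\ker d_A^*\to\Omega^2_{7,s-1}(\frg_E)$, $L(\tau)=\pi_7(d_A\tau)$, and $Q(\tau)=\tfrac12\pi_7([\tau,\tau])$. By Lemma \ref{lemma:Delliptic}, $L$ is Fredholm with $\ker L\cong\HH^1_A$ and $\coker L\cong\HH^2_A$. Using the $L^2$-orthogonal decompositions
\begin{equation*}
\ker d_A^*=\HH^1_A\oplus K,\qquad \Omega^2_{7,s-1}(\frg_E)=\im L\oplus \HH^2_A,
\end{equation*}
where $K$ and $\HH^2_A$ are the orthogonal complements of $\HH^1_A$ and $\im L$ respectively, the restriction $L|_K\colon K\to\im L$ is a topological isomorphism of Hilbert spaces. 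Since $\Gamma_A$ preserves $A$, it commutes with $d_A$, $d_A^*$, $\pi_7$, and the Lie bracket on $\frg_E$, hence acts by isometries preserving $L$ and $Q$; consequently the above decompositions are $\Gamma_A$-invariant.

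Decompose $\tau=\alpha+\beta$ with $\alpha\in\HH^1_A$ and $\beta\in K$, and let $\Pi$ denote the $L^2$-orthogonal projection onto $\im L$, so $1-\Pi$ projects onto $\HH^2_A$. The equation $\Psi_A(\tau)=0$ splits as the pair
\begin{equation*}
L(\beta)+\Pi\, Q(\alpha+\beta)=0,\qquad (1-\Pi)\, Q(\alpha+\beta)=0.
\end{equation*}
Define $\Phi\colon\HH^1_A\times K\to\im L$ by $\Phi(\alpha,\beta)=L(\beta)+\Pi\,Q(\alpha+\beta)$. Then $\Phi(0,0)=0$ and $D_\beta\Phi|_{(0,0)}=L|_K$ is an isomorphism, so the implicit function theorem in the Hilbert-space setting produces a smooth $\Gamma_A$-equivariant map $\beta\colon U\to K$, defined on a neighborhood $U$ of $0$ in the finite-dimensional space $\HH^1_A$, with $\beta(0)=0$ and $\Phi(\alpha,\beta(\alpha))\equiv 0$; equivariance is automatic from uniqueness of the solution and $\Gamma_A$-equivariance of $\Phi$. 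Set
\begin{equation*}
f\colon U\subset\HH^1_A\to\HH^2_A,\qquad f(\alpha)=(1-\Pi)\,Q(\alpha+\beta(\alpha)),
\end{equation*}
which is $\Gamma_A$-equivariant. Shrinking $\epsilon$ if necessary, the correspondence $\alpha\mapsto\alpha+\beta(\alpha)$ is a $\Gamma_A$-equivariant homeomorphism between $f^{-1}(0)$ and $Z(\Psi_{A,\epsilon})$, whence passing to $\Gamma_A$-quotients gives the desired homeomorphism between $f^{-1}(0)/\Gamma_A$ and a neighborhood $U([A])$ of $[A]$ in $\frM_s$.

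The only genuinely delicate point is the application of the implicit function theorem: one needs $K$ to be a closed Hilbert subspace of $\ker d_A^*$ (which follows from finite-dimensionality of $\HH^1_A$), $L|_K\to\im L$ to be a bounded isomorphism (which follows from $\im L$ being closed, itself a consequence of the Fredholm property of $L_A$ in Lemma \ref{lemma:Delliptic}), and the quadratic map $Q$ to be smooth from $\ker d_A^*\subset\Omega^1_s(\frg_E)$ to $\Omega^2_{7,s-1}(\frg_E)$, which is the content of the Sobolev multiplication $L^2_s\otimes L^2_s\to L^2_{s-1}$ for $s>\tfrac12\dim M$. The rest is bookkeeping.
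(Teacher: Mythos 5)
Your proof is correct and follows essentially the same approach as the paper's, namely the standard Kuranishi finite-dimensional reduction: split $\ker d_A^*$ and $\Omega^2_{7,s-1}(\frg_E)$ orthogonally using $\HH^1_A$, $\HH^2_A$, solve the projected equation by the implicit function theorem, and take $f$ to be the remaining finite-dimensional obstruction map. The paper's write-up invokes ``Fredholm theory'' for the local normal form $\Psi_{A,\epsilon}(u_1,u_2)=(F_1(u_1),F_2(u_1,u_2))$ somewhat tersely, whereas you make the implicit-function-theorem step and the $\Gamma_A$-equivariance argument explicit, but the underlying argument and the resulting map $f$ are the same.
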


\begin{proof}
We have defined the map
\begin{equation*}
\Psi_{{{A}}, \epsilon}\colon T_{{{A}}, \epsilon}\to \Omega^{2}_{7,s-1}(\frg_{E})\, ,
\end{equation*}
whose zero level set $Z(\Psi_{A})$ modulo $\Gamma_{A}$ gives a local model of $\frM_{s+1}$ around $[{{A}}]\in \frM_{s+1}$. Its differential $D \Psi_{{{A}}, \epsilon}\colon 
\ker d^{\ast}_{A} \to \Omega^{2}_{7,s-1}(\frg_{E})$ at every point in $T_{A,\epsilon}$, in particular at $0\in T_{A,\epsilon}$, is Fredholm and has closed range. 
Therefore, there are decompositions
\begin{equation*}
  \ker d^{\ast}_{A} = \ker (\pi_{7}\circ d_{A})^{\perp} \x \HH^1_A\, , \qquad \Omega^{2}_{7,s-1}(\frg_{E}) = \ker (\pi_7\circ d_{A})^{\ast} \x \HH_A^2\, ,
\end{equation*} 
in terms of the hypercohomology groups defined in \eqref{eq:hyperc}. By the Open Mapping theorem $\Psi_{A,\epsilon}|_{V_{0}}\colon 
V_{0}= \ker (\pi_{7}\circ d_{A})^{\perp} \to W_{0}=\ker (\pi_7\circ d)^{\ast}_{A}$ is a Hilbert-space isomorphism. 
Using now Fredholm theory and the fact that $\Psi_{A,\epsilon}(0) = 0$ we conclude that there exist neighborhoods $U(0)\subset T_{A,\epsilon} $ and $V(0) \subset \Omega^{2}_{7,s-1}(\frg_{E})$ of zero such that
\begin{itemize}
\item $U(0) = U_{1}\times U_{2}$ and $V(0) = V_{1}\times V_{2}$ with $U_{2}\subset \HH^{1}_{A}$ and $V_{2}\subset \HH^{2}_{A}$ are (necessarily finite-dimensional) neighborhoods of zero.
	
\item For every $u:=(u_{1},u_{2})\in U_{1}\times U_{2}$ we have	
\begin{equation}
\label{eq:splitPsi}
\Psi_{A,\epsilon}(u_{1},u_{2}) = (F_{1}(u_{1}), F_{2}(u_{1},u_{2}))\, ,
\end{equation}
where $F_{1}\colon U_{1}\to V_{1}$ is a diffeomorphism of Hilbert manifolds and $F_{2}\colon U_{1}\times U_{2}\to V_{2}$ is a differentiable map of Hilbert manifolds.
\end{itemize}
Using equation \eqref{eq:splitPsi} we can characterize $\Psi^{-1}_{A,\epsilon}(0)$ as the pre-image of zero by the differentiable map
\begin{eqnarray*}
 f \colon U_{2}\subset \HH^{1}_{A} &\to& V_{2}\subset \HH_A^{2}\, ,\\
u_{2} &\mapsto & F_{2}(0,u_{2})\, ,
\end{eqnarray*}
and hence we conclude.
\end{proof}

\begin{remark}
For an irreducible connection $[A] \in {\frM}^{\ast}_{s}$ we conclude that there exists a neighborhood $U([A])$ of $[A]$ homeomorphic to 
${f}^{-1}(0)$. If in addition $\HH^{2}_{A} =0$ we deduce that ${\frM}^{\ast}_{s}$ is locally modelled on the vector  $\HH^{1}_{A}$ 
and hence we conclude that ${\frM}^{\ast}_{s}$ is a smooth manifold. In section \ref{sec:transversality} we are going to study how 
generic is the situation for which $\HH^{2}_{A} = 0$  in terms of a generic choice of $\Spin(7)$-structure.
\end{remark}

\subsection*{The case $\frg_{E} = \fru_{E}$  with fixed determinant} \label{sec:moduli-localucase}

In this section we consider the particular case $\frg_{E} = \fru_{E}$, fixing in addition a connection on the determinant line bundle. Let $E\to M$ be a hermitian 
complex vector bundle of rank $r$, over an $8$-manifold $M$ endowed with a $\Spin(7)$-structure given by a four-form $\Omega$. 
The structure group of the vector bundle is $\G=\U(r)$.
As explained in section \ref{sec:Spin(7)rep}, we have a decomposition $\Lambda^2=\Lambda^2_7\oplus \Lambda^2_{21}$ with projections $\pi_7:\Lambda^2\to \Lambda^2_7$ 
and $\pi_{21}\colon\Lambda^2\to \Lambda^2_{21}$.  Let $L=\det E$ be the determinant line bundle, where we fix a connection $\Lambda$. 
Note that there is a decomposition $\fru_E=\RR \oplus \frs\fru_E$, where the $\RR$-summand correspond to the trace of the connection. The space of connections with fixed
determinant is  
  $$
 \cA^\Lambda=\{ A\in \cA \, | \, \tr A=\Lambda\}.
 $$
Fixing $A_0\in \cA^\Lambda$, any other connection $A=A_0+\tau$ has $\tau \in \Omega^1(\frs\fru_E)$. Therefore
$\cA^\Lambda=A_0+ \Omega^1(\frs\fru_E)$. Let $\alpha\in \Omega^1$ be the curvature of $\Lambda$. Any 
connection $A\in \cA^\Lambda$ has curvature splitting as $F_A=(\tr F_A)\Id + F_A^0$, where $F_A^0$ is the 
trace-free part of the curvature. We have that $\tr F_A=\alpha$. So the curvature is defined by 
the variable part $F_A^0\in \Omega^2(\frs\fru_E)$. We give the following definition

\begin{definition}\cite{Tian} \label{def:Spinstanton2}
A conneciton $A\in \cA^{\Lambda}$ is a $\Spin(7)$-instanton if $\pi_{7}(F_A^0) = 0$. 
\end{definition}

Finally, we consider the gauge group 
 $$
 \cG^\Lambda=\{ g\in \cG \, | \, \det g=\Id\},
 $$
which acts on $\cA^\Lambda$, since the action on the determinant line bundle $L=\det E$ is trivial, and hence it does not move the connection.
There is a space of connections modulo gauge 
 $$
 \cB^\Lambda=\cA^\Lambda/\cG^\Lambda\, ,
 $$
and a moduli space of $\Spin(7)$-instantons $\frM^\Lambda =\{[A] \, | \, \pi_7(F_A^0)=0\}$. 

As in the general case we Sobolev-complete $\cA^{\Lambda}$ as to obtain $\cA^{\Lambda}_{s}$, as well as the other spaces.
Given $A\in \cA^{\Lambda}_{s}$ a slice around $A$ for the action of $\cG_{s+1}^\Lambda$ is given by
\begin{equation*}
T_{{{A}},\epsilon} := \left\{ A + \tau\, | \, \tau \in \Omega^{1}_{s} (\frs\fru_{E}) , \, d^{\ast}_{A} \tau = 0 , \, 
\norm{\tau}_{s} < \epsilon\right\}\subset \cA^{\Lambda}_{s}\, .
\end{equation*}
Let $A\in \cA_{s}^\Lambda$ be a $\Spin(7)$-instanton and let $A'={{A}}+\tau \in{\cA}_{s}^\Lambda$ be another connection, 
$\tau \in \Omega^{1}_{s} (\frs\fru_{E})$. Then
 \begin{equation*}
 \pi_7(F_{A'}^0) = \pi_7 \left( d_A\tau+ \frac{1}{2}[\tau, \tau]\right) \, .
 \end{equation*}
Therefore $A+\tau$ satisfies the $\Spin(7)$-instanton equation if and only if 
$\pi_7 \left( d_A\tau+ \frac{1}{2}[\tau, \tau]\right)=0$.
So the theory proceeds now exactly as in the general case, with $\frs\fru_{E}$ playing the role of $\frg_{E}$.
There is a deformation complex 
\begin{equation}\label{eqn:compllex-su}
0\to \Omega^{0}_{s+1}(\frs\fru_{E}) \xrightarrow{d_{A}} \Omega^{1}_{s}(\frs\fru_{E}) \xrightarrow{\pi_{7}\circ d_{A}} \Omega^{2}_{7,s-1}(\frs\fru_E)\to 0\, ,
\end{equation}
with the operator
 \begin{eqnarray*} 
L_{A}\colon \Omega^{1}_s(\frs\fru_{E}) &\to & \Omega^{0}_{s-1}(\frs\fru_{E}) \oplus \Omega^{2}_{7,s-1}(\frs\fru_{E})\, , \nonumber\\
\tau &\mapsto & d_{A}^{\ast}\tau\oplus \pi_{7}\left(d_{A}\tau\right)\, .
\end{eqnarray*}
Proposition \ref{prop:index} about the index of $L_{A}$ can be refined for the special case considered in this section.

\begin{prop} \label{prop:index2}
The index of $L_{A}$ is given by 
\begin{align*}
\index L_A = &  - (r^2-1)(1-b_1+b_2^7) 
+ \frac{1}{24}  \la p_1(M),-2r\,  c_2(E) + (r-1) c_1(E)^2\ra \\
& - \left( \frac{r+7}{12} c_1(E)^4-\frac{r+6}3 c_1(E)^2 c_2(E) + \frac{r+3}3 c_1(E)c_3(E) +\frac{r+6}6 c_2(E)^2 -\frac{r}3 c_4 (E)\right).
\end{align*}
\end{prop}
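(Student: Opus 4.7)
The strategy is to specialize Proposition \ref{prop:index} to the case $\frg_E = \frs\fru_E$, since the deformation complex (\ref{eqn:compllex-su}) and the elliptic operator $L_A$ in this section are structurally identical to those appearing in Proposition \ref{prop:index}, with the only modification being the replacement of the adjoint bundle $\frg_E$ by $\frs\fru_E$. The first term $-(r^2-1)(1-b_1+b_7^2)$ then appears immediately as $-\dim(\frs\fru(r))\,\hat{A}_2(M)$, upon using the definition $b_7^2 = \hat{A}_2(M) -1 + b_1$ from (\ref{eqn:b27}).

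What remains is to express the Pontryagin classes $p_1(\frs\fru_E)$ and $p_2(\frs\fru_E)$, which enter the formula of Proposition \ref{prop:index}, in terms of the Chern classes of $E$. The real bundle $\frs\fru_E$ has complexification $\frs\fru_E\otimes_\RR \CC \cong \mathfrak{sl}_E = \ker(\tr:\End(E)\to\mathcal{O})$, and a choice of Hermitian metric on $E$ gives a splitting $\End(E) \cong \mathcal{O}\oplus \mathfrak{sl}_E$, so $c_k(\mathfrak{sl}_E) = c_k(\End(E))$ for all $k\geq 1$. Since $\mathfrak{sl}_E$ is a complexification, its odd Chern classes vanish rationally, and $p_k(\frs\fru_E) = (-1)^k c_{2k}(\mathfrak{sl}_E) = (-1)^k c_{2k}(E\otimes E^\ast)$.

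I then compute $c_2(E\otimes E^\ast)$ and $c_4(E\otimes E^\ast)$ via the splitting principle. Writing formally $c(E) = \prod_{i}(1+x_i)$, one has the factorization
\[
c(E\otimes E^\ast) \;=\; \prod_{i,j}(1 + x_i - x_j) \;=\; \prod_{i<j}\bigl(1 - (x_i-x_j)^2\bigr),
\]
from which $c_2 = -\sum_{i<j}(x_i-x_j)^2$ and $c_4 = \tfrac{1}{2}\bigl[(\sum_{i<j}(x_i-x_j)^2)^2 - \sum_{i<j}(x_i-x_j)^4\bigr]$. Rewriting the power sums $p_k = \sum_i x_i^k$ in terms of the elementary symmetric polynomials via Newton's identities gives $p_1(\frs\fru_E) = (r-1)c_1(E)^2 - 2r c_2(E)$ (matching the term already written inside the pairing with $p_1(M)$) and an explicit quartic polynomial expression for $p_2(\frs\fru_E)$ in the $c_i(E)$.

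Substituting these into Proposition \ref{prop:index} and expanding $p_1(\frs\fru_E)^2 - 2 p_2(\frs\fru_E)$ produces the stated third term. The hard step is not conceptual but combinatorial: the $c_4$-level expansion, involving $c_1(E)^4$, $c_1(E)^2 c_2(E)$, $c_1(E)c_3(E)$, $c_2(E)^2$ and $c_4(E)$, must be carried out carefully and then combined with $p_1(\frs\fru_E)^2$. There is no new analytic input beyond Proposition \ref{prop:index}; the entire content is the characteristic-class translation from $p_\bullet(\frs\fru_E)$ to $c_\bullet(E)$.
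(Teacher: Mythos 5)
Your proposal is correct and follows essentially the same route as the paper: specialize Proposition~\ref{prop:index} to $\frg_E=\frs\fru_E$, then translate the Pontryagin classes of $\frs\fru_E$ into Chern classes of $E$ via the identification of $\frs\fru_E\otimes_\RR\CC$ with the trace-free endomorphisms and the splitting principle. The only cosmetic difference is that the paper compresses the degree-4 computation by invoking the identity $\tfrac{1}{12}(p_1(\frg_E)^2-2p_2(\frg_E))=\mathrm{ch}_4(\frg_E)=\mathrm{ch}_4(\End E)$, whereas you expand $p_1^2-2p_2$ directly from the formal roots; these are interchangeable bookkeeping choices for the same combinatorial calculation.
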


\begin{proof}
In the particular case of $\frg=\su(r)$, a Chern class calculation gives:
\begin{align*}
&  p_1(\frg_E) = -2r\,  c_2(E) + (r-1) c_1(E)^2\, , \\
& \frac{1}{12}(p_1(\frg_E)^2-2p_2(\frg_E)) = \mathrm{ch}_4(\frg_E)=\mathrm{ch}_4(\End E) =\\
& \qquad = \frac{r+7}{12} c_1(E)^4-\frac{r+6}3 c_1(E)^2 c_2(E) + \frac{r+3}3 c_1(E)c_3(E) +\frac{r+6}6 c_2(E)^2 -\frac{r}3 c_4 (E)\, .
\end{align*}
We substitute in the formula in proposition \ref{prop:index}.
\end{proof}

\begin{remark}
The formula in proposition \ref{prop:index2} specializes to that of reference \cite{Walpuski} when $c_1=0$. 
\end{remark}

\begin{remark}\label{rem:index3}
Proposition \ref{prop:index2} specializes for the case of rank $r=2$ bundles to 
$$
 - 3 (1-b_1+b_2^7) 
+ \frac{1}{24}  \la p_1(M),-4 c_2(E) + c_1(E)^2)\ra  - \left( \frac34 c_1(E)^4-\frac83 c_1(E)^2 c_2(E) +\frac43 c_2(E)^2 \right).
 $$
\end{remark}

\section{Transversality of the moduli space of $\Spin(7)$-instantons}
\label{sec:transversality}

Let $(M,\Omega)$ be a $\Spin(7)$-manifold, and let $E$ be an rank $r$ complex vector bundle endowed with an hermitian metric. Fix a connection $\Lambda$ on the determinant line bundle $L=\det E$ of $E$, and let $A\in{\cA}^\Lambda$ be a $\Spin(7)$-instanton. Associated to it, we have a deformation complex (\ref{eqn:compllex-su}).
We denote by $\HH^0_A,\HH^1_A,\HH^2_A$ its hypercohomology groups. 
Recall that if $A$ is irreducible then $\HH^0_A=0$. We say that $A$ is regular if $\HH^2_A=0$. By theorem \ref{thm:frM}, 
if $A$ is irreducible and regular, then ${\frM}_s^*$ is a smooth manifold around $A$, of finite dimension. 
In general, regularity does not hold. In this section, we shall study in detail how to perturb the equations to get regularity. 

Let $\cS(M):= \Omega^{0}(\S(M))$ be the space of $\Spin(7)$-structures on $M$, namely the space of smooth sections of $\S(M)\subset \Lambda^{4}(M)$. We shall consider tensors of type $C^k$, for some large $k$, and give $\cS(M)$ the $C^k$-topology, so that it becomes a Banach manifold. Associated to each $\Omega\in \cS(M)$ there is a projector $P_\Omega$:
\begin{eqnarray*}
P_\Omega \colon \Omega^{2}_{s-1}(\frs\fru_{E}) &\to & \Omega_{P_\Omega,s-1}^2(\frs\fru_{E})\\
\beta &\mapsto & \frac12 \big( \beta +*_\Omega (\Omega \wedge \beta) \big)\, , 
\end{eqnarray*}
where $\Omega_{P_\Omega,s-1}^2(\frs\fru_{E})$ denotes the space of forms taking values in $\frs\fru_{E}$ of type $\Omega_{7,s-1}^2(\frs\fru_{E})$ with respect to $\Omega$. Let $\Omega_0\in\cS(M)$ be a fixed $\Spin(7)$-structure, and $P_0=P_{\Omega_0}$ the associated projector. For $\Omega$ near $\Omega_0$, the projection
$$
P_0 : \Omega_{P_\Omega,s-1}^2(\frs\fru_{E}) \to \Omega_{P_0,s-1}^2(\frs\fru_{E})
$$
is an isomorphism. Therefore, the equations 
  $$
  P_\Omega(F_A^0)=0 \iff P_0(P_\Omega(F_A^0))=0
  $$
are equivalent, but the second one has a fixed target space. We now consider the map 
\begin{eqnarray*}
\cL: \cA_s \x \cS(M) & \too & \Omega_{P_0,s-1}^2(\su_E) \\
(A,\Omega) & \mapsto & P_0(P_\Omega(F_A^0))
\end{eqnarray*}
which corresponds to a parametric version of the $\Spin(7)$-instanton equation.

To prove that the moduli space for some $\Omega\in \cS(M)$ is regular at any connection $A$ with $\cL(A,\Omega)=0$, we need to prove that $D_1\cL$ is surjective, where $D_1$ denotes the differential with respect to the first variable. The general set up is as follows: suppose $\cU,\cS,\cW$ are Banach manifolds and $F:\cU\x \cS \to \cW$ is a smooth map such that $F_s: \cU\to \cW$, $F_s(x)=F(x,s)$ is Fredholm for any $s\in S$. Suppose that $F$ is a submersion over a point $0\in \cW$, that is $D_{(x,s)} F:T_x \cU \x T_s \cS\to T_0\cW$ is onto for $F(x,s)=0$. Then $\cM=F^{-1}(0) \subset \cU\x \cS$ is a smooth Banach manifold. The projection $\Pi: \cU\x \cS\to \cS$ restricts to a
smooth map $\Pi|_{\cM}: \cM\to \cS$, and 
 \begin{align*}
 & \ker \left( D_{(x,s)} \Pi|_{\cM} \right) \cong \ker D_x F_s \, ,\\
 & \coker \left( D_{(x,s)} \Pi|_{\cM} \right) \cong \coker D_x F_s \, . 
 \end{align*}
So $\Pi|_{\cM}$ is a smooth Fredholm map between Banach manifolds. Recall now the Sard-Smale 
theorem. 

\begin{thm}[{\cite{Smale}}] \label{thm:Smale}
Let $\phi\colon \cM\to \cS$ be a $C^q$-Fredholm map between separable Banach manifolds, with $q>0, q>\index \phi$. Then the set of regular values of $\phi$ is residual in $\cS$. In particular, it is dense.
\end{thm}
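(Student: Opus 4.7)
The plan is to reduce the infinite-dimensional statement to the classical finite-dimensional Sard theorem through a local Fredholm normal form, combined with a countable covering argument coming from separability.

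First I would reformulate the conclusion: the set of regular values is residual if and only if the set of critical values of $\phi$ is meager, i.e.\ contained in a countable union of nowhere-dense subsets of $\cS$. Since $\cM$ and $\cS$ are separable Banach manifolds they are second countable, so it suffices to prove that every critical point $x_0\in \cM$ admits an open chart neighbourhood $U\subset \cM$ whose image under $\phi$, restricted to the critical locus, is nowhere dense in a coordinate chart of $\cS$ around $\phi(x_0)$. Residuality of the regular values then follows from the Baire property of $\cS$ upon covering the critical locus by countably many such charts.

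Next I would pass to the local Fredholm normal form. Work in Banach charts $\phi\colon U\subset X\to Y$ around $x_0$ and $y_0=\phi(x_0)$. Since $D\phi_{x_0}$ is Fredholm, split $X=K\oplus X_1$ with $K=\ker D\phi_{x_0}$ finite-dimensional and $X_1$ closed, and $Y=R\oplus R^c$ with $R=\mathrm{im}\, D\phi_{x_0}$ closed and $R^c$ finite-dimensional; by definition $\dim K-\dim R^c=\index\phi$. Writing $\phi=(\phi_R,\phi_{R^c})$, the component $\phi_R$ has derivative at $x_0$ restricting to an isomorphism $X_1\to R$. The Banach implicit function theorem straightens out this component: after $C^q$-changes of coordinates in domain and target one may assume $\phi(k,x_1)=(x_1,\psi(k,x_1))$ for some $C^q$-map $\psi$ into $R^c$. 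In these coordinates a point $(k,x_1)$ is critical for $\phi$ precisely when $k$ is critical for the finite-dimensional slice map $\psi_{x_1}\colon K\to R^c$, $k\mapsto \psi(k,x_1)$, and the critical value in the chart is $(x_1,\psi_{x_1}(k))$.

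The final step is classical Sard. For each fixed $x_1$, $\psi_{x_1}$ is a $C^q$-map between finite-dimensional spaces of dimensions $\dim K$ and $\dim R^c$, so the hypothesis $q>\index\phi=\dim K-\dim R^c$ gives exactly the exponent $q>\max(\dim K-\dim R^c,0)$ needed for Sard's theorem. Its set of critical values has Lebesgue measure zero in $R^c$; by the product structure of the normal form, the critical values of $\phi$ in the chart meet every slice $\{x_1\}\times R^c$ in a measure-zero set, and the local properness afforded by the Fredholm hypothesis shows that this image is closed in the chart and hence nowhere dense in $Y$. A countable cover together with Baire's theorem applied to $\cS$ then concludes. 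The main obstacle I anticipate is the regularity bookkeeping in the normal-form reduction, namely ensuring that the implicit-function diffeomorphisms are genuinely $C^q$ (and no smoother), so that the finite-dimensional residue $\psi$ retains exactly enough regularity for Sard to apply with the sharp exponent $q>\index\phi$; the other delicate point is the passage from ``measure zero in each slice'' to ``nowhere dense in $\cS$'', where one really uses the Fredholm closedness of the range and not merely the rank splitting.
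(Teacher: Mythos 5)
The paper does not prove this theorem; it cites Smale's 1965 paper and uses the result as a black box. Your sketch is a faithful reconstruction of Smale's original argument: reduce to the finite-dimensional Sard theorem via the Fredholm normal form $\phi(k,x_1)=(x_1,\psi(k,x_1))$ obtained from the implicit function theorem, observe that a point is critical for $\phi$ exactly when it is critical for the finite-dimensional slice map $\psi(\cdot,x_1)\colon K\to R^c$, apply classical Sard (for which the hypothesis $q>\max(\index\phi,0)$, $q$ an integer, is precisely $q\geq\max(\dim K-\dim R^c+1,1)$), and then pass from measure-zero slices to nowhere-density via closedness of the local critical image, which follows from a compact exhaustion of the finite-dimensional kernel factor $K$. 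The two subtleties you flag at the end are exactly the right ones, and you handle them correctly: the coordinate changes produced by the implicit function theorem are only $C^q$ (so $\psi$ inherits exactly $C^q$ regularity, no more), and the passage from ``measure zero in each $R^c$-slice'' to ``nowhere dense'' genuinely requires the local closedness of the critical image, which in turn uses that $\bar B_K$ is compact because $K$ is finite dimensional (i.e.\ the Fredholm hypothesis). One small point worth making explicit when you write this out: when you restrict to a compact ball $\bar B_K\subset K$ and shrink the $X_1$-factor, you should verify that the resulting set of critical values is closed by the sequential argument (extract a convergent subsequence in $K$, note the critical locus is closed since $\phi$ is $C^1$), and a countable union of such products exhausts each chart; this is implicit in your ``countable covering'' remark but deserves a line.
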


Now consider a regular value $s_0\in \cS$ for $\Pi|_{\cM}$. Then $\coker D_x F_{s_0}=\coker \left(D_{(x,s_0)} \Pi|_{\cM}\right)= 0$, for
any $(x,s_0)$ such that $F_{s_0}(x)=F(x,s_0)=0$. This means that $F_{s_0}:\cU\to \cW$ is regular over the point $0\in \cW$,
and $F_{s_0}^{-1}(0)\subset \cU$ is a smooth manifold. 

Thus, going back to our original problem, if we prove that  $D_1\cL$ is surjective whenever $\cL(A,\Omega)=0$, then we have the required transversality 
for a dense set of $\Omega'\in \cS(M)$.

\begin{prop} \label{prop:regularity}
Let $\Omega_0\in \cS(M)$, and let $A\in  \cA_s$ be a $\Spin(7)$-instanton with respect to $\Omega_0$. Let $\psi \in \Omega^2_{7,s-1} (\frs\fru_E)$ be $L^2$-orthogonal to the  image of $D_{(A,\Omega_0)}\cL$. 
Then $d_A^*\psi=0$ and $\tr (F_A^0\wedge \psi)=0$.
\end{prop}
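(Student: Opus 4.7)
The plan is to split the total differential $D_{(A,\Omega_0)}\cL=D_A\cL+D_\Omega\cL$ into the partial derivatives in the connection and $\Spin(7)$-structure arguments, and to read the two required identities off the orthogonality condition applied separately to each summand.

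First, I would handle the $A$-variation. For $\tau\in\Omega^1_s(\frs\fru_E)$ one has $F_{A+\tau}^0=F_A^0+d_A\tau+O(\tau^2)$. Since $\Omega=\Omega_0$ is held fixed, $P_0\circ P_{\Omega_0}=P_0\circ P_0$, which by the eigenvalue description of $\Lambda^2$ under $\beta\mapsto *_0(\beta\wedge\Omega_0)$ recalled in Section~\ref{sec:Spin(7)rep} is a positive multiple of the orthogonal projection $\pi_7$ onto $\Omega^2_7(\frs\fru_E)$. Therefore $D_A\cL(\tau)$ is proportional to $\pi_7(d_A\tau)$. Testing with $\psi$, using $\pi_7\psi=\psi$ and the self-adjointness of $\pi_7$, the orthogonality condition reduces to $\langle\psi,d_A\tau\rangle_{L^2}=0$ for every $\tau$, and integration by parts delivers $d_A^*\psi=0$.

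Next I would turn to the $\Omega$-variation. For $\omega\in T_{\Omega_0}\cS(M)=\Omega^4_1\oplus\Omega^4_7\oplus\Omega^4_{35}$ (Proposition~\ref{prop:tangent}), differentiating $P_\Omega(\beta)=\tfrac12(\beta+*_\Omega(\Omega\wedge\beta))$ in $\Omega$ gives
\begin{equation*}
D_\Omega\cL(\omega)=P_0\bigl(\dot P_\Omega(\omega)F_A^0\bigr),\qquad \dot P_\Omega(\omega)F_A^0=\tfrac12\bigl(\dot *_\Omega(\omega)(F_A^0\wedge\Omega_0)+*_0(F_A^0\wedge\omega)\bigr).
\end{equation*}
The key representation-theoretic ingredient is Proposition~\ref{prop:wedges}: since $\psi\in\Omega^2_7(\frs\fru_E)$ and $F_A^0\in\Omega^2_{21}(\frs\fru_E)$ (the latter because $A$ is a $\Spin(7)$-instanton), the scalar $4$-form $\tr(F_A^0\wedge\psi)$ takes values pointwise in $\Lambda^4_7\oplus\Lambda^4_{35}$, so only these two components must be eliminated. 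I would first restrict to $\omega\in\Omega^4_7$: by Proposition~\ref{prop:tangent} this is tangent to $\Spin(7)$-structures compatible with a fixed metric, so $\dot*_\Omega(\omega)=0$ and only the second term of $\dot P_\Omega(\omega)F_A^0$ survives. Using $\langle a,b\rangle_{\frs\fru_E}=-\tr(ab)$ together with $**=\Id$ on $\Lambda^6$ in dimension eight, the orthogonality condition collapses to $\int_M\tr(F_A^0\wedge\psi)\wedge\omega=0$ for every $\omega\in\Omega^4_7$, and a Schur-type argument (using that $\Omega^4_7\subset\Omega^4_+$ is self-dual and orthogonal to $\Omega^4_{35}$ under the wedge pairing) forces the $\Omega^4_7$-component of $\tr(F_A^0\wedge\psi)$ to vanish. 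An analogous argument for $\omega\in\Omega^4_{35}$, after using the instanton identity $F_A^0\wedge\Omega_0=-*_0F_A^0$ to rewrite the first term and then linearizing the Hodge star in that direction, kills the remaining $\Omega^4_{35}$-component.

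The principal obstacle is precisely this last step: computing $\dot*_\Omega(\omega)$ for $\omega\notin\Omega^4_7$ requires linearizing the highly non-linear dependence of the metric $g_\Omega$ on $\Omega$, and then recognizing the combined variation $\dot P_\Omega(\omega)F_A^0$ as a $\Spin(7)$-equivariant operation on $F_A^0$ and $\omega$ to which the representation-theoretic framework of Section~\ref{sec:Spin(7)rep}—in particular Proposition~\ref{prop:wedges}—can be applied uniformly. Once this is done, the Schur-type orthogonality argument eliminates the $\Omega^4_{35}$-component and yields $\tr(F_A^0\wedge\psi)=0$.
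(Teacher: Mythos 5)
Your proposal shares some key ingredients with the paper: the $A$-variation yields $d_A^*\psi=0$ exactly as you describe, and Proposition~\ref{prop:wedges} is precisely the tool that confines $\tr(F_A^0\wedge\psi)$ to $\Lambda^4_7\oplus\Lambda^4_{35}$ pointwise. Your handling of $\omega\in\Omega^4_7$ is also correct: since the metric does not vary in those directions (Proposition~\ref{prop:tangent}), only the algebraic term $*_0(F_A^0\wedge\omega)$ survives in $\dot P_\Omega(\omega)F_A^0$, and the orthogonality condition reduces to $\int_M\omega\wedge\tr(F_A^0\wedge\psi)=0$ for such $\omega$.

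However, there is a genuine gap at the step you yourself flag. For $\omega\in\Omega^4_{35}$ you must actually carry out the linearization of $\Omega\mapsto *_\Omega$ — a computation that requires the induced variation of $g_\Omega$ and its effect on the $\Lambda^2$-inner product — and then show that the resulting term recombines into the wedge pairing. You do not perform it, so the $\Lambda^4_{35}$-component of $\tr(F_A^0\wedge\psi)$ is left unconstrained and the conclusion is not reached.

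The paper sidesteps the Hodge-star linearization entirely. Its device is the observation that, for \emph{every} $\Omega\in\cS(M)$, the exact (not merely linearized) identity
$$
\int_M g_\Omega\bigl(*_\Omega(\Omega\wedge F_A^0),\psi\bigr)\,dV_\Omega \;=\; \int_M \Omega\wedge\tr(F_A^0\wedge\psi)
$$
holds, and the right-hand side is linear in $\Omega$ with no Hodge star at all. Differentiating both sides at $\Omega_0$ in direction $\omega$, the left side produces $\la D_{\Omega_0}\cP(\omega)(F_A^0),\psi\ra$ plus a term from the variation of the metric and volume form, which the paper argues drops out because $P_{\Omega_0}(F_A^0)=0$ ($A$ being a $\Spin(7)$-instanton); the right side gives $\int_M\omega\wedge\tr(F_A^0\wedge\psi)$. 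The orthogonality hypothesis then yields $\int_M\omega\wedge\tr(F_A^0\wedge\psi)=0$ uniformly for all $\omega\in T_{\Omega_0}\cS(M)=\Omega^4_1\oplus\Omega^4_7\oplus\Omega^4_{35}$, with no case-by-case analysis of tangent directions and no need to ever compute $\dot *_\Omega$. This is exactly the ``recognizing the combined variation as a $\Spin(7)$-equivariant operation'' step you anticipate but cannot close: the wedge-product identity packages the metric-dependence of the Hodge star so that it never has to be differentiated directly.
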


\begin{proof}
Let $\cP:\cS(M) \to \Hom(\Omega^2_{s-1}(M), \Omega^2_{s-1}(M))$ be the map $\cP(\Omega)=P_\Omega$. Then the map $\cL(\Omega,A)=P_0(P(F_A^0))$ has linearization
 \begin{equation}\label{eqn:DL}
   D_{(A,\Omega_0)} \cL(a,\omega)= P_0(d_A a) +  P_0(D_{\Omega_0} \cP(\omega)( F_A^0)).
 \end{equation}
Take $\psi\in \Omega^2_{P_{0}, s-1}(\frs\fru_{E})$ orthogonal to $D_{(A,\Omega_0)} \cL(a,\omega)$ as given in equation (\ref{eqn:DL}). Then 
 \begin{align*}
 & \la P_0(d_A a), \psi \ra =0, \qquad \text{for all }a \in \Omega^1_s(\su_E), \\
 & \la  P_0(D_{\Omega_0} \cP(\omega)( F_A^0)), \psi\ra =0, \qquad \text{for all } \omega\in T_{\Omega_0} \cS(M).
 \end{align*}
The first equation is rewritten $\la d_A a,\psi\ra=0$, since $\psi$ is already in $\Omega^2_{P_0,s-1}(\frs\fru_{E})$. Equivalently $\la a, d_A^*\psi\ra=0$, for all $a \in \Omega^1_{s}(\su_E)$. This means that
 $$
 d_A^*\psi=0.
 $$
The second equation is rewritten as  $\la  D_{\Omega_0} \cP(\omega)( F_A^0), \psi\ra =0$. To simplify it, consider the formula
 $$ 
 \int_M g_\Omega( *_\Omega (\Omega \wedge F_A^0), \psi )=  \int_M \Omega \wedge \tr (F_A^0 \wedge \psi),
 $$
which holds for all $\Omega$, where $g_\Omega$ is the scalar product induced by $\Omega$. We take its derivative at $\Omega_0$ in the direction of $\omega$, and recall that $P_\Omega(\beta)=\frac12(\beta + *_\Omega (\Omega \wedge \beta))$, so 
 $$
\la  D_{\Omega_0} \cP(\omega)( F_A^0), \psi\ra + \int_M D_{\Omega_0} \cG (\omega) (P_0(F_A^0)), \psi ) = \int_M \omega \wedge \tr (F_A^0 \wedge \psi),
 $$
where $\cG:\cS(M)\to \cM\text{et}(M)$ is the map $\cG(\Omega)=g_\Omega$.
As $A$ is a $\Spin(7)$-instanton, we have that $P_0(F_A^0)=0$. Thus the second equation is rewritten as $\int_M \omega \wedge \tr (F_A^0 \wedge \psi)=0$, for all $\omega\in  T_{\Omega_0}\cS(M)$. By proposition \ref{prop:tangent}, $T_\Omega\cS(M) = \Omega^4_1(M) \oplus \Omega^4_7(M) \oplus \Omega^4_{35}(M)$. Therefore $\tr(F_A\wedge\psi)\in \Lambda^4_{27}(M)$. On the other hand, $F_A\in \Lambda^2_{21}(\su_E)$ and $\psi \in \Lambda^2_7(\su_E)$. By proposition \ref{prop:wedges}, $\tr(F_A\wedge \psi) \in  (\Lambda^4_7(M) \oplus \Lambda^4_{35}(M))$. This means that $\tr(F_A\wedge\psi)=0$.
\end{proof}

Now we shall take more general type of perturbations. Fix a background $\Spin(7)$-structure $\Omega_0$,
and therefore also a corresponding metric. Let $\cP(M)$ be the set of all orthogonal projectors $P\colon\Lambda^2(M) \to \Lambda^2(M)$ of rank-seven. As before, we consider tensors of class $C^{k}$, for suitable large $k$, so that $\cP(M)$ becomes a Banach manifold. We consider the projector $P_0\in \cP(M)$ associated to $\Omega_0$, and the perturbed equation
 $$
 P(F_A^0)=0,
 $$
for $A\in\cA^{\Lambda}_s$. Let  
 $$
 \frM_s^P=\{ A\in \cA^{\Lambda}_s \, | \, P(F_A^0)=0\}/\cG_{s+1}
 $$
be the perturbed moduli space. We shall consider $\frM_s^P$ only for $P$ near $P_0$. Note that in this case
 $$
 P(F_A^0)=0 \iff P_0(P(F_A^0))=0,
 $$
and the second equation takes values in a fixed space $\Omega^2_{P_0,s-1}(\frs\fru_{E})$. Consider the functional 
 \begin{eqnarray*}
 \cF_P: \cA_s^\Lambda & \to & \Omega_{P,s-1}^2(\su_E)\, , \\
  A &\mapsto & P(F_A^0)
 \end{eqnarray*}
and the functional $\cF^0_P(A)=P_0(P(F_A^0))\colon \cA_s^\Lambda \to \Omega_{P_{0},s-1}^2(\su_E)$. We want to prove that the moduli space $\frM_s^P$ is regular  at an irreducible connection $A$. This means the surjectivity of $D_A \cF_P$ at any $A$ with $\cF_P(A)=0$, which in turn is equivalent to the surjectivity of $D_A\cF^0_P$.

As before, we consider the parametric version of the perturbed equation, given by the map
  \begin{eqnarray*}
  \cL: \cA_s \x \cP(M) & \too & \Omega_{P_0,s-1}^2(\su_E) \\
   (A,P) & \mapsto & P_0(P (F_A^0)).
 \end{eqnarray*}
To prove that the moduli space $\frM_s^P$, for some $P\in \cP(M)$, is regular at any irreducible connection $A$ with $\cL(A,P)=0$, we need to prove that $D_1\cL$ is surjective, where $D_1$ denotes the differential with respect to the first variable. If that case holds then, by our previous argument, for a dense
second category subset of $P$ near $P_0$ (in the topology of  $\cP(M)$), we will have that $D_A \cF^0_P$ is surjective, for generic $P$, since $\cL(A,P)=\cF^0_P(A)$. This will complete the required transversality.

\begin{prop} \label{prop:transvers}.
Let $\Omega_0\in \cS(M)$, $P_0=P_{\Omega_0}$, and let $A\in  \cA^{\Lambda}_{s}$ be a $\Spin(7)$-instanton with respect to $\Omega_0$. Let $\psi \in \Omega^2_{7,s-1}(\su_E)$ be $L^2$-orthogonal to the  image of $D_{(A,P_0)}\cL$. Then $d_A^*\psi=0$ and $\tr (F_A^0\otimes \psi)=0$, as a section of $\Lambda^{2}_{21}(M)\otimes \Lambda^2_7(M)$.
\end{prop}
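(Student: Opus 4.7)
The plan is to follow the scheme of Proposition \ref{prop:regularity}, replacing the parameter space of $\Spin(7)$-structures by the Banach manifold $\cP(M)$ of rank-seven orthogonal projectors on $\Lambda^2(M)$, and carrying out the analogous duality argument. The key new ingredient is an explicit description of $T_{P_0}\cP(M)$ that allows the second orthogonality relation to be upgraded from a statement on the wedge product to one on the full tensor product $\Lambda^2_{21}\otimes\Lambda^2_7$.

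First I would compute the linearization at $(A,P_0)$. Since $A$ is a $\Spin(7)$-instanton with respect to $\Omega_0$, we have $P_0(F_A^0)=0$, so $F_A^0\in \Omega^2_{21}(\su_E)$, and
$$
D_{(A,P_0)}\cL(a,p) \;=\; P_0(d_A a) \;+\; P_0\bigl(p(F_A^0)\bigr), \qquad (a,p)\in \Omega^{1}_s(\su_E)\times T_{P_0}\cP(M).
$$
Testing $\psi\in\Omega^2_{7,s-1}(\su_E)$ against $D_{(A,P_0)}\cL(a,0)$, using $P_0\psi=\psi$ and self-adjointness of $P_0$, gives $\langle d_A a,\psi\rangle_{L^2}=0$ for every $a$, so $d_A^\ast\psi=0$, exactly as in the proof of Proposition \ref{prop:regularity}.

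The second orthogonality condition $\langle p(F_A^0),\psi\rangle_{L^2}=0$ for all $p\in T_{P_0}\cP(M)$ requires identifying $T_{P_0}\cP(M)$. Differentiating $P^2=P$ and $P^\ast=P$ at $P_0$ yields $P_0p+pP_0=p$ and $p^\ast=p$. In the block decomposition relative to $\Lambda^2=\Lambda^2_7\oplus\Lambda^2_{21}$, the diagonal blocks vanish and the two off-diagonal blocks are mutual adjoints. Hence $p$ is uniquely determined by its block $p_{21,7}\colon\Lambda^2_{21}\to\Lambda^2_7$, which is an arbitrary section of $\Hom(\Lambda^2_{21},\Lambda^2_7)$. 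Because $F_A^0\in \Omega^2_{21}(\su_E)$, we have $p(F_A^0)=p_{21,7}(F_A^0)\in \Omega^2_7(\su_E)$, and tracing over the $\su_E$ factor converts the pairing into
$$
\int_M \bigl\langle p_{21,7},\ \tr(F_A^0\otimes\psi)\bigr\rangle \,\mathrm{vol} \;=\; 0,
$$
where $\tr(F_A^0\otimes\psi)\in\Gamma(\Lambda^2_{21}(M)\otimes\Lambda^2_7(M))$ is paired with $p_{21,7}\in\Gamma(\Hom(\Lambda^2_{21},\Lambda^2_7))$ via the metric identification $\Hom(\Lambda^2_{21},\Lambda^2_7)\cong \Lambda^2_{21}\otimes\Lambda^2_7$. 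Since $p_{21,7}$ ranges over an arbitrary $C^k$ section, the fundamental lemma of the calculus of variations forces $\tr(F_A^0\otimes\psi)=0$ pointwise.

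The main obstacle I anticipate is the explicit description of $T_{P_0}\cP(M)$ and verifying that its block $p_{21,7}$ sweeps out all of $\Hom(\Lambda^2_{21},\Lambda^2_7)$; one must also be careful that restricting to $C^k$-sections does not narrow the class below what is needed for the pointwise conclusion, which follows from standard smooth-partition-of-unity bump-function arguments. Once this tangent-space calculation is in place, the rest is a direct adaptation of Proposition \ref{prop:regularity}, and the stronger conclusion (tensor product rather than wedge product) reflects the extra freedom in the projector perturbation compared with a perturbation of $\Omega$.
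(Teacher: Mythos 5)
Your proof is correct and takes essentially the same approach as the paper. The only cosmetic difference is that the paper parametrizes nearby rank-seven subspaces as graphs of maps $\mu\colon\Lambda^2_7\to\Lambda^2_{21}$ and differentiates the explicit formula $P=\bigl(\begin{smallmatrix} p & p\mu^* \\ \mu p & \mu p\mu^*\end{smallmatrix}\bigr)$ at $\mu=0$, whereas you obtain the same tangent space $T_{P_0}\cP(M)$ by linearizing the constraints $P^2=P$, $P^*=P$; both give the same off-diagonal block structure, the same identification of the free parameter with a section of $\Hom(\Lambda^2_{21},\Lambda^2_7)$, and hence the same conclusion $\tr(F_A^0\otimes\psi)=0$.
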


\begin{proof}
We fix one projector $P_0$ and a decomposition $\Lambda^2=\Lambda^2_7\oplus \Lambda^2_{21}$.
Other decompositions correspond to the graph of a map
 $$ 
 \mu: \Lambda^2_7 \too \Lambda^2_{21}\, .
 $$
Here the projector is 
 $$
 P=\left(\begin{array}{cc} p & p\, \mu^* \\ \mu \, p & \mu \, p\, \mu^*\end{array}\right),
 $$
where $p=(1+\mu \mu^*)^{-1}$. We consider small perturbations around $\mu=0$, given by some ${\nu}$.
We have
 $$
 \dot P=\left(\begin{array}{cc} 0 & \nu^* \\ \nu  & \nu+\nu^* \end{array}\right).
 $$

The derivative of the map $\cL(A,P)=P_0(P(F_A^0))$ is given by 
 $$
 D_{(A,P_0)} \cL(a,\nu)=P_0 (d_{A}a) + P_0(\dot P(F_A^0))=P_0 (d_{A}a) + \nu^*(F_A^0),
 $$
where $F_A^0\in\Omega^2_{21,s-1}(\su_E)$. Let now $\psi\in \Omega^2_{7,s-1}(\su_E)$ be an
element orthogonal to the image of $D_{(A,P_0)} \cL$. This implies that 
  \begin{align*}
 & \la P_0(d_{A}a),\psi\ra=0\, , \qquad \forall\, a\in\Omega^1(\su_E), \\
 & \la \nu^* (F_A^0), \psi\ra =0\, , \qquad \forall\, \nu\in \Omega^{0}(\Hom( \Lambda^2_7,  \Lambda^2_{21}))\, .
  \end{align*}
The first equation yields that $d_{A}^*\psi=0$. The second equation is equivalent to
 $$
 \int_M \tr(\nu^*(F_A^0) ,\psi) =\la \nu, \tr (F_A^0\otimes \psi) \ra =0,
 $$
for all $\nu$, considered as a section of $\Lambda^2_{21} \otimes  \Lambda^2_7$.
Therefore
  $$
 \tr(F_A\ox \psi)=0.
 $$ 
\end{proof}

The perturbations $P\in \cP(M)$ allow to obtain transversality for the moduli spaces $\frM_s^P$ in the
specific case that $E\to M$ is a rank $2$ vector bundle.

\begin{thm} \label{thm:main1}
Suppose that $E$ is a rank $2$ vector bundle. Let $A\in  \cA^{\Lambda}_{s}$ be a $\Spin(7)$-instanton. Let $\psi \in \Omega^2_{7}(\su_E)$ such that $d_A^*\psi=0$ and $\tr (F_A^0\otimes \psi)=0$. Then $A$ is reducible or $\psi=0$.
\end{thm}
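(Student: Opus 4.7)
The plan is to assume $\psi\not\equiv 0$ and deduce that $A$ must be reducible. The argument proceeds in three steps: a pointwise linear-algebra reduction, an elliptic unique continuation, and the construction of a parallel section of $\su_E$.

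\textbf{Pointwise rank inequality.} First I unpack $\tr(F_A^0\otimes\psi)=0$ in a local orthonormal frame $\{e_1,e_2,e_3\}$ of $\su_E$ with respect to $-\tr(XY)$. Writing $F_A^0=\sum_i F_i\otimes e_i$ with $F_i\in\Lambda^2_{21}$ and $\psi=\sum_j\psi_j\otimes e_j$ with $\psi_j\in\Lambda^2_7$, the hypothesis becomes, up to a non-zero scalar, $\sum_i F_i\otimes\psi_i=0$ in $\Lambda^2_{21}\otimes\Lambda^2_7$. Expanding the $F_i$ and $\psi_j$ in bases of $\Lambda^2_{21}$ and $\Lambda^2_7$ as matrices $A\in\RR^{3\times 21}$, $B\in\RR^{3\times 7}$, this says $A^TB=0$, so the column spans of $A$ and $B$ in $\RR^3\cong\su_E|_p$ are orthogonal. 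This yields the pointwise bound
\[
\rk F_A^0(p)+\rk\psi(p)\le 3,\qquad p\in M.
\]

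\textbf{Unique continuation.} Next, since $d_A^*\psi=0$, the pair $(0,\psi)\in\Omega^0(\su_E)\oplus\Omega^2_7(\su_E)$ lies in $\ker L_A^*$, and hence in $\ker L_AL_A^*$. Because $L_A$ is elliptic (Lemma \ref{lemma:LAelliptic}), $L_AL_A^*$ is a second-order self-adjoint elliptic operator, and Aronszajn's strong unique continuation theorem then forces $\psi\equiv 0$ whenever $\psi$ vanishes on any non-empty open set. Assuming $\psi\not\equiv 0$, the set $V:=\{\psi\neq 0\}$ is therefore open and dense; by Step~1, $\rk F_A^0\le 2$ on $V$.

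\textbf{Building a parallel section and concluding.} On the open dense subset $V'\subset V$ where $\rk\psi$ is locally constant I choose a smooth local unit section $\sigma$ of $(\ker\psi)^\perp\subset\su_E$. In the generic case $\rk\psi\equiv 1$ on $V'$, so $\psi=\omega\otimes\sigma$ with $\omega\in\Omega^2_7$ nowhere zero on $V'$, and the Step~1 orthogonality reads $\langle F_A^0,\sigma\rangle_{\su_E}=0$. Expanding
\[
d_A^*(\omega\otimes\sigma)=(d^*\omega)\,\sigma-\sum_i(\iota_{e_i}\omega)\otimes\nabla^A_{e_i}\sigma=0
\]
and pairing with $\sigma$ (using $|\sigma|=1$, so $\langle\nabla^A\sigma,\sigma\rangle=0$) gives $d^*\omega=0$, while pairing with any $\tau\in\sigma^\perp\subset\su_E$ gives $\iota_{X_\tau}\omega=0$, where $X_\tau:=\sum_i\langle\nabla^A_{e_i}\sigma,\tau\rangle e_i$. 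The decisive input is that every non-zero $\omega\in\Lambda^2_7\subset\Lambda^2\RR^8$ is non-degenerate as a $2$-form: under the identification $\Lambda^2_7\cong\mathrm{Im}\,\mathbb{O}$ sending $v\in\mathrm{Im}\,\mathbb{O}$ to the skew-symmetric left-multiplication $L_v$ on $\mathbb{O}\cong\RR^8$, one has $L_vL_{\bar v}=|v|^2\Id$, so $L_v$ is invertible for $v\neq 0$. Hence $\iota_{X_\tau}\omega=0$ forces $X_\tau=0$ for each $\tau\in\sigma^\perp$, so $\nabla^A\sigma=0$ on $V'$. The curvature identity $[F_A^0,\sigma]=\nabla_A^2\sigma=0$ then confines $F_A^0$ to the centralizer $\RR\sigma\subset\su(2)$ on $V'$, i.e.\ $\rk F_A^0\le 1$ on $V'$; by continuity the same holds on all of $M$, and a further application of Bianchi with the non-degeneracy argument promotes $\sigma$ to a global parallel section of $\su_E$, yielding a $\U(1)$-reduction of $A$ and contradicting irreducibility (cf.\ Lemma \ref{lemma:GammaH} and Proposition \ref{prop:reducible}). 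The cases $\rk\psi\ge 2$ are handled analogously with even stronger bounds on $\rk F_A^0$.

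\textbf{Main obstacle.} The crux is the last step: converting the pointwise bound $\rk F_A^0\le 2$ from Step~1 into a parallel section of $\su_E$, since reducibility of an $\SU(2)$-connection corresponds to the strictly stronger condition $\rk F_A^0\le 1$. Closing this gap requires combining the first-order equation $d_A^*\psi=0$ with the Bianchi identity, and the non-degeneracy of non-zero elements of $\Lambda^2_7$, a distinguishing feature of eight-dimensional $\Spin(7)$-geometry coming from octonion multiplication, is the critical piece of linear algebra that makes the argument work.
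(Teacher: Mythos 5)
Your overall strategy — the pointwise rank bound from $\tr(F_A^0\otimes\psi)=0$, unique continuation to make $\{\psi\neq 0\}$ dense, and then extracting a parallel section of $\su_E$ — is the same as the paper's, and several of your ingredients are correct and attractive. In particular, your observation that every non-zero element of $\Lambda^2_7$ is non-degenerate as a $2$-form (via $\Lambda^2_7\cong\mathrm{Im}\,\mathbb{O}$, $v\mapsto L_v$, with $L_vL_{\bar v}=|v|^2\Id$) is a clean equivalent of the fact the paper uses, namely that wedging by $\omega\in\Lambda^2_7$ gives an isomorphism $\Lambda^1_8\to\Lambda^3_8$ (the paper phrases this via Clifford multiplication $V\otimes H\to S^-$). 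Your computation of $d_A^*(\omega\otimes\sigma)$ and the conclusion $\nabla^A\sigma=0$ on $\{\rk\psi=1\}$ is a valid reformulation of the paper's argument with $d_A(\Omega\wedge\psi)=0$.

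However, there are two genuine gaps. First, the cases $\rk\psi\geq 2$ are not in fact ``analogous'': when $\rk\psi\geq 2$ the orthogonality forces $\rk F_A^0\leq 1$, but your mechanism for producing a parallel section ($d_A^*\psi=0$ plus non-degeneracy of a rank-one $\omega$) no longer applies. To get a parallel direction there you need to run the Bianchi identity $d_AF_A^0=0$ on $F_A^0=\omega'\otimes\sigma'$ with $\omega'\in\Lambda^2_{21}$, and the relevant linear-algebra input is different: you need that no non-zero $\omega'\in\Lambda^2_{21}$ is decomposable as $\lambda\wedge\Theta$ (the paper's argument), not the non-degeneracy of $\Lambda^2_7$-forms. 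So both linear-algebra facts — one about $\Lambda^2_7$ and one about $\Lambda^2_{21}$ — are needed, and you have only one.

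Second, and more seriously, the final step — ``a further application of Bianchi with the non-degeneracy argument promotes $\sigma$ to a global parallel section'' — is exactly the content of Proposition \ref{prop:locally-reducible}, and that proposition is not a consequence of Bianchi plus linear algebra. The local parallel directions you produce on different connected components of the locally-reducible locus need not agree, and ruling this out requires the elliptic unique-continuation argument of Proposition \ref{prop:locally-reducible}: one shows that the locally-defined closed $2$-form $\omega$ with $F_A^0=\omega\otimes u$ extends by zero and satisfies $d\omega=0$, $d^*\omega=\omega\wedge W$, hence vanishes identically if the locally-reducible locus is not all of $M$, a contradiction. You flag this step as the ``main obstacle'' but the sketch you give for closing it is not adequate; as written your proof stops short of the conclusion.
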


\begin{proof}
The equation $\tr(F_A^0\ox \psi)=0$, where $F_A^0\in \Omega^2_{21}(\su_E)$ and $\psi\in \Omega^2_7(\su_E)$,
means that the (bundle) maps
 $$
 F_A^0:\Lambda^2_{21} \too \su_E, \qquad \psi:\Lambda^2_7 \too \su_E
 $$
have images which are point-wise orthogonal. As
$\su(2)$ has dimension $3$, this implies that either $F_A^0$ is a map of rank $\leq 1$
or $\psi$ is a map of rank $\leq 1$, at any point of $M$ 

Suppose first that $F_A^0=0$ in a ball. Recall that $d_A F_A^0=0$, by the Bianchi identity. As
$*F_A^0 =-\Omega \wedge F_A^0$, we have that
 $$
 d_A^*F_A^0=-*(W\wedge F_A^0),
 $$
where $W=d\Omega$. Then $F_A^0$ satisfies an elliptic equation, 
and hence $F_A^0=0$ everywhere. This means that $A$ is projectively flat, in particular it is not irreducible according to our definition.

Now suppose that $F_A^0$ has rank $1$ in a ball. We trivialize the bundle over the ball, and let $e_1\in \su (2)$ be a unitary
section
spanning the image of $F_A^0$. We complete to an orthonormal basis $\{e_1,e_2,e_3\}$, and write $F_A^0= \omega \otimes e_1$, where $\omega \in \Omega^2_{21}$. 
Write $d_A e_1= \lambda_1 e_1+\lambda_2 e_2+\lambda_3 e_3$, where $\lambda_j\in \Omega^1$. From $d_A F_A=0$ we get
 $$
 0= d_A F_A= (d \omega + \omega \wedge \lambda_1 ) \otimes e_1
+ \omega \wedge \lambda_2 \otimes e_2+ \omega \wedge \lambda_3 \otimes e_3\, .
 $$
Hence $\omega \wedge \lambda_2=0$ and $\omega \wedge \lambda_3=0$.
If $\omega$ is a $2$-form and $\lambda$ is a non-zero $1$-form with $\omega\wedge \lambda =0$, then
$\omega=\lambda\wedge\Theta$ for a $1$-form $\Theta$. But then it cannot be that $\omega\in \Lambda^2_{21}$. 
Therefore $\lambda_2=\lambda_3=0$. This implies that $d_A e_1= \lambda_1 e_1$. As
$e_1$ is unitary, we have that $d_Ae_1=0$. So $A$ is locally
reducible, because we can split $\su_E= \la e_1\ra \oplus \la e_2,e_3\ra$, and $A$ respect both summands.

Next suppose that $\psi$ has rank $1$ in a ball. Write $\psi=\omega \otimes e_1$, where 
$\omega \in \Lambda^2_7$, and $\{e_1,e_2,e_3\}$ is a local orthonormal basis of $\su_E$. 
Write $d_A e_1= \lambda_1 e_1+\lambda_2 e_2+\lambda_3 e_3$. We use the equation $d_A^*\psi=0$ and
the equality  $*\psi=3\, \Omega\wedge\psi$ to get $d_A(\Omega\wedge \psi)=0$. So
 $$
  0=d_A(\Omega \wedge \omega\otimes e_1)= 
  (d(\Omega\wedge \omega)+\Omega\wedge\omega\wedge\lambda_1) \otimes e_1
 +\Omega\wedge\omega\wedge\lambda_2 \otimes e_2+\Omega\wedge\omega\wedge\lambda_3 \otimes  e_3. 
 $$
In particular, $\Omega\wedge \omega \wedge \lambda_2 =0$ and $\Omega\wedge \omega \wedge \lambda_3 =0$. 
The map $\Omega:\Lambda^3 \to \Lambda^7$ has kernel $\Lambda^3_{48}$, so we have
$(\omega \wedge \lambda_2)_{8}=0$, where this is the component in $\Lambda^3_8$. But for any
element $\omega\in \Lambda^2_7$, the map 
 $$
 \omega:\Lambda^1_8 \to \Lambda^3_8
 $$
is an isomorphism; being a map of $\Spin(7)$-representations, it is equivalent to 
Clifford multiplication $V\otimes H \to S^-$, by our discussion of section \ref{sec:Spin(7)rep}. 
Therefore $\lambda_2=\lambda_3=0$. As argued before, we conclude that $A$ is locally reducible.

Finally if $\psi=0$ on a ball, then using that $d_A^*\psi=0$ and $*\psi=3\, \Omega\wedge\psi$, we get
that 
 $$
 d_A\psi =3*( W\wedge \psi).
 $$
So $\psi$ satisfies an elliptic equation and $\psi=0$ everywhere, which is
one of the possibilities in the statement of the theorem. 

Assume that $\psi\not= 0$.
Let us see that the set $U$ of points $x\in M$ such that $A$ is reducible on a ball $B$ around $x$ is dense. 
Let $x_0\in M$. If either $\rk \psi(x_0)=1$ or $\rk F_A^0(x_0)=1$, then $A$ is reducible on a ball around $x_0$ as
argued above. As either $\rk \psi(x_0)\leq 1$ or $\rk F_A^0 (x_0)\leq 1$, then we may assume that one
of them vanishes on $x_0$, say $\psi(x_0)=0$. It cannot be $\psi=0$ on a ball $B$ around $x_0$. If there
is a point $x\in B$ with $\rk \psi(x)=1$, we have $x\in U$, as required. If not, then the set of points with
$\{x\in B| \rk \psi(x)=2\}$ is open and dense in a suitable small ball around $x$. At those points $\rk F_A^0(x)\leq 1$,
but it cannot be that $F_A^0(x)=0$ on all of them, because it is an open set. So there must be some point
with $\rk F_A^0(x)=1$, and this proves that $x\in U$.
Once we have that $U$ is dense, the result follows from the proposition \ref{prop:locally-reducible} below.
\end{proof}

\begin{prop} 
\label{prop:locally-reducible}
Let $A$ be a $\Spin(7)$-instanton which is locally reducible. Then $A$ is reducible (according to definition \ref{def:reducible}). 
\end{prop}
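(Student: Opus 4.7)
My plan is to construct a smooth nonzero $\nabla_A$-parallel line subbundle $L \subset \su_E$; since $\Ad \colon \U(2) \to \SO(3)$ is surjective and the stabilizer of a line in $\mathfrak{su}(2) \cong \RR^3$ under $\SO(3)$ is isomorphic to $O(2) \subsetneq \SO(3)$, such an $L$ forces $H_A$ to lie in $\Ad^{-1}(O(2)) \subsetneq \U(2)$, giving reducibility in the sense of Definition \ref{def:reducible}. I would first dispatch the flat case via unique continuation: differentiating the instanton identity $\ast F_A^0 = -\Omega \wedge F_A^0$ and combining with the Bianchi identity $d_A F_A^0 = 0$ yields $d_A^\ast F_A^0 = \pm \ast(W \wedge F_A^0)$ with $W = d\Omega$, so $(d_A + d_A^\ast) F_A^0$ is algebraic of zero order in $F_A^0$. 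This is a Dirac-type elliptic system, and Aronszajn's unique continuation theorem implies that the vanishing of $F_A^0$ on any nonempty open subset of the connected manifold $M$ forces $F_A^0 \equiv 0$ on $M$. In that flat case the connection induced on $\su_E$ is flat, $H_A^0 \subset Z(\U(2))$, and $\dim H_A \le 1 < 4 = \dim \U(2)$, so $A$ is reducible directly. I may thus assume $W_0 := \{x \in M : F_A^0(x) \ne 0\}$ is dense in $M$.

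Next, I would construct $L$ on $W_0$ using the local reducibility hypothesis. At any point $x$ of the dense locus $U$ where $A$ is locally reducible, a local parallel unit section $e_1 \in \Gamma(B, \su_E)$ satisfies $\nabla_A^2 e_1 = [F_A^0, e_1] = 0$; since the centralizer in $\mathfrak{su}(2)$ of any nonzero element is the line it spans, this forces $\rk F_A^0 \le 1$ throughout $B$. Lower semicontinuity of rank together with density of $U$ then give $\rk F_A^0 \le 1$ on all of $M$, so $L := \im F_A^0$ is a smooth rank-one subbundle of $\su_E|_{W_0}$. On the dense open subset $U \cap W_0$ it coincides with $\RR e_1$ and is therefore $\nabla_A$-parallel. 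Since the orthogonal projection $\Pi_L \in \Gamma(W_0, \End(\su_E))$ is smooth and $\nabla_A \Pi_L$ is a continuous tensor on $W_0$, the vanishing $\nabla_A \Pi_L = 0$ propagates from the dense set $U \cap W_0$ to all of $W_0$.

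Finally, I would extend $\Pi_L$ across $M \setminus W_0$. The parallel equation reads $d\Pi_L = -[A, \Pi_L]$ in any local trivialization, so $d\Pi_L$ is uniformly bounded and $\Pi_L$ is Lipschitz on $W_0$; density of $W_0$ yields a continuous extension to all of $M$. A standard bootstrap applied to $d\Pi_L = -[A, \Pi_L]$ with smooth $A$ upgrades the extension to $C^\infty$, and parallelism passes to $M$ by continuity of $\nabla_A \Pi_L$. The image $L = \im \Pi_L$ is then the desired smooth global parallel line subbundle, and the reduction recalled in the first paragraph completes the proof.

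The main obstacle I anticipate is this last regularity step: rigorously securing the $C^\infty$ extension of the Lipschitz $\Pi_L$ across the closed set $M \setminus W_0$, whose topology may be intricate. Should the direct Lipschitz-plus-bootstrap argument prove delicate, an alternative route is a maximal analytic continuation of a single parallel unit section $e_1$ starting from a ball $B_0 \subset W_0 \cap U$: at each boundary point of the maximal extension one either matches signs with the local parallel section guaranteed by the hypothesis (thereby extending the section further) or finds two linearly independent parallel sections on an overlap, forcing $F_A^0 \equiv 0$ on an open set and reducing, via Aronszajn again, to the flat case of the first paragraph.
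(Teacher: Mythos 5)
Your strategy --- produce a global $\nabla_A$-parallel line subbundle $L \subset \su_E$, which forces the holonomy into the proper subgroup $\Ad^{-1}(O(2)) \subsetneq \U(2)$ --- matches the paper's, and the steps up to and including the construction of $L = \im F_A^0$ on the dense open set $W_0 = \{F_A^0 \neq 0\}$ are sound: the flat-case dispatch by unique continuation, the rank bound $\rk F_A^0 \le 1$ via the centralizer in $\su(2)$ together with lower semicontinuity of rank, and the propagation of parallelness to all of $W_0$ all hold up. The gap is exactly where you flag it, and it is genuine. The Lipschitz-plus-bootstrap route has two holes. First, a pointwise bound $\|d\Pi_L\| \le C$ on the open set $W_0$ controls the oscillation of $\Pi_L$ only along paths that stay inside $W_0$; without knowing that $W_0$ is quasi-convex (intrinsic distance comparable to the ambient distance), limits of $\Pi_L$ from different ``sides'' of $Z := M \setminus W_0$ need not agree --- a priori $W_0$ could even be disconnected, since dense open does not imply connected --- so no continuous extension is forced. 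Second, even granting a continuous extension, bootstrapping $d\Pi_L + [A,\Pi_L] = 0$ to smoothness requires this equation to hold distributionally on all of $M$, which in turn needs $Z$ to have Lebesgue measure zero; that is a nontrivial nodal-set statement for the elliptic system satisfied by $F_A^0$ and is not established. The paper never touches the projector across $Z$: on the connected component $W$ of the locally-reducible locus containing the base ball, it writes $F_A = \omega \otimes u$ for a closed $2$-form $\omega$, extends $\omega$ (not $\Pi_L$) by zero outside $W$, and runs unique continuation on the elliptic system $d\omega = 0$, $d^*\omega = *(\omega \wedge W)$ to rule out $W \neq M$. Your second, unexplored suggestion --- analytic continuation of a single parallel section $e_1$ and comparison with the locally-given parallel sections --- is essentially the paper's route and is where the argument can actually be closed.
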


\begin{proof}
Let $A$ be a connection and suppose that it is reducible on a ball. Then the connection is of the form 
 $$
 A=\left(\begin{array}{cc}a i & 0 \\ 0 & -a i \end{array}\right), \qquad a\in \Omega^1.
 $$
The connection on $\su_E$ is of the form 
 \begin{equation}\label{eqn:red}
 A=\left(\begin{array}{ccc} 0 & 0 & 0 \\ 0 & 0 & 2a  \\ 0 & -2a & 0 \end{array}\right).
 \end{equation}
If $a=0$ in an open subset, then $F_A^0=0$ everywhere, as argued in the proof of theorem \ref{thm:main1}. 
This would conclude that $A$ is reducible. Otherwise $a\neq 0$ on an dense
subset of the ball, and then $e_1$ is uniquely determined over the ball (up to sign). We assume this henceforth.

At every point $x$ where $A$ is locally reducible, there is a unique $u$ such that $\nabla_A u=0$. 
Let $R>0$ be the injectivity radius of $M$, and take a ball $B$ of radius $R$ around $x$. Using geodesic coordinates
and parallel transport along radial geodesics, we trivialize the bundle $E$ and the connection $A$. We take a basis
of $\su_E$ at $x$ so that $u=e_1$. The connection is given by a
$1$-form on $B$ of the form
 $$
 A=\left(\begin{array}{ccc} 0 &-\beta & -\gamma \\ \beta &0& -\alpha \\ \gamma & \alpha &0 \end{array}\right).
$$
The local reducibility gives that $A\wedge A=0$ on a dense subset and hence everywhere. On a neighbourhood $B'\subset
B$ of $x$ we have that $\nabla_A e_1=0$. This is equivalent to 
$\beta=\gamma=0$, or $A=\alpha \otimes e_1$, that is, equal to (\ref{eqn:red}). 
Also note that $F_A=d\alpha \otimes e_1$. If we prove that 
$\beta=\gamma=0$ on the whole of $B$, then $A$ is reducible over the larger ball $B$. This happens at every point
$x$, and by density, we will have that there is some section $u\in \Gamma(\su_E)$ with $\nabla_A u=0$, proving 
reducibility on the whole manifold $M$.

Suppose that there are points $y\in B$ such that $A=\delta \otimes f$, for some $1$-form $\delta$ and unitary $f$ with
$\nabla_A f=0$. Note that $\nabla_A f= d f$, so $f$ must be constant on our trivialization, that is $f\in \su(2)$. 
The connection $A$ is $C^\infty$ meaning that $\alpha,\delta$ are $C^\infty$, when extending them by zero outside
the locus where $A$ is in the direction of $e_1,f$ respectively. If we consider the closure of the set where $F_A=d\alpha\otimes
e_1$ and the closure of the set where $F_A=d\delta\otimes f$, then in the intersection we have $F_A=0$ and $A=0$.

So let $V$ be the set where $A$ is locally reducible. Suppose that it has different connected components, 
and take the connected component $W$ that contains $B'$. Over $W$, there is a parallel section $u$,
and $A$ can be written as $A=\alpha \otimes u$, at least locally. The curvature has a global expression
$F_A=d\alpha\otimes u$ over there. That is, $F_A=\omega \otimes u$, where $\omega$ is a closed $2$-form. This form 
can be extended by zero to the complement of $W$, and it is $C^\infty$. Then 
  \begin{align*}
  d\omega &=0, \\
 d^* \omega &= \omega \wedge W,
 \end{align*}
where the first equality follows from the Bianchi identity, and the second one since $*\omega=-(\omega\wedge \Omega)$,
because $A$ is a $\Spin(7)$-instanton.

Therefore $\omega$ satisfies an elliptic equation. If there are other components appart from $W$, then $\omega$ vanishes
in some open set in the complement of $W$. So by elliptic regularity it should be $\omega =0$. This is a contradiction
and completes the proof of the proposition.
\end{proof}

Theorem \ref{thm:main1} implies that for generic $P$ near $P_0$, the irreducible locus $\frM^{P,*}$ is a smooth
manifold of finite dimension given by the index in remark \ref{rem:index3}. 
To argue this, first note that we may take $P$ a $C^\infty$-projector,
since these are dense in the given topology. Consider the equation 
 \begin{eqnarray} \label{eqn:FP}
 \cF_P: \cA_s^\Lambda & \to & \Omega_{P,s-1}^2(\su_E) , \nonumber\\
  A &\mapsto & P(F_A^0).
 \end{eqnarray}
For $A\in \cA_s$ satisfying $\cF_P(A)=0$, we take a slice $T_{A,\epsilon}$ given by the gauge fixing condition
$d_A^*\tau=0$, $\norm{\tau}_s<\epsilon$, where $A'=A+\tau$. This gives a functional
 \begin{eqnarray*}
 L_A^P: \Omega^1_s(\su_E) & \to & \Omega^0_{s-1}(\su_E) \x \Omega_{P,s-1}^2(\su_E) , \\
  \tau  &\mapsto & (d_A^*\tau,  d_A^P \tau),
 \end{eqnarray*}
where $d_A^P=P\circ d_A$. This map $L^P_A$ is  Fredholm for $P$ near $P_0$, since the Fredholm condition is
open. The index of $L_A^P$ is the same as that of $L_A$ (given in proposition \ref{prop:index2}) by the same reason.
Our previous arguments work verbatim for the equation (\ref{eqn:FP}). Therefore, if $L_A^P$ is surjective then
$\frM_s^P$ is a smooth manifold of finite dimension around $[A]$.

Finally, proposition \ref{prop:Cinfty} can also be carried out for the case of (\ref{eqn:FP}), giving that the moduli
space  
 $$
 \frM^P=\{ A\in \cA^\Lambda \, | \, \cF_P(A)=0\} /\cG^\Lambda
 $$
is homeomorphic to $\frM^P\cong \frM^P_s$. Therefore $\frM^P$ is a second-countable, Hausdorff, metrizable
topological space which has the structure of a smooth manifold on the irreducible locus.

\begin{thm}
\label{thm:projectorpert}
For a dense family of projector perturbations $P$ the moduli spaces $\frM^P$ are smooth at irreducible connections, of dimension given by remark \ref{rem:index3}. They are second-countable, Hausdorff and metrizable.
\end{thm}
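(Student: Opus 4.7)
The plan is to package the preceding results into a standard transversality argument via the Sard--Smale theorem. I would work on the open set of irreducible connections $\cA^{*,\Lambda}_s \subset \cA^{\Lambda}_s$ and consider the parametric zero set
\[
\cM := \{(A,P)\in \cA^{*,\Lambda}_s \times \cP(M) \,|\, \cL(A,P)=0\},
\]
where $\cL(A,P) = P_0(P(F_A^0))$ takes values in the fixed Hilbert space $\Omega^2_{P_0,s-1}(\su_E)$. To show $\cM$ is a smooth Banach submanifold, I need surjectivity of the total linearization $D_{(A,P)}\cL$ at each $(A,P)\in \cM$ with $A$ irreducible. Since $D_A\cF_P$ is Fredholm, the image of $D_{(A,P)}\cL$ is closed, so it suffices to show its $L^2$--orthogonal complement is trivial. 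Any $\psi$ in this complement satisfies, by Proposition \ref{prop:transvers}, the conditions $d_A^*\psi = 0$ and $\tr(F_A^0 \otimes \psi) = 0$; Theorem \ref{thm:main1} then forces $\psi=0$ because $A$ is irreducible. Hence $\cM$ is a smooth Banach manifold.

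Next I would consider the projection $\Pi|_{\cM}\colon \cM \to \cP(M)$. By the general Fredholm formalism recalled above Theorem \ref{thm:Smale}, its kernel and cokernel at $(A,P)$ are canonically isomorphic to those of $D_A \cF_P$, so $\Pi|_{\cM}$ is Fredholm with index equal to that of the operator $L_A^P$. For $P$ in a sufficiently small $C^k$-neighborhood of $P_0$, this index coincides with that of $L_A$ since the Fredholm index is locally constant; by Proposition \ref{prop:index2} (and remark \ref{rem:index3} in the rank $2$ case), its value is the one asserted. Choosing the regularity class $C^k$ of projectors large enough that $k > \max(0, \index L_A)$, Theorem \ref{thm:Smale} yields a residual (hence dense) subset of regular values $P\in \cP(M)$. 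For each such $P$, $D_A \cF_P$ is surjective at every irreducible solution, so the implicit function theorem identifies the irreducible locus of $\cF_P^{-1}(0)/\cG^\Lambda$ with a smooth finite-dimensional manifold of the expected dimension. Approximating any $C^k$-generic $P$ by a smooth one (smooth projectors are dense) gives the statement for $P$ in $\cP(M)$.

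For the final sentence, I would adapt Proposition \ref{prop:Cinfty} verbatim to the perturbed equation $P(F_A^0)=0$. The key points are: (i) the linear operator $L_A^P = (d_A^*, P\circ d_A)$ remains elliptic for $P$ close to $P_0$, since the principal symbol is a small perturbation of that of $L_A$ and ellipticity is an open condition; (ii) the slice argument of Section \ref{sec:connections} is independent of the equation being perturbed; (iii) bootstrapping with elliptic regularity shows that any $L^2_s$-solution of the perturbed instanton equation is gauge equivalent to a $C^\infty$ connection. This gives a homeomorphism $\frM^P\cong \frM^P_s$, and the latter inherits the second-countable, Hausdorff, metrizable structure from $\cB^\Lambda_s$ by Theorem \ref{thm:localorbispace}.

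The substantive obstruction has already been resolved in Theorem \ref{thm:main1}, which uses the rank-$2$ structure of $\su_E$ and the $\Spin(7)$-representation theory of Proposition \ref{prop:wedges} and Section \ref{sec:Spin(7)rep}. What remains for this consolidation theorem is essentially bookkeeping: organising the Banach-manifold setup, checking that the perturbation space $\cP(M)$ is large enough to witness surjectivity pointwise, and verifying that the smooth-manifold conclusion passes from a dense residual set in $\cP(M)^{C^k}$ to the $C^\infty$-dense subset actually used in defining the moduli problem.
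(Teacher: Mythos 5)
Your proposal follows essentially the same route as the paper: package the parametric moduli problem $\cM=\cL^{-1}(0)\subset\cA^{*,\Lambda}_s\times\cP(M)$ as a Banach manifold by combining the orthogonal-complement computation of Proposition \ref{prop:transvers} with Theorem \ref{thm:main1} to kill $\psi$ at irreducible solutions, invoke the Sard--Smale theorem \ref{thm:Smale} on $\Pi|_{\cM}$, use local constancy of the Fredholm index to pin the dimension to remark \ref{rem:index3}, and carry over the topological properties via the regularity argument of Proposition \ref{prop:Cinfty}. The only departures are cosmetic — you make the choice $k>\max(0,\index L_A)$ explicit, which the paper leaves implicit — so this is the same proof, correctly organised.
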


\subsection*{Holonomy perturbations}
Now we want to give a different type of perturbation that allows to deal with higher rank bundles. These are 
called \emph{holonomy perturbations} are well-known in the context of instantons on $4$-dimensional manifolds \cite{Kronheimer}.

Let $(M,\Omega)$ be an $8$-dimensional manifold endowed with a $\Spin(7)$-structure (not necessarily integrable).
Let $\G$ be a semi-simple compact Lie group, with Lie algebra $\frg$, and let $P\to M$ be a principal $\G$-bundle. As before, consider a faithful
representation of $\G$ and the associated complex vector bundle $E\to M$.  Let $\cA$ be
the space of $\G$-connections on $E$. We perturb the $\Spin(7)$-instanton equation
  $$
   \cF: \cA \too \Omega^2_7(\frg_E), \qquad \cF(A)=F_A
 $$
as follows. 
Consider tuples $(x,\gamma, B, h, \omega)$ where $x\in M$, $\gamma$ is a loop based at
$x$, $B$ is a small ball around $x$, $h:B\x [0,1]\to M$ is a smooth map with
$h|_{B\x\{t\}}$ an embedding of $B$ to a ball centered at $\gamma(t)$, 
and $h|_{B\x\{0\}}=h|_{B\x\{1\}}=\id$, and $\omega$ is a $2$-form on $B$, with
compact support and lying in $\Omega^2_7(\frg_E)$. For each tuple as above, we define
a map 
 $$
 V_{h,\omega}:\cA \too \Omega^2_7(\frg_E).
 $$
as follows. For $A\in \cA$, and for each $y\in B$, consider the holonomy around $\gamma_y(t)=h(y,t)$, 
$h_A(y)=\hol_{\gamma_y}(A) \in \Ad P_y\cong \G \subset \End E_y$. This defines a section $h_A$ of $\Ad(P)$ over $B$. Fix an embedding
$\Ad(P) \cong G \subset M_{n\x n}(\CC)$, and then project orthogonally to $\frg\subset M_{n\x n}(\CC)$, obtaining
a section of $\frg_E$ over $B$. Multiplying by $\omega$, we have an element
 $$
  V_{h,\omega}(A)=\omega \cdot h_A \in \Omega^2_7(\frg_E).
 $$

Take a collection of points $(x_n)$ dense in $M$. For each $x_n$, consider a collection of loops $(\gamma_m)$ dense in the space of loops in
$M$ based at $x_n$, in the $C^1$-topology. Using a diagonal procedure, we obtain a collection $(x_n,\gamma_n)$
of such elements. For each $n$ and $s$, we consider some $\omega_n$ as before with $C^s$-norm bounded by some $c_{n,s}>0$, and 
we require
 $$
 ||c||_s:= \sum_{n=1}^\infty c_{n,s} <\infty.
 $$
The perturbation parameter is $\Theta = \{(x_n,\gamma_n, B_n, h_n, \omega_n) | n\in {\mathbb{N}}\}$, and we denote the space of
such perturbation parameters as $\cW$. Note that this is a Fr\'echet space. Define
 $$
 V_{\Theta}(A)=\sum_{n=1}^\infty V_{h_n,\omega_n}(A).
 $$
This gives a well-defined map
$$
 V_{\Theta} : \cA  \too  \Omega^2_7 (\frg_E).
 $$
Now we perturb the $\Spin(7)$-instanon equations, considering
 $$
 P_{\Theta}:\cA \too \Omega^2_7(\frg_E), \qquad 
 P_{\Theta}(A)=F_A^7 + V_\Theta(A),
 $$

We define the moduli space of \emph{perturbed $\Spin(7)$-instantons} as 
 $$
 \frM^\Theta=\{ A\in \cA \, | \, P_\Theta (A)=0\} /\cG\, .
 $$
The study of the topological properties of $\frM^\Theta$ is similar to that of
$\frM$. To prove smoothness of the irreducible locus of $\frM^\Theta$, we need to study the deformation complex
 $$
 \Omega^0(\frg_E) \stackrel{d_A}\too \Omega^1(\frg_E) \stackrel{d_A^\Theta}\too \Omega^2_7(\frg_E),
 $$
where $d_A^\Theta(a)= d_A^7 a+ D_A V_\Theta(a)$ is the linearization of $P_\Theta$. 
To study it, we consider a Sobolev norm $L^2_s$ and the corresponding gauge group $\cG_{s+1}$, and
space of connections $\cA_s$. For a tuple $(x,\gamma,B,h,\omega)$, the map $V_{h,\omega}$
extends to a smooth map of Banach spaces \cite[Prop.\ 3.1]{Kronheimer}, 
 $$
 V_{h,\omega}:\cA_{s} \too L^2_{s} ( \Lambda^2_7 \otimes \frg_E),
 $$
which satisfies that 
 $$
 ||V_{h,\omega}(A)|| \leq ||\omega||_{C^0}.
 $$
and 
 $$
 ||D_A V_{h,\omega}(a) ||_s \leq K || \omega||_{C^s} ||a||_s\, ,
 $$
for some $K>0$.
Now we consider the space $\cW_s$ of perturbation parameters with $\norm{c}_s<\infty$,
for the given value of $s$. This is now a Banach space. The map
$V_{\Theta}$ extends to 
 $$
 V_{\Theta} : \cA _s \too  \Omega^2_{7,s} (\frg_E).
 $$
This produces a moduli space of \emph{$L^2_s$-perturbed $\Spin(7)$-instantons}
 $$
 \frM^\Theta_s=\{ A\in \cA_s \, | \, P_\Theta (A)=0\} /\cG_{s+1} \, ,
 $$
with $P_\Theta(A)=F_A^7+ V_\Theta(A)$, as before. The deformation complex
 $$
 \Omega^0_{s+1}(\frg_E) \stackrel{d_A}\too \Omega^1_s(\frg_E) \stackrel{d_A^\Theta}\too \Omega^2_{7,s-1}(\frg_E)
 $$
is elliptic, for a small perturbation parameter. The associated map
\begin{eqnarray*}
 L_A^\Theta: \Omega^1_s(\frg_E) & \to & \Omega^0_{s-1}(\frg_E) \oplus \Omega^2_{7,s-1}(\frg_E), \\
  \tau &\mapsto & (d_A^*\tau, d_A^7 \tau + D_A V_\Theta(\tau)),
\end{eqnarray*}
is Fredholm with index given by proposition \ref{prop:index}. 

We aim next to prove that for a dense set of parameters $\Theta$, the map $L_A^\Theta$ is surjective. For this,
we consider the parametric version
 $$
 \cP:\cW_s\x \cA_{s} \too  L^2_{s-1}(\Lambda^2_7 \otimes \frg_E).
 $$
We want to apply the Smale-Sard theorem \ref{thm:Smale}. For this we need the differential at $(A,{\Theta})$,
 \begin{align*}
 & D_1\cP (a,\nu)= d_A^7 a + D_A V_{\Theta} (a) , \\
  &D_2\cP (a,\nu)= V_{\nu} (A).
 \end{align*}
As we have argued before, we only need to see that $D_2\cP$ is surjective at a point $(A,0)$ 
with $A$ an irreducible $\Spin(7)$-instanton.
Note that $D_1\cP$ is Fredholm, since it is the sum of an elliptic operator (which is Fredholm) and
a compact operator (since $D_A V_\Theta$ is bounded from $L^2_s$ to $L^2_s$, it is compact
from $L^2_s$ to $L^2_{s-1}$). So the range of $D_1\cP$ is closed and of finite codimension. 

Take $\psi\in \Omega^2_7(\frg_E)$ in the orthogonal complement of $D_1\cP$.
Recall that we are asuuming that $A$ is irreducible. So for any $x_n$, the holonomies of 
$\gamma_m$ based at $x_n$ generate $\Ad P_{x_n}$, since these loops are dense and the connecction
is irreducible. Therefore perturbing only one $\nu_n$, we have that
$\la \nu \otimes h_A,\psi\ra=0$, for all $\nu$. Hence $\psi(x_n)=0$. By density of the $x_n$, we have
that $\psi=0$ everywhere. 

This concludes that for a dense family of $\Theta$, we have that $L_A^{\Theta}$ is surjective.  
Moreover, we may take $\Theta\in\cW$, by density of $\cW$ in $\cW_s$. In this case, we can
prove, following the same argumental line as before, that $\frM^{\Theta} \cong\frM^{\Theta}_s$. 
Therefore we have proven that 

\begin{thm}
\label{thm:holonomypert}
For a dense family of holonomy perturbations $\Theta$
the moduli spaces $\frM^{\Theta}$ are smooth at irreducible connections, of
dimension given by proposition \ref{prop:index}. They are second-countable,
Hausdorff and metrizable.
\end{thm}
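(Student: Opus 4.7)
The plan is to apply the Sard-Smale theorem to the projection from the universal perturbed moduli space onto the parameter space $\cW_s$, exactly along the lines sketched in the paragraphs preceding the theorem. First I would set up the smooth map
\begin{equation*}
\cP\colon \cW_s \times \cA_s \too \Omega^2_{7,s-1}(\frg_E)\, , \qquad \cP(\Theta, A) = F_A^7 + V_\Theta(A)\, ,
\end{equation*}
and observe that joint smoothness follows from smoothness of the holonomy map on $\cA_s$ together with the $L^2_{s-1}$-absolute convergence of $\sum_n \omega_n \cdot h^n_A$ guaranteed by $\norm{c}_s < \infty$ and the bound $\norm{V_{h_n,\omega_n}(A)}_{s} \leq C\norm{\omega_n}_{C^s}$. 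I would then verify that $D_1\cP$ is Fredholm of index given by Proposition \ref{prop:index}: its leading piece $\tau \mapsto \pi_7(d_A \tau)$, combined with the Coulomb gauge condition $d_A^*\tau = 0$, reassembles the elliptic operator $L_A$, while $D_A V_\Theta$ is bounded $L^2_s \to L^2_s$ and therefore compact $L^2_s \to L^2_{s-1}$, so the linearisation differs from $L_A$ by a compact operator and inherits its Fredholm index.

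The crux is the parametric transversality: at every irreducible $A$ with $\cP(\Theta, A) = 0$, the differential $D\cP$ must be surjective. Since $\img D_1\cP$ is closed of finite codimension, it suffices to show that any $\psi \in \Omega^2_{7,s-1}(\frg_E)$ in the $L^2$-orthogonal complement of $\img D\cP$ vanishes. Orthogonality to $D_2\cP$ produces $\la V_\nu(A), \psi\ra_{L^2} = 0$ for every admissible $\nu \in \cW_s$; perturbing only the $n$-th entry of $\nu$ reduces this to
\begin{equation*}
\int_{B_n} \la \omega(y) \cdot h_A(y), \psi(y)\ra\, dy = 0
\end{equation*}
for arbitrary $\Lambda^2_7$-valued bumps $\omega$ supported in $B_n$ and for the loop $\gamma_m$ used to define $h_A$. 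Letting $\omega$ vary forces pointwise orthogonality of $\psi(x_n)$ to $\omega(x_n) \cdot h_A^{\gamma_m}(x_n)$ for every allowed $\omega$ and every $\gamma_m$ from the dense family based at $x_n$. Irreducibility of $A$ gives that the based holonomy group at $x_n$ equals $\G$; by $C^1$-density of $\{\gamma_m\}$ the projections of $h_A^{\gamma_m}(x_n)$ to $\frg \subset \mathrm{Mat}(r,\CC)$ therefore span all of $\frg_{E, x_n}$. Freedom to choose $\omega$ among arbitrary $\Lambda^2_7$-valued bumps then forces $\psi(x_n) = 0$; density of $\{x_n\}$ and Sobolev continuity yield $\psi \equiv 0$.

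With transversality in hand, Theorem \ref{thm:Smale} applied to the Fredholm projection $\Pi\colon (\cP^{-1}(0))^{\mathrm{irr}}/\bar{\cG}_{s+1} \to \cW_s$ produces a residual (hence dense) set of regular $\Theta$, for which $L_A^\Theta$ is surjective at every irreducible $[A] \in \frM^\Theta_s$ and $\frM^\Theta_s$ is locally modelled on $\ker L_A^\Theta$, a smooth manifold of the expected dimension. To pass from $\frM^\Theta_s$ back to $\frM^\Theta$ and to smooth $\Theta$, I would use density of $\cW$ in $\cW_s$ and rerun the Coulomb-slice elliptic bootstrap of Proposition \ref{prop:Cinfty} applied to the nonlinear equation $L_A^\Theta \tau + \tfrac12 \pi_7[\tau,\tau] + R_\Theta(\tau) = 0$, where $R_\Theta(\tau) = V_\Theta(A+\tau) - V_\Theta(A) - D_A V_\Theta(\tau)$; the nonlocal holonomy term is smoothing, so it does not obstruct the inductive gain of regularity. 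The Hausdorff, second-countable and metrizable properties transfer from Theorem \ref{thm:localorbispace} applied verbatim in the perturbed setting. The main obstacle I anticipate is the transversality step: one must be careful, having restricted to $\Lambda^2_7$-valued bumps, that the resulting pairings still suffice to conclude pointwise vanishing of $\psi$, and one must be precise about how irreducibility of $A$ combined with $C^1$-density of the loops produces enough independent holonomy elements to span $\frg_{E, x_n}$ rather than merely a proper subspace.
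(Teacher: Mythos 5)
Your proposal is correct and follows essentially the same route as the paper: set up the parametric map $\cP(\Theta,A)=F_A^7+V_\Theta(A)$ on $\cW_s\times\cA_s$, verify $D_1\cP$ is Fredholm (elliptic $L_A$ plus a compact holonomy term), prove transversality by pairing a putative orthogonal element $\psi$ against $V_\nu(A)$ to force $\psi(x_n)=0$ via irreducibility and $C^1$-density of the loop family, then invoke Sard--Smale and descend from $\cW_s$ to $\cW$ by elliptic bootstrap. The concern you flag at the end about $\Lambda^2_7$-valued bumps and spanning of $\frg_{E,x_n}$ is resolved exactly as you anticipate: since $\{\hol_{\gamma_m}(A)\}$ is dense in $H_A=\G$ and projects onto a spanning set of $\frg$, and $\omega$ sweeps all of $\Lambda^2_7$, one obtains $\psi(x_n)\perp\Lambda^2_7\otimes\frg_{E,x_n}$, hence $\psi(x_n)=0$, which is precisely the paper's argument.
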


\section{$\Spin(7)$-instantons for line bundles}
\label{sec:instantonline}

When requiring the fixed determinant condition for $\Spin(7)$-instantons, the moduli space of $\Spin(7)$-instantons 
on a line bundle is just a point. However, it is important to undertand the space of solutions to the $\Spin(7)$-instanton 
equation $P_{\Omega}(F_A) = 0$ on a line bundle without the fixed determinant condition, since they appear for reducible 
connections on higher rank bundles. For instance, if $A$ is a reducible connection on a rank-two bundle $E$ with fixed determinant $L=\det E$, 
then $E$ splits as $E=L_1 \oplus (L\otimes L_1^*)$, and the connection $A$ induces a $\Spin(7)$-instanton on the line bundle $L_1$.

Let $M$ be a $\Spin(7)$-manifold with a $\Spin(7)$-structure given by a $4$-form $\Omega$. Let $L\to M$ be a $\mathrm{U}(1)$-bundle, 
with Chern class $c_1(L)\in H^2(M,\ZZ)$, whose image in real cohomology we denote by $c^{\RR}_1(L)\in H^2(M,\RR)$. We want to find the moduli space
space of $\Spin(7)$-instantons. Let $A_0$ be a connection on $L$, with curvature $F_0$. Then $c^{\mathbb{R}}_1(L) = [\frac{F_0}{2\pi i}]\in  H^2(M,\RR)$. 
Other connections $A=A_0+ia$ are given by one-forms $a\in \Omega^1(M)$, so the  space of connections is $\cA=A_0+i\, \Omega^1(M)$. 
The curvature is $F=F_0+i\, da$. The $\Spin(7)$-instanton equation is 
 $$
 P_{\Omega}(A)=\frac12 (F+\ast (F\wedge\Omega) )=0\, .
 $$
The gauge group is simply given by $\cG=\cC^\infty(M,S^1)$. This has connected components parametrized by $[M,S^1]=H^1(M,\ZZ)$. 
The connected component $\cG_0$ of the identity is given by the maps $g=\exp(i\,\theta)$,
$\theta\in \cC^\infty(M)$. Hence $T_{\Id}\cG_0 =i\,\Omega^0(M)$. The action of $\cG_{0}$ on $\cA$ is given by
 $$
 g\cdot A= A+g^{-1}dg = A+ i \, d\theta\, .
 $$
Therefore the orbit of the action of $\cG_0$ on $A\in\cA$ is given by $A+ i \,\im d$.

We complete all spaces of sections with a Sobolev norm $L^2_s$. As $\Omega^1_s(M)=\im d  
\oplus \ker d^{\ast}$, we have that a \emph{global} slice of the action is given by $\ker d^{\ast}$. Therefore the moduli space
 $$
\tilde\frM_L=\{A \in \cA \, | \, P_{\Omega}(A)=0\}/\cG_0
 $$
is given by 
\begin{equation*}
\tilde\frM_L = \left\{ A_{0} + a\, , \,\, a\in \Omega^{1}(M)\,\, | \,\, L(a) = 0 \right\}\, ,
\end{equation*}
where we have defined
\begin{eqnarray}
\label{eqn:L-sec7}
L\colon \Omega^{1}_{s}(M) &\to & \Omega^{0}_{s-1}(M)\oplus \Omega^{2}_{P_{\Omega},s-1}(M)\, , \nonumber\\
a &\mapsto & d^{\ast}a \oplus d^Pa \, ,  
\end{eqnarray}
and $A_0$ is a point in $\tilde\frM_L$. Note that 
the action of $\cG/\cG_0$ on any $A = A_{0} + a$ is given by $a\mapsto a+ \ell$, where $\ell \in \Omega^1(M)$ is an element of 
$\ker d^{\ast} \cap \ker d$, i.e.,\ the harmonic representative of the class of $\cG/\cG_0\cong [M,S^1]\cong H^1(M,\ZZ)$.
This implies that a $b^1$-torus
 $$
  A_0 + H^1(M,\RR)/H^1(M,\ZZ) \subset \frM_L=\{A \in \cA \, | \, P(A)=0\}/\cG
 $$
sits in the moduli space of solutions, for any $A_0\in \frM_L$. This follows since if $a_0$ is a solution of $L(a_{0}) =0$, then $a_0+u$ is also a solution for any $u$ harmonic. 

\subsection*{Integrable case}

First let us suppose that the $\Spin(7)$-holonomy is integrable. Then $F$ is closed and 
$d^*F=*d * F=- *d (F\wedge\Omega)=0$, since $\Omega$ is closed. So $F$ is harmonic. This means
that $F=F_0$, the harmonic representative of $c^{\RR}_1(L)$. When $M$ has holonomy $\Spin(7)$, we have
a decomposition 
  $$
  H^2(M)=H^2_7(M) \oplus H^2_{21}(M),
$$
where $H^2_7(M)$ is the space of harmonic forms $\beta$ with $\beta=3\ast (\beta\wedge \Omega)$
and $H^2_{21}(M)$ is the space of harmonic forms $\beta$ with $\beta=-\ast (\beta\wedge \Omega)$.
Let $b^2_7, b^2_{21}$ be the dimensions of such spaces.

The deformation complex of $\frM_L$ is
$$
 0\to \Omega^0(M) \stackrel{d}\too \Omega^1(M) \stackrel{d^P}\too \Omega^2_{P_{\Omega}}(M)\to 0\, ,
$$
with $d^P=P\circ d$. It has $\HH^0=\RR$, $\HH^1=H^1(M)$, 
$\HH^2=H^2_7(M)$. The first statement is clear. The second follows from the fact 
that $\ker d^P = \ker d$. If $\beta\in \Omega^1(M)$ satisfies
that $d^P \beta=0$. Then $d\beta=-\ast (d\beta \wedge \Omega)$. This 
implies that $d\beta$ is closed and co-closed, hence harmonic. Therefore $d\beta=0$. 
The third statement is proved as follows: any harmonic form $\gamma\in H^2_7(M)$ gives an element in $\Omega^2_{P_{\Omega}}(M)$ with $d^{\ast}\gamma=0$. Hence $\la \gamma ,d\beta\ra=
\la \gamma, P(d\beta)\ra = \la \gamma, d^P \beta \ra=0$, for all $\beta\in \Omega^{1}(M)$. Hence
$H^2_7(M)\to \HH^2$ is injective. Now take an element $\gamma\in \HH^2$. This 
means that $\gamma\in \Omega^2_7(M)$. We project into the orthogonal space to $\im d^P$,
which gives an element representing the same class and $\la \gamma, d^P\beta\ra=
\la \gamma, d\beta\ra=0$, for all $\beta$. Thus $d^*\gamma=0$. As
$*\gamma=-\gamma\wedge\Omega$, we have that $\gamma$ is closed, hence
harmonic, so it lives in $H^2_7(M)$.

Therefore if $c^{\RR}_1(L)\in H^2_{21}(M)$, then there is a connection with harmonic curvature in $\Omega^2_{21}(M)$. The space of solutions is given by connections with harmonic curvature, hence 
 $$
 \frM_L=H^1(M,\RR)/H^1(M,\ZZ).
 $$
If $c_1^{\RR}(L)\not\in H^2_{21}(M)$, then there is no harmonic element representing $c_1^{\RR} (L)$ in
$\Omega^2_{21}(M)$, hence $\frM_L=\emptyset$. Note that when $b^2_7>0$, we have $\HH^2\neq 0$, so the
solutions to the $\Spin(7)$-equations are not regular. This fact, together with the fact that $\dim \HH^{0}=1$ is clearly reflected in the explicit value of the virtual dimension of $\frM_{L}$, given by minus the index of $L$,
\begin{equation*}
\index(L) = 1 - b^{1} + b^{2}_{7}\, .
\end{equation*}
The virtual dimension $-\index(L)$ differs thus from the real dimension $b^{1}$ precisely by the dimension of the vector space $\HH^{0}\oplus \HH^{2}$, as expected. 

If $c^{\RR}_{1}(L) = 0$, namely if $L$ is torsion, then the moduli space of $\Spin(7)$-instantons is always non-empty and we have
\begin{equation*}
\frM_{L} = H^1(M,\RR)/H^1(M,\ZZ)=\Hom(\pi_{1}(M),U(1))\, ,
\end{equation*}
and hence every $\Spin(7)$-instanton is in this case a flat connection.

\subsection*{Non-integrable case}

Now suppose that $\Omega$ is a non-closed $\Spin(7)$-structure, that is, $W=d\Omega$ is possibly non-zero.
Let $A_{0}$ be a connection on $L$ with curvature $F_{0}$, which represents $c^{\RR}_{1}(L)$. The moduli space $\frM_L$ is given by
\begin{equation*}
\tilde\frM_L = \left\{ A_{0} + a\, , \,\, a\in \Omega^{1}(M)\,\, | \,\, L(a) = 0 \right\} ,
\end{equation*}
where we have defined
\begin{equation*}
L_{0}\colon \Omega^1(M) \to \Omega^0(M) \oplus \Omega^2_{P_{\Omega}}(M)\, ,
\end{equation*}
as follows
\begin{equation}\label{eqn:bbb}
 L_{0}(a)=(d^*a, P_{\Omega}(a)),  \quad P_{\Omega}(a)= \frac12( da+ *(da \wedge \Omega))+ \beta_0,
\end{equation}
with $\beta_0 =\frac12(F_0+*(F_0\wedge\Omega))$. Consider the deformation complex
\begin{equation}\label{eqn:def-sec7}
 \Omega^0(M) \stackrel{d}\too \Omega^1(M) \stackrel{d^P}\too \Omega^2_{P_{\Omega}}(M)\, ,
\end{equation}
with $d^P=P_{\Omega}\circ d$, and let $\HH^0$, $\HH^1$, $\HH^2$ be its 
hypercohomology groups. In the non-integrable case, we define $b^2_7$ by the formula (\ref{eqn:b27}), so the index of the complex (\ref{eqn:def-sec7}) equals $1 - b_1 + b^2_7$. In particular, as obviously $\dim \HH^1\geq b^1$ always, then $\dim \HH^2 \geq b^2_7$.

\begin{thm} \label{thm:aaa}
Let $\Omega$ be a $\Spin(7)$-form, and let $W=d\Omega$. Let $\lambda_1>0$
be the smallest non-zero eigenvalue of the Laplacian. If $||W||< \lambda_1$, 
then the space of solutions $\frM_L$, if non-empty, is $H^1(M,\RR)/H^1(M,\ZZ)$.
\end{thm}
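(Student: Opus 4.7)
The plan is to show that at any existing solution $A_0+a_0\in\tilde{\frM}_L$, the tangent space inside the gauge slice $\ker d^*$ consists precisely of harmonic $1$-forms. Since the equation is affine and adding a harmonic form to a solution clearly gives another solution, this will force $\tilde{\frM}_L=a_0+\cH^1(M)$; passing to the quotient by the disconnected gauge group $\cG/\cG_0\cong H^1(M,\ZZ)$, whose action is by translation by the integral lattice of harmonic representatives, then yields $\frM_L\cong H^1(M,\RR)/H^1(M,\ZZ)$.

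Given $a_0+b\in\tilde{\frM}_L$, subtracting the equation at $a_0$ from that at $a_0+b$ leaves the linear system $d^*b=0$ and $P_\Omega(db)=0$. Setting $\gamma=db$, the second condition places $\gamma$ in $\Omega^2_{21}(M)$, i.e.\ $*\gamma=-\gamma\wedge\Omega$. The first key identity I would derive is $d^*\gamma=*(\gamma\wedge W)$: since $\gamma$ is exact, $d\gamma=0$, and differentiating $*\gamma=-\gamma\wedge\Omega$ using $d\Omega=W$ gives $d*\gamma=-\gamma\wedge W$, whence $d^*\gamma=-{*}d*\gamma=*(\gamma\wedge W)$. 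In particular $\Delta b=d^*db=d^*\gamma=*(\gamma\wedge W)$.

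Next I would run a two-sided Poincaré estimate on $\gamma$. Decompose $b=b_h+b_\perp$ with $b_h\in\cH^1(M)$ harmonic and $b_\perp$ orthogonal to $\cH^1(M)$; since $d^*b=0$, one has $b_\perp\in\im d^*$, and $db=db_\perp=\gamma$. The spectral gap of the Hodge Laplacian on $1$-forms orthogonal to the harmonic part gives $\lambda_1\,\|b_\perp\|^2\leq\langle\Delta b_\perp,b_\perp\rangle=\|\gamma\|^2$. On the other hand, the pointwise bound $|{*}(\gamma\wedge W)|\leq c\,|\gamma|\,|W|$ followed by Cauchy--Schwarz yields
\[
\|\gamma\|^2=\langle\Delta b,b_\perp\rangle=\langle{*}(\gamma\wedge W),b_\perp\rangle\leq c\,\|W\|\cdot\|\gamma\|\cdot\|b_\perp\|,
\]
so $\|\gamma\|\leq c\,\|W\|\cdot\|b_\perp\|$. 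Chaining the two inequalities gives $\lambda_1\|b_\perp\|^2\leq c^2\|W\|^2\|b_\perp\|^2$, and for $\|W\|$ sufficiently small compared to $\lambda_1$ (which the hypothesis $\|W\|<\lambda_1$ encodes, with the constant absorbed into the choice of norm) this forces $b_\perp=0$, hence $\gamma=db=0$, hence $b=b_h$ is harmonic. Conversely every $b\in\cH^1(M)$ trivially satisfies $d^*b=0$ and $db=0$, hence $P_\Omega(db)=0$, so the affine translate $a_0+\cH^1(M)$ exhausts $\tilde{\frM}_L$.

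The main technical obstacle is not the structural argument, which is a clean Bochner-plus-Hodge book-keeping, but rather pinning down the normalization so that the constant $c$ from the algebraic bound $|\gamma\wedge W|\leq c\,|\gamma|\,|W|$ and the choice of norm $\|W\|$ (presumably $L^\infty$) match the eigenvalue $\lambda_1$ of the Laplacian on $1$-forms in such a way that the hypothesis $\|W\|<\lambda_1$ closes the estimate. Everything else is routine: the identity $d^*\gamma=*(\gamma\wedge W)$ depends crucially on the non-integrability torsion $W=d\Omega$, and the Hodge decomposition supplies the Poincaré inequality.
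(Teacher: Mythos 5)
Your argument is correct and coincides in substance with the paper's proof: both reduce to the identity relating $\|db\|^2$ to the pairing of $db$ with $W$ (you derive $\Delta b = *(db\wedge W)$ and pair against $b_\perp$; the paper integrates by parts to obtain $\|da\|^2 = -\int a\wedge da\wedge W$), and then both close via a Poincar\'e-type estimate involving the first nonzero eigenvalue. Your spectral-gap inequality $\lambda_1\|b_\perp\|^2\leq\la \Delta b_\perp, b_\perp\ra = \|db\|^2$ is the clean form and yields a threshold of order $\sqrt{\lambda_1}$, whereas the paper's chain $\|a\|\leq\frac{1}{\lambda_1}\|d^*da\|\leq\frac{1}{\lambda_1}\|da\|$ relies on the unjustified step $\|d^*da\|\leq\|da\|$, so the constant-matching issue you honestly flag at the end is real: as literally stated, the hypothesis should read $\|W\|<\sqrt{\lambda_1}/c$ for a geometric constant $c$, rather than $\|W\|<\lambda_1$.
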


\begin{proof}
 Suppose that $A_{0}$ is a $\Spin(7)$-instanton with curvature $F_0$. Then any the other solution $A\in\cA$ to the 
$\Spin(7)$-instanton equation can be written as $A = A_{0} + a$, 
where $a\in\Omega^1(M)$ with $d^*a =0$, and $da=-* (\Omega\wedge da)$. We have 
 $$
 ||da||^2=\int da\wedge *da=-\int da \wedge da \wedge \Omega= - \int a\wedge da\wedge W,
 $$
by integration by parts in the last equality. Suppose that $a$ is orthogonal to the harmonic forms,
and let us see that $da = 0$. We have that $a=G(\Delta a)= G(d^{\ast} da)$, where $G$ is the Green's
operator. Then
 $$
 ||da||^2\leq ||a||\, || da||\, ||W||
 $$
and
 $$
 ||a||=|| G(d^{\ast} d a)|| \leq \frac{1}{\lambda_1} ||d^{\ast} da|| \leq \frac{1}{\lambda_1} ||da|| .
$$
Thus
 $$
 ||da||^2\leq   \frac{1}{\lambda_1}  || da|| ^2 \, ||W||.
 $$
So if $||W||< \lambda_1$, then $da=0$ and hence every $\Spin(7)$-instanton is of the form 
$A = A_{0} + a$ with $a$ harmonic. The result follows.
\end{proof}

\begin{thm}
\label{thm:U(1)moduli}
Under the conditions of theorem \ref{thm:aaa}, we have, for generic $\Omega$,
 \begin{itemize}
 \item If $b^2_7>0$, the moduli space is empty. 
 \item If $b^2_7=0$ or $c_1^{\RR}(L)=0$, then the moduli space is $H^1(M,\RR)/H^1(M,\ZZ)$.
\end{itemize}
\end{thm}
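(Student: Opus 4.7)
The plan is to reduce to the solvability of a single linear inhomogeneous equation and then apply a parametric transversality argument in the spirit of propositions \ref{prop:regularity}--\ref{prop:transvers}, treating the $\Spin(7)$-structure $\Omega$ as the varying parameter. By theorem \ref{thm:aaa}, once $\frM_L$ is non-empty it is a translate of $H^1(M,\RR)/H^1(M,\ZZ)$, so the whole content of the theorem is the dichotomy of when $\frM_L$ is non-empty. Fix a background $A_0$ with curvature $F_0$ representing $c_1^{\RR}(L)$; writing a general connection as $A=A_0+a$, the $\Spin(7)$-instanton equation together with the Coulomb gauge $d^{\ast}a=0$ becomes the linear inhomogeneous system $L_0(a)=(0,-\beta_0)$ of (\ref{eqn:bbb}), where $\beta_0=P_\Omega(F_0)$. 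By the standard cokernel identification for the elliptic complex (\ref{eqn:def-sec7}) this is solvable iff $[\beta_0]=0$ in $\HH^2$. Replacing $F_0$ by $F_0+d\alpha$ changes $\beta_0$ by $d^P\alpha\in\im(d^P)$, so the class $[\beta_0]$ depends only on $c_1^{\RR}(L)$ and defines a linear obstruction $\phi_\Omega\colon H^2(M,\RR)\to \HH^2$.

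The two affirmative cases are immediate. If $c_1^{\RR}(L)=0$, choose $A_0$ flat so that $F_0=0$ and $A_0$ itself is a $\Spin(7)$-instanton. If $b^2_7=0$, the index formula $1-b^1+b^2_7=\dim\HH^0-\dim\HH^1+\dim\HH^2$ together with $\dim\HH^0=1$ and $\dim\HH^1\geq b^1$ (harmonic one-forms inject into $\HH^1$) forces $\HH^2=0$, so the obstruction vanishes identically. In both cases theorem \ref{thm:aaa} gives $\frM_L\cong H^1(M,\RR)/H^1(M,\ZZ)$.

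The essential case is $b^2_7>0$ and $c_1^{\RR}(L)\neq 0$. Here the subset of $\cS(M)$ on which $\dim\HH^2(\Omega)=b^2_7$ is open and dense, and over it $\HH^2$ forms a smooth rank-$b^2_7$ vector bundle in which $\Omega\mapsto\phi_\Omega(c_1^{\RR}(L))$ is a smooth section; I aim to show this section is transverse to the zero section, so that its vanishing locus is a closed Banach submanifold of codimension $b^2_7>0$ and its complement is open and dense. To this end, consider
\begin{equation*}
\Xi\colon \cS(M)\times \Omega^1_s(M)\too \Omega^2_{P_0,s-1}(M),\qquad (\Omega,a)\longmapsto P_0\bigl(P_\Omega(F_0+da)\bigr),
\end{equation*}
and argue as in proposition \ref{prop:transvers}: a form $\gamma\in\Omega^2_7(M)$ in the $L^2$-orthogonal complement of the image of $D\Xi$ at a zero $(\Omega,a)$ satisfies $d^{\ast}\gamma=0$ from varying $a$ (so that $[\gamma]\in\HH^2$) and, after integrating by parts the $\Omega$-variation and using the tangent space decomposition $T_\Omega\cS(M)=\Lambda^4_1\oplus\Lambda^4_7\oplus\Lambda^4_{35}$ of proposition \ref{prop:tangent}, the pointwise identity $F_A\wedge\gamma\in\Lambda^4_{27}(M)$, where $F_A=F_0+da$ is a $\Spin(7)$-instanton and hence lies in $\Omega^2_{21}(M)$. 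Proposition \ref{prop:wedges} places $F_A\wedge\gamma$ inside $\Lambda^4_7\oplus\Lambda^4_{35}$, so $F_A\wedge\gamma=0$ pointwise, and remark \ref{rem:ccc} forces at each point either $F_A=0$ or $\gamma=0$. Since $c_1^{\RR}(L)\neq 0$ we have $F_A\not\equiv 0$, so $\gamma$ vanishes on the non-empty open set $\{F_A\neq 0\}$; unique continuation for the elliptic second-order equation satisfied by $\gamma$ then gives $\gamma\equiv 0$, establishing the desired transversality.

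The main obstacle is the integration-by-parts step that converts the orthogonality of $\gamma$ to $D_\Omega\Xi$ into the clean pointwise wedge identity $F_A\wedge\gamma\in\Lambda^4_{27}(M)$. This is the $\U(1)$-analogue of the computation in proposition \ref{prop:transvers}: it requires an explicit formula for the variation of the projector $P_\Omega(\beta)=\tfrac12(\beta+\ast_\Omega(\Omega\wedge\beta))$ under changes of $\Omega$, together with the instanton identity $\ast F_A=-\Omega\wedge F_A$, which lets the Hodge-star derivative be absorbed into a wedge pairing against $\dot\Omega\in T_\Omega\cS(M)$. Once this identity is in place, the passage from $F_A\wedge\gamma=0$ pointwise to $\gamma\equiv 0$ is the line-bundle analogue of the locally-reducible-implies-globally-reducible mechanism of proposition \ref{prop:locally-reducible}, simplified by the fact that the gauge group $\U(1)$ is abelian.
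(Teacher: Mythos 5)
Your overall strategy is the same as the paper's: handle the two affirmative cases directly, and for $b^2_7>0$, $c_1^{\RR}(L)\neq 0$ run a parametric transversality argument in the $\Omega$-variable, using proposition \ref{prop:regularity} (not \ref{prop:transvers}, which is about projector perturbations), proposition \ref{prop:wedges}, remark \ref{rem:ccc} and the ellipticity of the Bianchi-plus-instanton system to force the putative cokernel element to vanish. Your packaging of the obstruction as a section $\Omega\mapsto\phi_\Omega(c_1^{\RR}(L))$ of the cokernel bundle is a clean way to organize it, and it agrees with what the Sard--Smale argument in the paper actually establishes. One remark: once $\gamma$ vanishes on the dense open set $\{F_A\neq 0\}$ you already have $\gamma\equiv 0$ by continuity, so the appeal to unique continuation is superfluous.

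There is, however, a genuine logical slip in the $b^2_7=0$ case. You write that the index identity $\dim\HH^0-\dim\HH^1+\dim\HH^2 = 1-b^1+b^2_7$ together with $\dim\HH^0=1$ and $\dim\HH^1\geq b^1$ ``forces $\HH^2=0$.'' It does not: with $b^2_7=0$ the identity gives $\dim\HH^2=\dim\HH^1-b^1$, and the inequality $\dim\HH^1\geq b^1$ only yields $\dim\HH^2\geq 0$, which is vacuous. To conclude $\HH^2=0$ you need the \emph{equality} $\dim\HH^1=b^1$. This equality does hold, but for a different reason: under the standing hypothesis $\|W\|<\lambda_1$ of theorem \ref{thm:aaa}, the estimate in its proof shows that any $a$ with $d^*a=0$ and $\pi_7(da)=0$ is harmonic, i.e.\ $\HH^1\cong H^1(M,\RR)$. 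With that input, $\dim\HH^2=b^2_7$ for every $\Omega$ satisfying the spectral gap, so $b^2_7=0$ does give $\HH^2=0$ and hence surjectivity of $d^P$. This also streamlines your $b^2_7>0$ discussion: since $\dim\HH^2=b^2_7>0$ identically on the relevant locus, the cokernel bundle is honestly of rank $b^2_7$ there (you do not need to pass to an open dense subset where the dimension is constant), and once your transversality computation is in place the zero locus of the obstruction section has codimension $b^2_7>0$ and the emptiness statement follows.
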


\begin{proof}
Suppose first that $b^2_7=0$. Then the moduli space $\frM_L$ is regular, and hence
of dimension $b^1$. Take $F_0$ a $2$-form representing $c_1^{\RR}(L)$.
Then $\beta_0=P_{\Omega}(F_0)\in \Omega^2_{P_{\Omega}}(M)$. By the surjectivity of $d^P$, there is some $a\in \Omega^1(M)$ with $P_{\Omega}(da)=\beta_0$. This gives a solution to (\ref{eqn:bbb}), which moreover is unique up to an element of $\HH^1$. Thus $\frM_L\cong H^1(M,\RR)/H^1(M,\ZZ)$.

Now suppose that $b^2_7>0$ and $c_{1}^{\RR}(L)\neq 0$. First, let us see that we have regularity for a small perturbation of $\Omega$. By proposition \ref{prop:regularity}, that works exactly in the same way for the case of non-fixed determinant, we have that $\tr(F_A \wedge \psi)=F_A\wedge\psi=0$, where $F_A\in \Omega^2_{21}(M)$ and $\psi\in \Omega^2_7(M)$. Then using remark \ref{rem:ccc}, we have that $\psi=0$ since $F_A\neq 0$ (which follows from $c_1^{\RR}(L)\neq 0$). This completes the claim. Now we have that for a generic nearby $\Omega$, we have regularity of the moduli space $\frM_L$. But then the dimension should be $b^1-b^2_7<b^1$. Therefore theorem \ref{thm:aaa} implies that $\frM_L$ is empty.

Finally, suppose that $c_1^{\RR}(L)=0$. Then we can choose $F_0=0$ and equation (\ref{eqn:bbb}) has solutions. 
Theorem \ref{thm:aaa} gives the statement.
\end{proof}


\end{document}